\journalname{}
\newcommand{\vecx}{\mathbf{x}}
\newcommand{\vecr}{\mathbf{r}}
\newcommand{\vecu}{\mathbf{u}}
\newcommand{\vecv}{\mathbf{v}}
\newcommand{\vecy}{\mathbf{y}}
\newcommand{\vecz}{\mathbf{z}}
\newcommand{\mata}{\mathbf{A}}
\newcommand{\matb}{\mathbf{B}}
\newcommand{\matc}{\mathbf{C}}
\newcommand{\matd}{\mathbf{D}}
\newcommand{\matf}{\mathbf{F}}
\newcommand{\matx}{\mathbf{X}}
\newcommand{\matu}{\mathbf{U}}
\newcommand{\matv}{\mathbf{V}}
\newcommand{\matA}{\mathbf{A}}
\newcommand{\matB}{\mathbf{B}}
\newcommand{\matE}{\mathbf{E}}
\newcommand{\matF}{\mathbf{F}}
\newcommand{\matG}{\mathbf{G}}
\newcommand{\matI}{\mathbf{I}}
\newcommand{\matJ}{\mathbf{J}}
\newcommand{\matK}{\mathbf{K}}
\newcommand{\matP}{\mathbf{P}}
\newcommand{\matR}{\mathbf{R}}
\newcommand{\matM}{\mathbf{M}}
\newcommand{\matQ}{\mathbf{Q}}
\newcommand{\matX}{\mathbf{X}}
\newcommand{\matU}{\mathbf{U}}
\newcommand{\tensA}{\mathcal{A}}
\newcommand{\tensB}{\mathcal{B}}
\newcommand{\tensC}{\mathcal{C}}
\newcommand{\tensE}{\mathcal{E}}
\newcommand{\tensG}{\mathcal{G}}
\newcommand{\tensM}{\mathcal{M}}
\newcommand{\tensP}{\mathcal{P}}
\newcommand{\tensS}{\mathcal{S}}
\newcommand{\tensT}{\mathcal{T}}
\newcommand{\tensU}{\mathcal{U}}
\newcommand{\tensV}{\mathcal{V}}
\newcommand{\tensY}{\mathcal{Y}}
\newcommand{\tensX}{\mathcal{X}}
\newcommand{\rank}{\mathrm{rank}}
\newcommand{\ranktc}{\mathrm{rank}_{\mathrm{tc}}}
\newcommand{\ranktt}{\mathrm{rank}_{\mathrm{TT}}}
\newcommand{\St}{\mathrm{St}}
\newcommand{\Gr}{\mathrm{Gr}}
\newcommand{\Span}{\mathrm{span}}
\newcommand{\grad}{\mathrm{grad}}
\newcommand{\subjectto}{\mathrm{s.\,t.}}
\newcommand{\T}{\mathsf{T}}
\newcommand{\GL}{\mathrm{GL}}
\newcommand{\barxi}{{\bar{\xi}}}
\newcommand{\bareta}{{\bar{\eta}}}
\newcommand{\frob}{\mathrm{F}}
\newcommand{\leftunfolding}{\mathrm{L}}
\newcommand{\rightunfolding}{\mathrm{R}}
\newcommand{\PGD}{$\mathrm{PGD}$}
\newcommand{\PtwoGD}{$\mathrm{P^2GD}$}
\newcommand{\PtwoGDR}{$\mathrm{P^2GDR}$}
\newcommand{\RFD}{$\mathrm{RFD}$}
\newcommand{\RFDR}{$\mathrm{RFDR}$}
\DeclareMathOperator{\ten}{ten}
\DeclareMathOperator{\tangent}{T}
\DeclareMathOperator{\vertical}{V}
\DeclareMathOperator{\horizontal}{H}
\DeclareMathOperator{\retr}{R}
\DeclareMathOperator{\Hess}{Hess}
\DeclareMathOperator{\sym}{sym}
\DeclareMathOperator{\Sym}{Sym}
\DeclareMathOperator{\proj}{P}
\DeclareMathOperator*{\argmin}{arg\,min}
\begin{document}

\title{Desingularization of bounded-rank tensor sets\thanks{This work was supported by the National Key R\&D Program of China (grant 2023YFA1009300). BG was supported by the Young Elite Scientist Sponsorship Program by CAST. YY was supported by the National Natural Science Foundation of China (grant No. 12288201).}
}
% \subtitle{Do you have a subtitle?\\ If so, write it here}

\titlerunning{Desingularization of tensor varieties}        % if too long for running head

\author{Bin Gao \and Renfeng Peng \and Ya-xiang Yuan}

%\authorrunning{Short form of author list} % if too long for running head

\institute{Bin Gao \and Ya-xiang Yuan \at
    State Key Laboratory of Scientific and Engineering Computing, Academy of Mathematics and Systems Science, Chinese Academy of Sciences, Beijing, China \\
              \email{\{gaobin,yyx\}@lsec.cc.ac.cn; }
           \and
           Renfeng Peng \at
           State Key Laboratory of Scientific and Engineering Computing, Academy of Mathematics and Systems Science, Chinese Academy of Sciences, and University of Chinese Academy of Sciences, Beijing, China\\
           \email{pengrenfeng@lsec.cc.ac.cn} 
}

\date{Received: date / Accepted: date}
% The correct dates will be entered by the editor

\maketitle

\begin{abstract} 
    Low-rank tensors appear to be prosperous in many applications. However, the sets of bounded-rank tensors are non-smooth and non-convex algebraic varieties, rendering the low-rank optimization problems to be challenging. To this end, we delve into the geometry of bounded-rank tensor sets, including Tucker and tensor train formats. We propose a desingularization approach for bounded-rank tensor sets by introducing slack variables, resulting in a low-dimensional smooth manifold embedded in a higher-dimensional space while preserving the structure of low-rank tensor formats. Subsequently, optimization on tensor varieties can be reformulated to optimization on smooth manifolds, where the methods and convergence are well explored. We reveal the relationship between the landscape of optimization on varieties and that of optimization on manifolds. Numerical experiments on tensor completion illustrate that the proposed methods are in favor of others under different rank parameters.
\keywords{Low-rank optimization \and Tucker decomposition \and tensor train decomposition \and algebraic variety \and desingularization}
\PACS{15A69 \and 65K05 \and 65F30 \and 90C30}
% \subclass{MSC code1 \and MSC code2 \and more}
\end{abstract}

\section{Introduction}
Low-rank tensor decompositions are powerful for representing multi-dimensional data, enabling the extraction of essential information from data tensors while significantly reducing storage requirements. The canonical polyadic decomposition~\cite{hitchcock1928multiple}, Tucker decomposition~\cite{tucker1964extension}, tensor train decomposition~\cite{oseledets2011tensor} and tensor ring decomposition~\cite{zhao2016tensor} are among the most typical decomposition formats; see~\cite{kolda2009tensor} for an overview. Matrix and tensor optimization leveraging low-rank decompositions appears to be effective in various applications, including image processing~\cite{vasilescu2003multilinear}, matrix and tensor completion~\cite{vandereycken2013low,kasai2016low,gao2023low,gao2024riemannian}, semidefinite programming~\cite{burer2003nonlinear,tang2023solving}, high-dimensional partial differential equations~\cite{bachmayr2016tensor,bachmayr2023low}, and dynamical tensor approximation~\cite{koch2010dynamical,bachmayr2023dynamical}.

In this paper, we are concerned with minimizing smooth functions on bounded-rank tensor sets. For Tucker decomposition, we consider the following optimization problem where the search space consists of tensors with bounded Tucker rank, i.e.,
\begin{equation}
    \begin{aligned}
        \min_{\tensX}\ &\ \ \ \ \ \ f(\tensX) \\
        \subjectto\ &\quad \tensX\in\mathbb{R}^{n_1\times n_2\times\cdots\times n_d}_{\leq\vecr}:=\{\tensX\in\mathbb{R}^{n_1\times n_2\times\cdots\times n_d}:\ranktc(\tensX)\leq\vecr\},
    \end{aligned}
    \label{eq: problem (P)}
    \tag{P}
\end{equation}
where $f:\mathbb{R}^{n_1\times n_2\times\cdots\times n_d}\to\mathbb{R}$ is a smooth function, $\vecr=(r_1,r_2,\dots,r_d)$ is an array of $d$ positive integers, and $\ranktc(\tensX)$ denotes the Tucker rank of $\tensX$. The set $\mathbb{R}^{n_1\times n_2\times\cdots\times n_d}_{\leq\vecr}$ is referred to as the Tucker tensor variety~\cite{gao2023low}. For tensor train (TT) decomposition, the optimization problem can be formulated on TT varieties~\cite{kutschan2018tangent} in a same fashion.

\paragraph{Related work and motivation}
We start with an overview of the existing research in the field of low-rank matrix optimization on $\mathbb{R}_{\leq r}^{m\times n}:=\{\matx\in\mathbb{R}^{m\times n}:\rank(\matx)\leq r\}$, which inspires the low-rank tensor optimization methods. In general, there are mainly three different geometric approaches for solving the low-rank matrix optimization problems.

The first approach is to minimize $f$ on the smooth manifold $\mathbb{R}_{r}^{m\times n}:=\{\matx\in\mathbb{R}^{m\times n}:\rank(\matx)=r\}$, ignoring the set of rank-deficient matrices $\mathbb{R}_{\leq r}^{m\times n}\setminus\mathbb{R}_{r}^{m\times n}$. One can adopt the Riemannian optimization methods to minimize $f$ on $\mathbb{R}_{r}^{m\times n}$; see, e.g., \cite{shalit2012online,vandereycken2013low}. Since $\mathbb{R}_{r}^{m\times n}$ is not closed, the classical convergence results established in Riemannian optimization (e.g.,~\cite{boumal2019global}) do not hold if a method converges to points on the boundary $\mathbb{R}_{\leq r}^{m\times n}\setminus\mathbb{R}_{r}^{m\times n}$.

Instead of working on fixed-rank manifolds, one can directly minimize $f$ on $\mathbb{R}_{\leq r}^{m\times n}$. For instance, Jain et al.~\cite{jain2014iterative} proposed the iterative hard thresholding method---also known as the projected gradient descent method (\PGD)---for low-rank matrix regression. Schneider and Uschmajew~\cite{schneider2015convergence} proposed the projected steepest descent method (\PtwoGD) and a retraction-free variant (\RFD) by exploiting the geometry of the matrix variety $\mathbb{R}_{\leq r}^{m\times n}$. Olikier and Absil~\cite{olikier2022apocalypse,olikier2023apocalypse} proposed provable first-order algorithms (\PtwoGDR\ and \RFDR). Furthermore, Olikier et~al.~\cite{olikier2023first} developed a framework for first-order optimization on general stratified sets of matrices, based on \PtwoGDR\ and \RFDR. In addition, Riemannian rank-adaptive methods (e.g.,~\cite{zhou2016riemannian,gao2022riemannian}) on~$\mathbb{R}_{\leq r}^{m\times n}$ are able to adjust the rank of iterates.

The third approach is based on parametrization, which aims to circumvent the non-smoothness of varieties. For instance, one can parametrize the feasible set $\mathbb{R}_{\leq r}^{m\times n}$ by a manifold $\tensM=\mathbb{R}^{m\times r}\times\mathbb{R}^{n\times r}$ along with a smooth mapping $\varphi:\tensM\to\mathbb{R}_{\leq r}^{m\times n}$ satisfying $\varphi(\tensM)=\mathbb{R}_{\leq r}^{m\times n}$. Consequently, minimizing $f$ on $\mathbb{R}_{\leq r}^{m\times n}$ can be reformulated as minimizing $f\circ\varphi$ on the manifold~$\tensM$, which can be solved by Riemannian optimization methods (see, e.g.,~\cite{absil2009optimization,boumal2023intromanifolds} for an overview). Theoretically, if $x\in\tensM$ is a second-order stationary point of $f\circ\varphi$, $\varphi(x)\in\mathbb{R}_{\leq r}^{m\times n}$ is a first-order stationary point of $f$; see~\cite{ha2020equivalence}. In the same spirit, Levin et~al.~\cite{levin2023finding} proposed a Riemannian trust-region method for minimizing $f\circ\varphi$ on the feasible set $\mathbb{R}^{m\times r}\times\mathbb{R}^{n\times r}$, denoted by ``RTR-LR''. More recently, a parametrization of matrix variety $\mathbb{R}^{m\times n}_{\leq r}$ called \emph{desingularization} was considered in~\cite{naldi2015exact,khrulkov2018desingularization,rebjock2024optimization}. This parametrization involves a slack variable~$\matP$ in the Grassmann manifold $\Gr(n-r,n)$ that records the kernel of $\matx$, leading to a smooth manifold 
\[\tensM(\mathbb{R}^{m\times n},r)=\{(\matx,\matP):\matx\matP=0,\matx\in\mathbb{R}^{m\times n},\matP\in\Gr(n-r,n)\}\]
and mapping $\varphi:(\matx,\matP)\mapsto\matx$. Rebjock and Boumal~\cite{rebjock2024optimization} adopted Riemannian optimization methods to this manifold and provided both global and local convergence guarantees.

These approaches have been extensively studied in low-rank matrix optimization. However, due to the intricate geometric structure of low-rank tensors, optimization on tensor sets is much more complicated than the matrix case. Nevertheless, one can still explore low-rank tensor optimization following a similar path.

Low-rank optimization on the set of Tucker tensors can be formulated as an optimization problem on a smoot h manifold of tensors with fixed Tucker rank~\cite{uschmajew2013geometry}, i.e., $\mathbb{R}^{n_1\times n_2\times\cdots\times n_d}_{\vecr}:=\{\tensX\in\mathbb{R}^{n_1\times n_2\times\cdots\times n_d}:\ranktc(\tensX)=\vecr\}$. Geometric methods can be adopted to solve the fixed-rank problem; see~\cite{uschmajew2020geometric} for an overview. For instance, Kressner et al.~\cite{kressner2014low} proposed a Riemannian conjugate gradient method (GeomCG). However, the feasible set $\mathbb{R}^{n_1\times n_2\times\cdots\times n_d}_{\vecr}$ is still not closed. Therefore, one may consider minimizing $f$ on the Tucker tensor variety $\mathbb{R}^{n_1\times n_2\times\cdots\times n_d}_{\leq\vecr}$ (the closure of $\mathbb{R}^{n_1\times n_2\times\cdots\times n_d}_{\vecr}$), which is the optimization problem~\eqref{eq: problem (P)}. Gao~et al.~\cite{gao2023low} developed the tangent cone of the Tucker tensor varieties and proposed the gradient-related approximate projection method (GRAP). One difficulty of optimization on tensor varieties is that the inherent non-smoothness of the varieties hampers the convergence analysis. Specifically, the local convergence results established in optimization on tensor varieties do not hold if an accumulation point is rank-deficient, i.e., a point in $\mathbb{R}^{n_1\times n_2\times\cdots\times n_d}_{\leq\vecr}\setminus\mathbb{R}^{n_1\times n_2\times\cdots\times n_d}_{\vecr}$; see~\cite{schneider2015convergence,gao2023low}. 

Due to the aforementioned difficulty, we resort to the third approach, which aims to solve~\eqref{eq: problem (P)} via a parametrization of Tucker tensor varieties: a smooth manifold $\tensM$ and a smooth mapping $\varphi:\tensM\to\mathbb{R}^{n_1\times n_2\times\cdots\times n_d}$ such that $\varphi(\tensM)=\mathbb{R}^{n_1\times n_2\times\cdots\times n_d}_{\leq\vecr}$. Subsequently, minimizing $f$ on $\mathbb{R}^{n_1\times n_2\times\cdots\times n_d}_{\leq\vecr}$ can be recast to minimizing $g:=f\circ\varphi$ on the smooth manifold~$\tensM$ as follows,
\begin{equation}
    \min_{x\in\tensM} g(x)=f(\varphi(x)).\label{eq: problem (Q)}\tag{Q}
\end{equation}
The smooth manifold structure enables Riemannian optimization methods for~\eqref{eq: problem (Q)}. For instance, a tensor in $\mathbb{R}^{n_1\times n_2\times\cdots\times n_d}_{\leq\vecr}$ can be parametrized by
\[\tensM^\mathrm{Tucker}=\mathbb{R}^{r_1\times r_2\times\cdots\times r_d}\times\mathbb{R}^{n_1\times r_1}\times\mathbb{R}^{n_2\times r_2}\times\cdots\times\mathbb{R}^{n_d\times r_d}\]
via Tucker decomposition; see section~\ref{sec: Preliminaries} for details. Kasai and Mishra~\cite{kasai2016low} considered a parametrization for the fixed-rank manifold $\mathbb{R}^{n_1\times n_2\times\cdots\times n_d}_{\vecr}$ by employing a quotient manifold 
\[\tensM^\mathrm{quotient}=\mathbb{R}^{r_1\times\cdots\times r_d}\times\St(r_1,n_1)\times\cdots\times\St(r_d,n_d)/(\mathcal{O}(r_1)\times\cdots\times\mathcal{O}(r_d)).\]
More recently, the desingularization of bounded-rank matrices provides a smooth parametrization of non-smooth varieties that facilitates the Riemannian optimization methods, motivating us to explore the desingularization tailored for bounded-rank tensors. However, on the one hand, the parametrizations $\tensM^\mathrm{Tucker}$ and $\tensM^\mathrm{quotient}$ only capture information in the parameter space, while overlooking intrinsic structure in the original tensor space. It is challenging to seek a parametrization that the search space both enjoys a manifold structure and preserves the structure of tensor formats. On the other hand, while tensor varieties are closely related to matrix varieties, the desingularization of tensor varieties can not be simply generalized from the desingularization of matrix varieties due to the intricacy of Tucker and TT tensor varieties. Additionally, the essential difference between Tucker and TT tensor varieties underscores the need for distinct desingularization approaches for these varieties.

\paragraph{Contribution}
We propose new parametrizations for tensor varieties. Specifically, since the Tucker tensor varieties can be characterized by the intersection of matrix varieties along each mode, we introduce slack variables $\matP_1,\matP_2,\dots,\matP_d$ and propose to desingularize $\mathbb{R}^{n_1\times n_2\times\cdots\times n_d}_{\leq\vecr}$ along each mode, i.e.,  
\[\tensM(\mathbb{R}^{n_1\times n_2\times\cdots\times n_d},\vecr)=\left\{
\begin{array}{rl}
    (\tensX,\matP_1,\matP_2,\dots,\matP_d):&\tensX\in\mathbb{R}^{n_1\times n_2\times\cdots\times n_d},\\
    &\matP_k\in\Gr(n_k-r_k,n_k),\\
    &\tensX\times_k\matP_k=0\ \text{for}\ k\in[d]
\end{array}
\right\}.\]
We prove that the set $\tensM(\mathbb{R}^{n_1\times n_2\times\cdots\times n_d},\vecr)$ is a smooth manifold and develop the Riemannian geometry of~$\tensM(\mathbb{R}^{n_1\times n_2\times\cdots\times n_d},\vecr)$, paving the way for first- and second-order Riemannian methods. By incorporating the new parametrization, we reformulate the original problem~\eqref{eq: problem (P)} as an optimization problem on a smooth manifold~\eqref{eq: problem (Q)}; see Fig.~\ref{fig: communicative}. 

\begin{figure}[htbp]
    \centering
    \begin{tikzpicture}
        \node (M) at (0,1.6) {$\tensM$};
        \node (Tc) at (0,0) {$\mathbb{R}^{n_1\times n_2\times\cdots\times n_d}_{\leq\vecr}$};
        \node (R) at (2.4,0) {$\mathbb{R}$};
        \node (E) at ($(-4.2,0)+(M)$) {$\mathbb{R}^{n_1\times\cdots\times n_d}\times\Sym(n_1)\times\cdots\times\Sym(n_d)$};

        \draw[->,thick] (M) -- (R);
        \draw[->,thick] (Tc) -- (R);
        \draw[->,thick] (M) -- (Tc);
        \draw[left hook->,thick] (M) -- (E);    

        \node[left] at ($0.5*(M)+0.5*(Tc)$) {$\varphi$};
        \node[below] at ($0.5*(R)+0.5*(Tc)+(0.4,0)$) {$f$};
        \node[right] at ($0.5*(M)+0.5*(R)+(0.1,0.1)$) {$g=f\circ\varphi$};
    \end{tikzpicture}
    \caption{Diagram of optimization via desingularization. $\tensM=\tensM(\mathbb{R}^{n_1\times n_2\times\cdots\times n_d},\vecr)$, $\Sym(n)=\{\mata\in\mathbb{R}^{n\times n}:\mata^\top=\mata\}$}
    \label{fig: communicative}
\end{figure}

We propose Riemannian gradient descent (RGD-desing), Riemannian conjugate gradient (RCG-desing), and Riemannian trust-region (RTR-desing) methods to solve~\eqref{eq: problem (Q)}. A preliminary numerical experiment in Fig.~\ref{fig: 0} illustrates that the proposed desingularization brings a significant improvement among different parametrizations when the rank parameter is over-estimated, even without employing any rank-adaptive strategy, and reveal the underlying low-rank structure of the data tensor.

\begin{figure}[htbp]
    \centering
    \includegraphics[width=\textwidth]{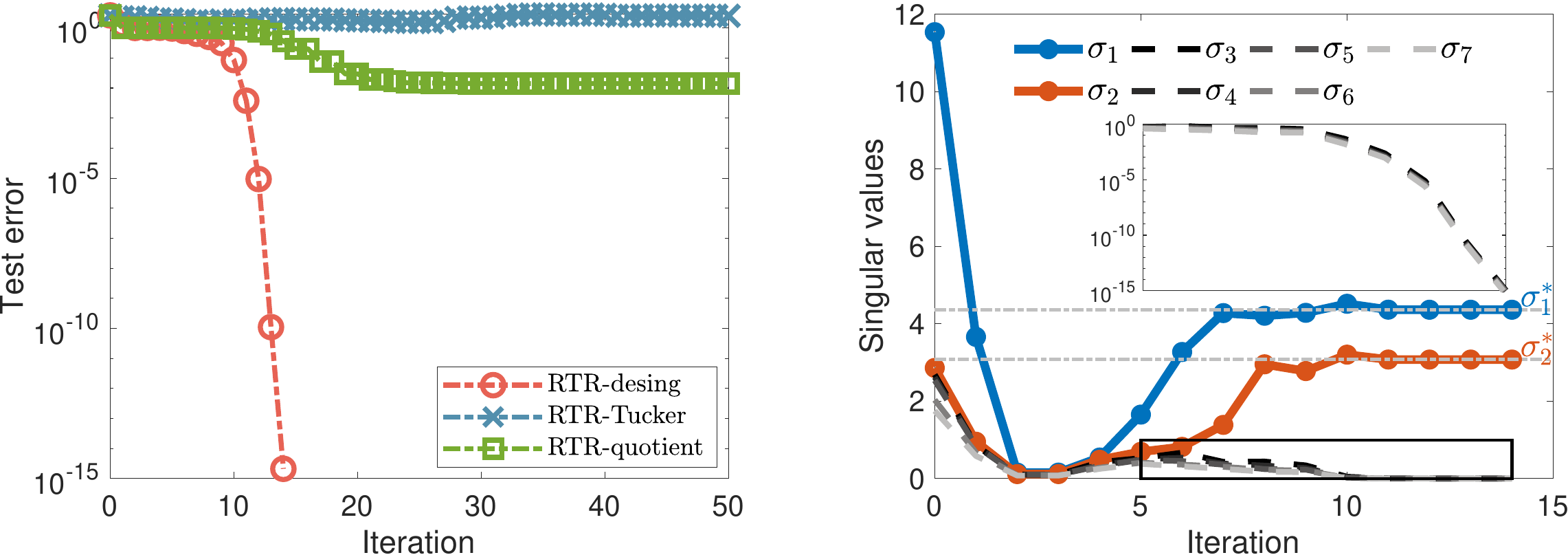}
    \caption{Errors of Riemannian trust-region methods for low-rank Tucker tensor completion under three parametrizations---Tucker parametrization $\tensM^\mathrm{Tucker}$, quotient parametrization $\tensM^\mathrm{quotient}$, and desingularization---with over-estimated rank parameter. Left: test error. Right: history of singular values of mode-1 unfolding matrices in RTR-desing}
    \label{fig: 0}
\end{figure}

Additionally, we investigate the relationship of stationary points between the original problem~\eqref{eq: problem (P)} and the parametrized problem~\eqref{eq: problem (Q)}. We provide a counterexample to demonstrate that even though $x\in\tensM(\mathbb{R}^{n_1\times n_2\times\cdots\times n_d},\vecr)$ is a second-order stationary point of~\eqref{eq: problem (Q)}, $\varphi(x)$ is not guaranteed to be first-order stationary of~\eqref{eq: problem (P)}. Moreover, we establish the connection of the sets of stationary points of~\eqref{eq: problem (Q)} between Tucker parametrization $\tensM^\mathrm{Tucker}$ and desingularization. In summary, the geometric methods in low-rank matrix and tensor optimization are listed in Table~\ref{tab: existing works}.  
\begin{table}[htbp]
    \centering
    \caption{Geometric methods in low-rank optimization. ``param.'' denotes ``parametrization''. We say a method satisfying the property ``Convergence'' if the norm of the projected anti-gradient of $f$ converges to zero. A method satisfies the ``Stationary'' property if any accumulation point is first-order stationary of the bounded-rank problem}
    \label{tab: existing works}
    \begin{tabular}{lllrcc}
        \toprule
        \multicolumn{6}{c}{Low-rank matrix optimization}\\
        \midrule
        Method & & Type & Order & Convergence  & Stationary\\ 
        \midrule
        LRGeomCG~\cite{vandereycken2013low} & & manifold & 1st & \Checkmark & \XSolidBrush\\
        \PGD~\cite{jain2014iterative} & & varieties & 1st & \Checkmark & \Checkmark\\
        \PtwoGD, \RFD~\cite{schneider2015convergence} & & varieties & 1st & \Checkmark & \XSolidBrush\\
        \PtwoGDR~\cite{olikier2022apocalypse} & & varieties & 1st & \Checkmark & \Checkmark\\
        \RFDR~\cite{olikier2023apocalypse} & & varieties & 1st & \Checkmark & \Checkmark\\
        RTR-LR~\cite{levin2023finding} & & param. & 2nd & \Checkmark & \Checkmark\\
        RTR-desing~\cite{rebjock2024optimization} & & param. & 2nd & \Checkmark & \Checkmark\\
        \midrule
        \multicolumn{6}{c}{Low-rank tensor optimization}\\
        \midrule
        Method & Format & Type & Order & Convergence & Stationary\\
        \midrule
        GeomCG~\cite{kressner2014low} & Tucker & manifold & 1st & \Checkmark & \XSolidBrush\\
        RCG~\cite{steinlechner2016riemannian} & TT & manifold & 1st & \Checkmark & \XSolidBrush\\
        RTR~\cite{psenka2020secondorderttrank} & TT & manifold & 2nd & \Checkmark & -\\
        GRAP, rfGRAP~\cite{gao2023low} & Tucker & varieties & 1st & \Checkmark & \XSolidBrush\\
        RCG-quotient~\cite{kasai2016low} & Tucker & param. & 1st & - & \XSolidBrush\\
        RGD-desing (this work) & Tucker & param. & 1st & - & \XSolidBrush\\
        RCG-desing (this work) & Tucker & param. & 1st & - & \XSolidBrush\\
        RTR-desing (this work) & Tucker & param. & 2nd & - & \XSolidBrush\\
        \bottomrule
    \end{tabular}
\end{table}

We compare the proposed RGD-desing and RCG-desing with the existing methods in tensor completion on synthetic and real-world datasets. The numerical results suggest that the proposed methods are favorably comparable to the rank-adaptive method, and perform better than the other methods under different rank parameters. 

For tensor train decomposition, we observe that the geometry of TT varieties is different from that of Tucker tensor varieties. Therefore, we propose a different parametrization by introducing a modified set of slack variables and providing a smooth manifold
\begin{equation*}
    \tensM^\mathrm{TT}=\left\{\begin{array}{rl}
        (\tensX,\matP_1,\matP_2,\dots,\matP_{d-1}):&\tensX=\llbracket\tensU_1,\tensU_2,\dots,\tensU_d\rrbracket,\\
        &\matP_k=\matI_{n_{k+1}\dots n_d}-\matP_{\geq k+1},\ k\in[d-1],\vspace{1mm}\\
        &(\tensU_1,\tensU_2,\dots,\tensU_d)\in \tensS^\mathrm{TT}
    \end{array}\right\}
\end{equation*}
to parametrize TT varieties, where $\tensS^\mathrm{TT}$ is the parameter space of TT tensors with rank constraints. 

\paragraph{Organization}
We introduce the low-rank matrix and tensor varieties in section~\ref{sec: Preliminaries}. In section~\ref{sec: desing}, we propose a new parametrization called desingularization of Tucker tensor varieties, and delve into the Riemannian geometry of the parametrization, recasting the original problem~\eqref{eq: problem (P)} to a parametrized problem. Optimization methods for the parametrized problem and its landscape are developed in section~\ref{sec: optim desing}. We provide a new approach to desingularize tensor train varieties in section~\ref{sec: TT desing}. Section~\ref{sec: experiments} reports the numerical performance of proposed methods in tensor completion. Finally, we draw the conclusion in section~\ref{sec: conclusion}.

\section{Low-rank matrices and tensors}\label{sec: Preliminaries}
In this section, we introduce the preliminaries of matrix manifold, varieties, and desingularization of matrix varieties. Then, we present tensor operations, the definition of Tucker decomposition, and the fixed-rank Tucker manifold and Tucker tensor varieties.

\subsection{Low-rank matrix manifold, varieties, and desingularization}\label{subsec: low-rank matrix}
Let $m,n,r$ be positive integers satisfying $r\leq\min\{m,n\}$. Given a matrix $\matx\in\mathbb{R}^{m\times n}$, the image of $\matx$ and its orthogonal complement are defined by $\Span(\matx):=\{\matx\vecy:\vecy\in\mathbb{R}^n\}\subseteq\mathbb{R}^m$ and $\Span(\matx)^\perp:=\{\vecz\in\mathbb{R}^m:\langle\vecx,\vecz\rangle=0\ \text{for all}\ \vecx\in\Span(\matx)\}$ respectively. The set $\St(r,n):=\{\matx\in\mathbb{R}^{n\times r}:\matx^\T\matx=\matI_r\}$ is the \emph{Stiefel manifold}. The orthogonal group is denoted by $\mathcal{O}(n):=\{\matQ\in\mathbb{R}^{n\times n}:\matQ^\T\matQ=\matQ\matQ^\T=\matI_n\}$. The set of $n$-by-$n$ symmetric matrices is defined by $\Sym(n)=\{\mata\in\mathbb{R}^{n\times n}:\mata^\top=\mata\}$. The symmetric part of a square matrix~$\mata$ is defined by $\sym(\mata):=(\mata+\mata^\top)/2$. The set of $n$-by-$n$ invertible matrices is denoted by $\GL(n)=\{\mata\in\mathbb{R}^{n\times n}:\rank(\mata)=n\}$. Recall that $\mathbb{R}_{r}^{m\times n}:=\{\matx\in\mathbb{R}^{m\times n}:\rank(\matx)=r\}$ is a smooth manifold (see, e.g.,~\cite{helmke1995critical,bruns2006determinantal}), and its closure $\mathbb{R}_{\leq r}^{m\times n}:=\{\matx\in\mathbb{R}^{m\times n}:\rank(\matx)\leq r\}$ is a determinantial variety.

\paragraph{Desingularization of matrix varieties}
In contrast with the fixed-rank manifold, geometry of the determinantial variety $\mathbb{R}_{\leq r}^{m\times n}$ is much more complicated due to the singular points~\cite{lakshmibai2015grassmannian}. Therefore, Khrulkov and Oseledets~\cite{khrulkov2018desingularization} proposed a modified version of Room--Kempf desingularization~\cite{room1938geometry,kempf1973geometry} for matrix varieties. Recently, Rebjock and Boumal~\cite{rebjock2024optimization} further revealed this manifold structure of the desingularization
\begin{equation}
    \label{eq: matrix desing}
    \tensM(\mathbb{R}^{m\times n},r)=\{(\matx,\matP):\matx\matP=0,\matx\in\mathbb{R}^{m\times n},\matP\in\Gr(n-r,n)\},
\end{equation}
where $\Gr(n-r,n):=\{\matP\in\Sym(n),\rank(\matP)=n-r,\matP=\matP^2\}$ is the Grassmann manifold. Note that a point $(\matx,\matP)\in\tensM(\mathbb{R}^{m\times n},r)$ can be represented by the parameters $\matu\in\St(r,m)$, a diagonal matrix $\Sigma\in\mathbb{R}^{r\times r}$, and $\matv\in\St(r,n)$ via $\matx=\matu\Sigma\matv^\top$ and $\matP=\matI_n-\matv\matv^\top$. The tangent space of $\tensM(\mathbb{R}^{m\times n},r)$ at $(\matx,\matP)$ can be characterized by 
    \begin{equation}
        \label{eq: tangent space for matrix desing}
        \tangent_{(\matx,\matP)}\!\tensM(\mathbb{R}^{m\times n},r)=\left\{\begin{array}{l}
            (\matK\matv^\top+\matu\Sigma\matv_p^\top,-\matv_p\matv^\top-\matv\matv_p^\top):\\
            \matK\in\mathbb{R}^{m\times r},
            \matv_p\in\mathbb{R}^{n\times r},
            \matv^\top\matv_p=0
        \end{array} \right\}.
    \end{equation}

\subsection{Tucker decomposition: definition and geometry}
We introduce tensor operations. Denote the index set $\{1,2,\dots,n\}$ by~$[n]$. The inner product between two tensors $\tensX,\tensY\in\mathbb{R}^{n_1\times n_2\times\cdots\times n_d}$ is defined by $\langle\tensX,\tensY\rangle := \sum_{i_1=1}^{n_1} \cdots \sum_{i_d=1}^{n_d} \tensX({i_1,\dots,i_d})\tensY({i_1,\dots,i_d})$. The Frobenius norm of a tensor $\tensX$ is defined by $\|\tensX\|_\mathrm{F}:=\sqrt{\langle\tensX,\tensX\rangle}$. The mode-$k$ unfolding of a tensor $\tensX \in \mathbb{R}^{n_1 \times\cdots\times n_d}$ is denoted by a matrix $\matx_{(k)}\in\mathbb{R}^{n_k\times n_{-k}} $ for $k=1,\dots,d$, where $n_{-k}:=\prod_{i\neq k}n_i$. The $ (i_1,i_2,\dots,i_d)$-th entry of $\tensX$ corresponds to the $(i_k,j)$-th entry of $\matx_{(k)}$, where
$ j = 1 + \sum_{\ell \neq k, \ell = 1}^d(i_\ell-1)J_\ell$ with $J_\ell = \prod_{m = 1,m \neq k}^{\ell-1} n_m$. The tensorization operator maps a matrix $\matx_k\in\mathbb{R}^{n_k\times n_{-k}}$ to a tensor $\ten_{(k)}(\matx_k)\in\mathbb{R}^{n_1\times\cdots\times n_d}$ defined by $\ten_{(k)}(\matx_k)(i_1,\dots,i_d)=\matx_k(i_k,1 + \sum_{\ell \neq k, \ell = 1}^d(i_\ell-1)J_\ell)$ for $(i_1,\dots,i_d)\in[n_1]\times\cdots\times[n_d]$. Note that $\ten_{(k)}(\matx_{(k)})=\tensX$ holds for fixed $n_1,\dots,n_d$. Therefore, the tensorization operator is invertible. 
The $k$-mode product of a tensor $\tensX$ and a matrix $\mata\in\mathbb{R}^{n_k\times M}$ is denoted by $\tensX\times_k\mata\in\mathbb{R}^{n_1\times\cdots\times M\times\cdots\times n_d}$, where the $ (i_1,\dots,i_{k-1},j,i_{k+1},\dots,i_d)$-th entry of $\tensX\times_k\mata$ is $\sum_{i_k=1}^{n_k}x_{i_1\dots i_d}a_{ji_k}$. It holds that $(\tensX\times_k\mata)_{(k)}=\mata\matx_{(k)}$. Given $\vecu_1\in\mathbb{R}^{n_1}\setminus\{0\},\dots,\vecu_d\in\mathbb{R}^{n_d}\setminus\{0\}$, a rank-$1$ tensor of size $n_1\times n_2\times\cdots\times n_d$ is defined by the outer product $\tensV=\vecu_1\circ\vecu_2\circ\cdots\circ\vecu_d$, or $v_{i_1,\dots,i_d}=u_{1,i_1}\cdots u_{d,i_d}$ equivalently. The Kronecker product of two matrices $\mata\in\mathbb{R}^{m_1\times n_1}$ and $\matb\in\mathbb{R}^{m_2\times n_2}$ is an $(m_1m_2)$-by-$(n_1n_2)$ matrix defined by $\mata\otimes\matb:=(a_{ij}\matb)_{ij}$. Given two vectors $\vecx,\vecy\in\mathbb{R}^d$, we denote $\vecx\leq\vecy$ ($\vecx<\vecy$) if $x_i\leq y_i$ ($x_i<y_i$) for all $i\in[d]$.

\begin{definition}[Tucker decomposition]
    Given a tensor $\tensX \in \mathbb{R}^{n_1 \times n_2\times\cdots\times n_d}$, 
    the Tucker decomposition is \[\tensX =\tensG\times_1\matu_1\times_2\matu_2\cdots\times_d\matu_d=\tensG\times_{k=1}^d\matu_k,\]
    where $\tensG\in\mathbb{R}^{r_1 \times r_2\times\cdots\times r_d}$ is a core tensor, $\matu_k\in\St(r_k,n_k)$ are factor matrices with orthogonal columns. The Tucker rank of a tensor $\tensX$ is defined by $$\ranktc(\tensX):=(\rank(\matx_{(1)}),\rank(\matx_{(2)}),\dots,\rank(\matx_{(d)})).$$ 
    We refer to $\tensX=\tensG\times_{k=1}^d\matu_k$ as a \emph{thin} Tucker decomposition if $r_k=\rank(\matx_{(k)})$ for all $k\in[d]$. Figure~\ref{fig: 3D Tucker} depicts the Tucker decomposition of a third-order tensor. 
\end{definition}
\begin{figure}[htbp]
    \centering
    \includegraphics[scale=0.833]{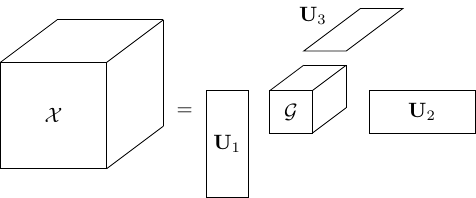}
    \caption{Tucker decomposition of a third-order tensor}
    \label{fig: 3D Tucker}
\end{figure}

The mode-$k$ unfolding of a tensor $\tensX =\tensG\times_1\matu_1\times_2\matu_2\cdots\times_d\matu_d$ satisfies 
\[\matx_{(k)}=\matu_k\matG_{(k)}\left(\matu_d\otimes\cdots\otimes\matu_{k+1}\otimes\matu_{k-1}\otimes\cdots\otimes\matu_{1}\right)^\T=\matu_k\matG_{(k)}((\matu_j)^{\otimes j\neq k})^\T,\]
where $(\matu_j)^{\otimes j\neq k}=\matu_d\otimes\cdots\otimes\matu_{k+1}\otimes\matu_{k-1}\otimes\cdots\otimes\matu_{1}$ for $k\in[d]$. Given a tensor $\tensA\in\mathbb{R}^{n_1\times n_2\times \cdots\times n_d}$, we denote $\tensA\in\bigotimes_{k=1}^d\Span(\matu_k)$ if $\tensA=\tensC\times_{k=1}^d\matu_k$ for some $\tensC\in\mathbb{R}^{r_1\times r_2\times\cdots\times r_d}$.

Given a tuple $\vecr=(r_1,r_2,\dots,r_d)$, the set of tensors of fixed Tucker rank is denoted by $\mathbb{R}^{n_1\times n_2\times \cdots\times n_d}_\vecr:=\{\tensX\in\mathbb{R}^{n_1\times n_2\times\cdots\times n_d}:\ranktc(\tensX)=\vecr\}$, which is a smooth manifold~\cite{koch2010dynamical} of dimension $r_1r_2\cdots r_d+\sum_{k=1}^{d}r_k(n_k-r_k)$. The set of tensors of bounded Tucker rank $\mathbb{R}^{n_1\times n_2\times \cdots\times n_d}_{\leq\vecr}:=\{\tensX\in\mathbb{R}^{n_1\times n_2\times \cdots\times n_d}:\ranktc(\tensX)\leq\vecr\}$ is referred to as the Tucker tensor variety. 
Given a tensor $\tensX\in\mathbb{R}^{n_1\times n_2\times\cdots\times n_d}$ with $\ranktc(\tensX)=\underline{\vecr}\leq\vecr$ and a thin Tucker decomposition $\tensX=\underline{\tensG}\times_{k=1}^d\underline{\matu}_k$, Gao et al.~\cite[Theorem 1]{gao2023low} provided an explicit parametrization 
\begin{equation}
    \label{eq: Tucker tangent cone}
    \begin{aligned}
        \tensV=\tensC\times_{k=1}^d\begin{bmatrix}
            \underline{\matu}_k & \matu_{k,1}
        \end{bmatrix}+\sum_{k=1}^d\underline{\tensG}\times_k(\matu_{k,2}\matR_{k,2})\times_{j\neq k}\underline{\matu}_j
    \end{aligned}
\end{equation}
of an element $\tensV$ in the \emph{tangent cone} $\tangent_\tensX\!\mathbb{R}^{n_1\times n_2\times \cdots\times n_d}_{\leq\vecr}$ at $\tensX$, where $\tensC\in\mathbb{R}^{r_1\times r_2\times\cdots\times r_d}$, $\matR_{k,2}\in\mathbb{R}^{(n_k-r_k)\times \underline{r}_k}$, $\matu_{k,1}\in\St(r_k-\underline{r}_k,n_k)$ and $\matu_{k,2}\in\St(n_k-r_k,n_k)$ are arbitrary that satisfy $[\underline{\matu}_k\ \matu_{k,1}\ \matu_{k,2}]\in\mathcal{O}(n_k)$ for $k\in[d]$.

\section{Desingularization of Tucker tensor varieties}\label{sec: desing}
In this section, we propose a new parametrization of the Tucker tensor varieties by applying desingularization of matrix varieties to each mode, resulting in a smooth manifold $\tensM$. Subsequently, we reveal the Riemannian geometry of $\tensM$ and provide implementation details of the geometric tools to avoid large matrix and tensor formulations. We also illustrate the connection of the proposed desingularization to known geometric results.
\subsection{A new parametrization}\label{subsec: Tucker desing}
In the light of the fact 
\[\mathbb{R}^{n_1\times n_2\times\cdots\times n_d}_{\leq\vecr}=\bigcap_{k=1}^d\ten_{(k)}(\mathbb{R}^{n_k\times n_{-k}}_{\leq r_k})\]
and desingularization of matrix varieties~\eqref{eq: matrix desing}, we propose to desingularize the Tucker tensor varieties along each mode, i.e.,
\begin{equation}
    \label{eq: desingularization of Tucker}
    \tensM(\mathbb{R}^{n_1\times n_2\times\cdots\times n_d},\vecr)=\left\{
    \begin{array}{rl}
        (\tensX,\matP_1,\matP_2,\dots,\matP_d):&\tensX\in\mathbb{R}^{n_1\times n_2\times\cdots\times n_d},\\
        &\matP_k\in\Gr(n_k-r_k,n_k),\\
        &\tensX\times_k\matP_k=0\ \text{for}\ k\in[d]
    \end{array}
    \right\}.
\end{equation}
For the sake of brevity, we denote $\tensM(\mathbb{R}^{n_1\times n_2\times\cdots\times n_d},\vecr)$ by $\tensM$. The following proposition confirms that~\eqref{eq: desingularization of Tucker} is a  parametrization of $\mathbb{R}^{n_1\times n_2\times\cdots\times n_d}_{\leq\vecr}$.

\begin{proposition}
    The Tucker tensor variety $\mathbb{R}^{n_1\times\cdots\times n_d}_{\leq\vecr}$ can be parametrized by~\eqref{eq: desingularization of Tucker} via the mapping 
    \begin{equation*}
        \begin{aligned}
            \varphi:\tensM\to\mathbb{R}^{n_1\times n_2\times\cdots\times n_d}_{\leq\vecr},\qquad
            \varphi(\tensX,\matP_1,\matP_2,\dots,\matP_d)=\tensX,
        \end{aligned}
    \end{equation*}
    i.e., $\varphi(\tensM)=\mathbb{R}^{n_1\times n_2\times\cdots\times n_d}_{\leq\vecr}$.
\end{proposition}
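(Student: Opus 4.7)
The plan is to establish the two set inclusions $\varphi(\tensM)\subseteq\mathbb{R}^{n_1\times\cdots\times n_d}_{\leq\vecr}$ and $\mathbb{R}^{n_1\times\cdots\times n_d}_{\leq\vecr}\subseteq\varphi(\tensM)$ separately. The key observation I rely on is that $\matP_k\in\Gr(n_k-r_k,n_k)$ is, by the definition recalled in section~\ref{subsec: low-rank matrix}, an orthogonal projector of rank exactly $n_k-r_k$, so its kernel is a subspace of $\mathbb{R}^{n_k}$ of dimension $r_k$.

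For the forward inclusion, I would start from an arbitrary $(\tensX,\matP_1,\dots,\matP_d)\in\tensM$ and, for each $k\in[d]$, take the mode-$k$ unfolding of the defining equation $\tensX\times_k\matP_k=0$. Using the identity $(\tensX\times_k\matP_k)_{(k)}=\matP_k\matx_{(k)}$ recalled in section~\ref{sec: Preliminaries}, this gives $\matP_k\matx_{(k)}=0$, i.e., every column of $\matx_{(k)}$ lies in $\ker(\matP_k)$. Since $\dim\ker(\matP_k)=n_k-\rank(\matP_k)=r_k$, it follows that $\rank(\matx_{(k)})\leq r_k$ for all $k$, hence $\ranktc(\tensX)\leq\vecr$ and $\tensX\in\mathbb{R}^{n_1\times\cdots\times n_d}_{\leq\vecr}$.

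For the reverse inclusion, I would take $\tensX\in\mathbb{R}^{n_1\times\cdots\times n_d}_{\leq\vecr}$ with $\ranktc(\tensX)=\underline{\vecr}\leq\vecr$ and choose a thin Tucker decomposition $\tensX=\underline{\tensG}\times_{k=1}^d\underline{\matu}_k$ with $\underline{\matu}_k\in\St(\underline{r}_k,n_k)$. From $\matx_{(k)}=\underline{\matu}_k\,\underline{\matG}_{(k)}\bigl((\underline{\matu}_j)^{\otimes j\neq k}\bigr)^{\!\T}$ one reads off $\Span(\matx_{(k)})\subseteq\Span(\underline{\matu}_k)$, a subspace of dimension $\underline{r}_k\leq r_k$. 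Its orthogonal complement in $\mathbb{R}^{n_k}$ has dimension $n_k-\underline{r}_k\geq n_k-r_k$, so I can pick \emph{any} $(n_k-r_k)$-dimensional subspace $W_k$ of $\Span(\underline{\matu}_k)^{\perp}$ and let $\matP_k$ be the orthogonal projector onto $W_k$. Then $\matP_k\in\Gr(n_k-r_k,n_k)$ and $\matP_k\matx_{(k)}=0$ by construction, so tensorizing back yields $\tensX\times_k\matP_k=0$. This produces a preimage $(\tensX,\matP_1,\dots,\matP_d)\in\tensM$ with $\varphi(\tensX,\matP_1,\dots,\matP_d)=\tensX$.

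There is no serious obstacle here; the only subtlety worth flagging is the rank-deficient case $\underline{\vecr}<\vecr$, where the preimage is not unique because one has genuine freedom in choosing the subspaces $W_k$ inside $\Span(\underline{\matu}_k)^{\perp}$. This non-uniqueness is precisely what reflects the ``desingularizing'' behavior of the construction, and it is harmless for the surjectivity statement being proved, but it anticipates the fiber analysis that will matter later when establishing the manifold structure of $\tensM$.
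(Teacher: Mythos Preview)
Your proof is correct and follows essentially the same route as the paper: both inclusions are argued via the mode-$k$ unfolding identity $(\tensX\times_k\matP_k)_{(k)}=\matP_k\matx_{(k)}$, with the forward direction using $\dim\ker(\matP_k)=r_k$ and the reverse direction building $\matP_k$ from a Tucker decomposition of $\tensX$. The only cosmetic difference is that the paper takes a (possibly non-thin) Tucker decomposition $\tensX=\tensG\times_{k=1}^d\matu_k$ with $\matu_k\in\St(r_k,n_k)$ directly and sets $\matP_k=\matI_{n_k}-\matu_k\matu_k^\top$, whereas you start from a thin decomposition and then explicitly choose an $(n_k-r_k)$-dimensional subspace of $\Span(\underline{\matu}_k)^\perp$; these amount to the same construction, and your remark on non-uniqueness of the fiber in the rank-deficient case is a correct observation.
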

\begin{proof}
    On the one hand, for $\tensX\in\varphi(\tensM)$, there exists $\matP_k\in\Gr(n_k-r_k,n_k)$, such that $\matP_k\matx_{(k)}=(\tensX\times_k\matP_k)_{(k)}=0$. Hence, we have $\rank(\matx_{(k)})\leq n_k-(n_k-r_k)=r_k$, $\ranktc(\tensX)\leq\vecr$ and $\varphi(\tensM)\subseteq\mathbb{R}^{n_1\times n_2\times\cdots\times n_d}_{\leq\vecr}$. On the other hand, any $\tensX\in\mathbb{R}^{n_1\times n_2\times\cdots\times n_d}_{\leq\vecr}$ admits a Tucker decomposition $\tensX=\tensG\times_{k=1}^d\matu_k$ with $\tensG\in\mathbb{R}^{r_1\times r_2\times\cdots\times r_d}$ and $\matu_k\in\St(r_k,n_k)$. Let $\matP_k=\matI_{n_k}-\matu_k^{}\matu_k^\top$, we observe that $\matP_k\matX_{(k)}=\matP_k\matu_k\matG_{(k)}((\matu_j)^{\otimes j\neq k})^\top=0$. Therefore, we obtain a parametrization $(\tensX,\matP_1,\matP_2,\dots,\matP_d)\in\tensM$ and thus $\mathbb{R}^{n_1\times n_2\times\cdots\times n_d}_{\leq\vecr}\subseteq\varphi(\tensM)$. Consequently, it holds that $\varphi(\tensM)=\mathbb{R}^{n_1\times n_2\times\cdots\times n_d}_{\leq\vecr}$. \hfill\squareforqed
\end{proof}

It is worth noting that an element $(\tensX,\matP_1,\matP_2,\dots,\matP_d)$ in $\tensM$ can be represented by parameters $\tensG\in\mathbb{R}^{r_1\times r_2\times\cdots\times r_d}$ and $\matu_k\in\St(r_k,n_k)$ via 
\begin{equation}
    \label{eq: parametrization}\tensX=\tensG\times_{k=1}^d\matu_k\qquad\text{and}\qquad\matP_k=\matI_{n_k}-\matu_k^{}\matu_k^\top,
\end{equation}
which facilitates efficient computations in practice. In contrast with the desingularization of matrix varieties in subsection~\ref{subsec: low-rank matrix}, this parametrization is unique in the sense of $\Span(\matu_k)$. Additionally, in contrast with the parameters for the manifold of fixed-rank Tucker tensors $\mathbb{R}^{n_1\times n_2\times\cdots\times n_d}_{\vecr}$, there is no rank constraint for~$\tensG$.

\subsection{Manifold structure}
We prove that $\tensM$ is an embedded submanifold of
\[\tensE=\mathbb{R}^{n_1\times n_2\times\cdots\times n_d}\times\Sym(n_1)\times\Sym(n_2)\times\cdots\times\Sym(n_d).\] 
Consider two auxiliary sets
\begin{equation*}
    \begin{aligned}
        \tensS&=\{(\tensX,\matv_1,\matv_2,\dots,\matv_d)\in\bar{\tensS}:\tensX\times_k\matv_k^\top=0\ \text{for}\ k\in[d]\},\\
        \bar{\tensS}&=\mathbb{R}^{n_1\times n_2\times\cdots\times n_d}\times\St(n_1-r_1,n_1)\times\St(n_2-r_2,n_2)\times\cdots\times\St(n_d-r_d,n_d).
    \end{aligned}
\end{equation*}
First, we reveal the manifold structure of $\tensS$ by a reformulation. Next, we prove that $\tensM$ and a quotient manifold of $\tensS$ are diffeomorphic.

\begin{lemma}
    Consider the set 
    \[\tilde{\tensS}:=\{(\tensX,\matv_1,\matv_2,\dots,\matv_d)\in\bar{\tensS}:\tensX\times_{j=1}^{k-1}(\matv_j^\perp)^\top\times_k\matv_k^\top=0\ \text{for}\ k\in[d]\},\] 
    where $\matv_k^\perp\in\St(r_k,n_k)$ satisfies $\Span(\matv_k^\perp)=\Span(\matv_k)^\perp$. It holds that $\tensS=\tilde{\tensS}$.
\end{lemma}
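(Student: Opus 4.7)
The plan is to establish both inclusions $\tensS\subseteq\tilde{\tensS}$ and $\tilde{\tensS}\subseteq\tensS$ separately. The essential tools are the commutativity of mode products in distinct modes, i.e., $\tensY\times_i\mata\times_j\matb=\tensY\times_j\matb\times_i\mata$ for $i\neq j$, together with the orthogonal decomposition $\matv_k^{}\matv_k^\top+\matv_k^\perp(\matv_k^\perp)^\top=\matI_{n_k}$, which follows from $[\matv_k\ \matv_k^\perp]\in\mathcal{O}(n_k)$.

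The forward inclusion $\tensS\subseteq\tilde{\tensS}$ is immediate. Given $(\tensX,\matv_1,\ldots,\matv_d)\in\tensS$, fix $k\in[d]$. Since $\tensX\times_k\matv_k^\top=0$ and mode products in modes $j<k$ commute with the mode-$k$ product, applying $\times_j(\matv_j^\perp)^\top$ for $j=1,\dots,k-1$ preserves the zero tensor, so the defining identity of $\tilde{\tensS}$ at index $k$ holds.

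The reverse inclusion $\tilde{\tensS}\subseteq\tensS$ is the substantive part; I would prove it by induction on $k$. The base case $k=1$ is tautological since both conditions coincide. For the inductive step, I assume $\tensX\times_j\matv_j^\top=0$ for all $j<k$. Reading this as $\matv_j^\top\matx_{(j)}=0$ shows that the columns of $\matx_{(j)}$ lie in $\Span(\matv_j^\perp)$, so $\matv_j^\perp(\matv_j^\perp)^\top\matx_{(j)}=\matx_{(j)}$, which lifts to the tensor identity $\tensX=(\tensX\times_j(\matv_j^\perp)^\top)\times_j\matv_j^\perp$. Iterating this over $j=1,\dots,k-1$, using commutativity across distinct modes, yields
\[\tensX=\bigl(\tensX\times_{j=1}^{k-1}(\matv_j^\perp)^\top\bigr)\times_{j=1}^{k-1}\matv_j^\perp.\]
Applying $\times_k\matv_k^\top$ to both sides and moving the mode-$k$ product past the mode-$j$ products for $j<k$ gives
\[\tensX\times_k\matv_k^\top=\bigl(\tensX\times_{j=1}^{k-1}(\matv_j^\perp)^\top\times_k\matv_k^\top\bigr)\times_{j=1}^{k-1}\matv_j^\perp,\]
whose inner factor is zero by the defining condition of $\tilde{\tensS}$ at index $k$. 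Thus $\tensX\times_k\matv_k^\top=0$, closing the induction and giving $(\tensX,\matv_1,\ldots,\matv_d)\in\tensS$.

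The main potential pitfall is purely notational: carefully bookkeeping the sequence of mode products and ensuring that inserting the projector $\matv_j^\perp(\matv_j^\perp)^\top$ in mode $j$ genuinely reconstructs $\tensX$ under the inductive hypothesis. No deeper variety- or manifold-theoretic input is needed; the argument reduces to linear algebra on mode unfoldings.
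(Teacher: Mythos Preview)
Your proof is correct and follows essentially the same approach as the paper: both establish $\tensS\subseteq\tilde{\tensS}$ trivially and prove $\tilde{\tensS}\subseteq\tensS$ by induction on $k$, using the orthogonal decomposition $\matI_{n_j}=\matv_j^{}\matv_j^\top+\matv_j^\perp(\matv_j^\perp)^\top$ together with the inductive hypothesis to rewrite $\tensX\times_k\matv_k^\top$ as $(\tensX\times_{j=1}^{k-1}(\matv_j^\perp)^\top\times_k\matv_k^\top)\times_{j=1}^{k-1}\matv_j^\perp$. The only cosmetic difference is that you first derive the identity $\tensX=(\tensX\times_{j=1}^{k-1}(\matv_j^\perp)^\top)\times_{j=1}^{k-1}\matv_j^\perp$ via a column-space argument on mode-$j$ unfoldings and then apply $\times_k\matv_k^\top$, whereas the paper inserts the identity $\matI_{n_j}$ directly into $\tensX\times_k\matv_k^\top$ and expands; the substance is identical.
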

\begin{proof}
    On the one hand, for $(\tensX,\matv_1,\matv_2,\dots,\matv_d)\in \tensS$, it is straightforward to verify that $(\tensX,\matv_1,\matv_2,\dots,\matv_d)\in\tilde{\tensS}$ and thus $\tensS\subseteq\tilde{\tensS}$. On the other hand, for $(\tensX,\matv_1,\matv_2,\dots,\matv_d)\in \tilde{\tensS}$, we prove $\tensX\times_k\matv_k^\top=0$ for all $k\in[d]$ by induction. In fact, $\tensX\times_k\matv_k^\top=0$ holds for $k=1$. Subsequently, by using $\matv_j^{}\matv_j^\top+\matv_j^\perp(\matv_j^\perp)^\top=\matI_{n_j}$ and $\tensX\times_j\matv_j^\top=0$ for all $j\in[k-1]$, we obtain that
    \begin{equation*}
        \begin{aligned}
            \tensX\times_k\matv_k^\top&=\tensX\times_{j=1}^{k-1}(\matv_j^{}\matv_j^\top+\matv_j^\perp(\matv_j^\perp)^\top)\times_k\matv_k^\top\\
            &=(\tensX\times_{j=1}^{k-1}(\matv_j^\perp)^\top\times_k\matv_k^\top)\times_{j=1}^{k-1}\matv_j^\perp\\
            &=0,
        \end{aligned}
    \end{equation*}
    i.e., $\tensX\times_j\matv_j^\top=0$ holds for all $j\in[k]$. Therefore, it holds that $\tensS=\tilde{\tensS}$. \hfill\squareforqed
\end{proof}

The set $\tilde{\tensS}$ does not depend on a specific $\matv_k^\perp$. For the sake of the following analysis, $\matv_k^\perp\in\St(r_k,n_k)$ is select by the last $r_k$ columns of $\matQ$, where $\matQ\in\mathcal{O}(n_k)$ is the Q-factor of the QR decomposition of $[\matv\ \matI_{n_k}]$. Since QR decomposition is differentiable, $\matv_k^\perp$ can be viewed as a smooth function of $\matv_k$~\cite[Proposition 3.4.6]{absil2009optimization}. The set $\tilde{\tensS}$ provides an equivalent characterization of $\tensS$, i.e., $\tilde{\tensS} = \tensS \subseteq \bar{\tensS}$, which facilitates the identification of manifold structure. 
\begin{proposition}
    The set $\tilde{\tensS}$ is an embedded submanifold of $\bar{\tensS}$ with dimension $\dim\tilde{\tensS}=r_1r_2\cdots r_d+\sum_{k=1}^{d}\dim\St(n_k-r_k,n_k)$.
\end{proposition}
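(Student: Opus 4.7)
The plan is to exhibit $\tilde{\tensS}$ as the zero set of a smooth submersion on $\bar{\tensS}$, and then use the constant-rank (preimage) theorem. Concretely, I would define
\[
F:\bar{\tensS}\to\prod_{k=1}^d\mathbb{R}^{r_1\times\cdots\times r_{k-1}\times(n_k-r_k)\times n_{k+1}\times\cdots\times n_d},\qquad F=(F_1,\dots,F_d),
\]
where $F_k(\tensX,\matv_1,\dots,\matv_d)=\tensX\times_{j=1}^{k-1}(\matv_j^\perp)^\top\times_k\matv_k^\top$. Since the $\matv_k^\perp$ are obtained from the QR decomposition as described immediately before the proposition, each $F_k$ is smooth on $\bar{\tensS}$, and by the very definition of $\tilde{\tensS}$ we have $\tilde{\tensS}=F^{-1}(0)$.

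Next I would show that $F$ is a submersion at every point of $\tilde{\tensS}$. It suffices to prove that the partial differential with respect to $\tensX$ alone,
\[
L:\mathbb{R}^{n_1\times\cdots\times n_d}\to\prod_{k=1}^d\mathbb{R}^{r_1\times\cdots\times r_{k-1}\times(n_k-r_k)\times n_{k+1}\times\cdots\times n_d},\qquad L(\dot\tensX)=\bigl(\dot\tensX\times_{j=1}^{k-1}(\matv_j^\perp)^\top\times_k\matv_k^\top\bigr)_{k=1}^d,
\]
is surjective at every $(\tensX,\matv_1,\dots,\matv_d)\in\tilde{\tensS}$. Applying the preceding lemma with $\dot\tensX$ in place of $\tensX$, the kernel of $L$ coincides with $\{\dot\tensX:\dot\tensX\times_k\matv_k^\top=0\ \text{for all } k\in[d]\}$. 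A $\dot\tensX$ in this kernel is exactly one whose mode-$k$ fibers lie in $\Span(\matv_k^\perp)$ for every $k$, i.e.\ $\dot\tensX=\tensC\times_{k=1}^d\matv_k^\perp$ for some $\tensC\in\mathbb{R}^{r_1\times\cdots\times r_d}$. Therefore $\dim\ker L=r_1r_2\cdots r_d$.

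Now rank--nullity gives $\dim\mathrm{image}(L)=n_1\cdots n_d-r_1\cdots r_d$. By the telescoping identity
\[
\sum_{k=1}^d r_1\cdots r_{k-1}(n_k-r_k)n_{k+1}\cdots n_d=\sum_{k=1}^d\bigl(r_1\cdots r_{k-1}n_k\cdots n_d-r_1\cdots r_k n_{k+1}\cdots n_d\bigr)=n_1\cdots n_d-r_1\cdots r_d,
\]
this matches the dimension of the codomain of $L$, so $L$ is surjective. Consequently $\mathrm{d}F$ is surjective at every point of $\tilde{\tensS}$, the preimage theorem applies, and $\tilde{\tensS}$ is an embedded submanifold of $\bar{\tensS}$ of codimension $n_1\cdots n_d-r_1\cdots r_d$. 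Subtracting this codimension from $\dim\bar{\tensS}=n_1\cdots n_d+\sum_{k=1}^d\dim\St(n_k-r_k,n_k)$ yields the stated dimension $r_1r_2\cdots r_d+\sum_{k=1}^d\dim\St(n_k-r_k,n_k)$.

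The only real obstacle is the kernel computation for $L$. The ordered, staircase structure of $F$ (with $\matv_j^\perp$ on modes $1,\dots,k-1$ and $\matv_k^\top$ on mode $k$) is what makes the defining equations independent and the codimensions add up correctly; without the preceding lemma converting $\tilde{\tensS}$ back to $\tensS$, identifying $\ker L$ as $\bigotimes_k\Span(\matv_k^\perp)$ would be awkward. With the lemma in hand this step is immediate, and the remaining bookkeeping is the purely combinatorial telescoping identity above.
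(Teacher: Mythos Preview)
Your proof is correct and follows essentially the same approach as the paper: both define the same map $F=h$ and prove it is a submersion by showing that already the partial differential in the $\tensX$-direction is surjective. The only difference is in how that surjectivity is established---the paper observes directly that the rows of the Jacobian block $\matJ_0$ are orthonormal (so $\matJ_0^{}\matJ_0^\top=\matI$), whereas you identify $\ker L$ via the preceding lemma and invoke rank--nullity together with the telescoping identity; both arguments are valid and yield the same codimension.
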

\begin{proof}
    Consider the differentiable function 
    \[h(\tensX,\matv_1,\matv_2,\dots,\matv_d)=(\tensX\times_1\matv_1^\top,\tensX\times_1(\matv_1^\perp)^\top\times_2\matv_2^\top,\dots,\tensX\times_{j=1}^{d-1}(\matv_j^\perp)^\top\times_d\matv_d^\top)\] 
    defined on $\bar{\tensS}$, which is a local defining function, i.e., $h^{-1}(0)=\tilde{\tensS}$. We observe that the Jacobian matrix of $h(\tensX,\matv_1,\matv_2,\dots,\matv_d)$ is in the form of~$[\matJ_0\ \matJ_1\ \matJ_2\ \dots\ \matJ_d]$, where 
    \[\matJ_0=\begin{bmatrix}
        \matI_{n_d}\otimes\matI_{n_{d-1}}\otimes\cdots\otimes\matI_{n_3}\otimes\matI_{n_2}\otimes\matv_1^\top\\
        \matI_{n_d}\otimes\matI_{n_{d-1}}\otimes\cdots\otimes\matI_{n_3}\otimes\matv_2^\top\otimes(\matv_1^\perp)^\top\\
        \vdots\\
        \matv_d^\top\otimes(\matv_{d-1}^\perp)^\top\otimes\cdots\otimes(\matv_1^\perp)^\top
    \end{bmatrix}\in\mathbb{R}^{(n_1n_2\cdots n_d-r_1r_2\cdots r_d)\times(n_1n_2\cdots n_d)}\]
    and $\matJ_k$ with appropriate sizes. Since $\matJ_0^{}\matJ_0^\top=\matI_{n_1n_2\cdots n_d-r_1r_2\cdots r_d}$, the differential $\mathrm{D}h(\tensX,\matv_1,\matv_2,\dots,\matv_d)$ is of full rank. Therefore, $\tilde{\tensS}$ is a submanifold of $\bar{\tensS}$ with dimension 
    \[\dim\bar{\tensS}-(n_1n_2\cdots n_d-r_1r_2\cdots r_d)=r_1r_2\cdots r_d+\sum_{k=1}^{d}\dim\St(n_k-r_k,n_k).\]\hfill\squareforqed
\end{proof}

Subsequently, we can prove that $\tensM$ is a smooth manifold in the light of the quotient manifold. 
\begin{theorem}
    $\tensM$ is a smooth manifold with dimension $r_1r_2\cdots r_d+\sum_{k=1}^{d}r_k(n_k-r_k)$.
\end{theorem}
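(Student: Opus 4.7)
The plan is to identify $\tensM$ with a quotient of $\tensS=\tilde{\tensS}$ by a Lie group action. Introduce the compact Lie group
\[
G = \mathcal{O}(n_1-r_1)\times\mathcal{O}(n_2-r_2)\times\cdots\times\mathcal{O}(n_d-r_d),
\]
acting on the embedded submanifold $\tensS\subset\bar{\tensS}$ by right multiplication on each Stiefel factor:
\[
(\matQ_1,\matQ_2,\ldots,\matQ_d)\cdot(\tensX,\matv_1,\matv_2,\ldots,\matv_d) = (\tensX,\matv_1\matQ_1,\matv_2\matQ_2,\ldots,\matv_d\matQ_d).
\]
I would first verify the routine properties: the action preserves $\tensS$ because $\tensX\times_k(\matv_k\matQ_k)^\top=(\tensX\times_k\matv_k^\top)\times_k\matQ_k^\top=0$; it is smooth in both arguments; it is free since $\matv_k\matQ_k=\matv_k$ together with $\matv_k^\top\matv_k=\matI$ forces $\matQ_k=\matI$; and it is proper by compactness of $G$. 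The quotient manifold theorem then endows $\tensS/G$ with a smooth structure of dimension
\[
\dim\tensS-\dim G = r_1r_2\cdots r_d + \sum_{k=1}^d\bigl(\dim\St(n_k-r_k,n_k)-\dim\mathcal{O}(n_k-r_k)\bigr) = r_1r_2\cdots r_d+\sum_{k=1}^d r_k(n_k-r_k),
\]
matching the claimed dimension.

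Next I would construct a smooth bijection between $\tensS/G$ and $\tensM$. The natural candidate is the $G$-invariant smooth map $\hat\Phi\colon\tensS\to\tensE$ defined by $\hat\Phi(\tensX,\matv_1,\ldots,\matv_d)=(\tensX,\matv_1\matv_1^\top,\ldots,\matv_d\matv_d^\top)$. A direct check shows $\hat\Phi(\tensS)=\tensM$: each $\matv_k\matv_k^\top$ is an orthogonal projector of rank $n_k-r_k$, hence belongs to $\Gr(n_k-r_k,n_k)$, and $\tensX\times_k(\matv_k\matv_k^\top)=(\tensX\times_k\matv_k^\top)\times_k\matv_k=0$; conversely, any $\matP_k\in\Gr(n_k-r_k,n_k)$ admits a factorization $\matP_k=\matv_k\matv_k^\top$ via its spectral decomposition, and $\matP_k\matx_{(k)}=0$ then yields $\matv_k^\top\matx_{(k)}=0$. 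Since the fibers of $\hat\Phi$ coincide exactly with the $G$-orbits, $\hat\Phi$ descends to a bijection $\Phi\colon\tensS/G\to\tensM$.

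The main obstacle is to upgrade this set-theoretic bijection to a diffeomorphism, equivalently, to exhibit $\tensM$ as a smooth embedded submanifold of $\tensE$. I would argue factor-by-factor, relying on the classical identification $\Gr(n_k-r_k,n_k)\cong\St(n_k-r_k,n_k)/\mathcal{O}(n_k-r_k)$ realized by $\matv_k\mapsto\matv_k\matv_k^\top$ as a principal bundle projection onto the closed submanifold $\{\matP\in\Sym(n_k):\matP^2=\matP,\ \rank\matP=n_k-r_k\}$ of $\Sym(n_k)$. Composing with this projection in each mode turns $\hat\Phi$ into a surjective submersion onto $\tensM$ viewed as a subset of the product manifold $\mathbb{R}^{n_1\times\cdots\times n_d}\times\prod_{k=1}^d\Gr(n_k-r_k,n_k)$, with fibers precisely the $G$-orbits. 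The universal property of the quotient then yields that $\Phi$ is a diffeomorphism, and $\tensM$ inherits the smooth embedded submanifold structure with the asserted dimension.
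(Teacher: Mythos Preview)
Your proposal is correct and follows essentially the same route as the paper: take the quotient of $\tensS=\tilde{\tensS}$ by the free proper action of $G=\prod_k\mathcal{O}(n_k-r_k)$, then identify $\tensS/G$ with $\tensM$ via the $G$-invariant map $(\tensX,\matv_1,\dots,\matv_d)\mapsto(\tensX,\matv_1\matv_1^\top,\dots,\matv_d\matv_d^\top)$. The only imprecision is the phrase ``surjective submersion onto $\tensM$'' before $\tensM$ is known to be a manifold; what is actually needed---and what the paper checks by noting that the kernel of $\mathrm{D}\hat\Phi$ equals the vertical space of the Stiefel-to-Grassmann projections---is that the descended map $\Phi\colon\tensS/G\to\tensE$ is an injective immersion and a homeomorphism onto its image, hence an embedding.
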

\begin{proof}
    First, we introduce a quotient manifold of $\tilde{\tensS}$. Given $(\tensX,\matv_1,\matv_2,\dots,\matv_d)\in\tilde{\tensS}$, we consider the  group action
    \[\theta((\tensX,\matv_1,\matv_2,\dots,\matv_d),\matQ_1,\matQ_2,\dots,\matQ_d)=(\tensX,\matv_1\matQ_1,\matv_2\matQ_2,\dots,\matv_d\matQ_d)\]
    of $(\times_{k=1}^d\mathcal{O}(n_k-r_k))$, which introduces an equivalent class $\sim$. Since the group action $\theta$ is smooth, \emph{free} (from orthogonality of $\matv_k\in\St(n_k-r_k,n_k)$), and \emph{proper} (from compactness of $(\times_{k=1}^d\mathcal{O}(n_k-r_k))$). Therefore, the set $\tilde{\tensS}/\!\sim$ is a quotient manifold of $\tilde{\tensS}$ with dimension
    \[r_1r_2\cdots r_d+\sum_{k=1}^{d}(\dim\St(n_k-r_k,n_k)-\dim\mathcal{O}(n_k-r_k))=r_1r_2\cdots r_d+\sum_{k=1}^{d}r_k(n_k-r_k);\]
    see \cite[Theorem 21.10]{lee2012smooth}. Note that $\tilde{\tensS}/\!\sim=\mathbb{R}^{n_1\times\cdots\times n_d}\times(\St(n_1-r_1,n_1)/\mathcal{O}(n_1-r_1))\times\cdots\times(\St(n_d-r_d,n_d)/\mathcal{O}(n_d-r_d))$.

    In order to show that $\tensM$ is a smooth manifold, we consider the mapping $\psi:\tilde{\tensS}/\!\sim\to\tensE$ defined by 
    \[\psi([\tensX,\matv_1,\matv_2,\dots,\matv_d])=(\tensX,\matv_1^{}\matv_1^\top,\matv_2^{}\matv_2^\top,\dots,\matv_d^{}\matv_d^\top).\]
    It holds that $\tensM=\psi(\tilde{\tensS}/\!\sim)$ and $\psi$ is a homeomorphism onto $\tensM$. Since the differential $\mathrm{D}\psi([\tensX,\matv_1,\matv_2,\dots,\matv_d])|_{\tangent_{[\tensX,\matv_1,\matv_2,\dots,\matv_d]}\!(\tilde{\tensS}/\!\sim)}$ is injective from the horizontal space of $\St(n_k-r_k,n_k)/\mathcal{O}(n_k-r_k)$ in~\cite[Example 9.26]{boumal2023intromanifolds}, the mapping $\psi$ is an immersion and thus $\tensM$ is an embedded submanifold of~$\tensE$~\cite[Proposition 5.2]{lee2012smooth}.  \hfill\squareforqed
\end{proof}

It is worth noting that while $\tensM$ is embedded in a higher-dimensional space~$\tensE$, the dimension of $\tensM$ equals to the dimension of the fixed-rank Tucker manifold $\mathbb{R}^{n_1\times n_2\times\cdots\times n_d}_{\vecr}$, which highlights the effectiveness of proposed desingularization. The proposed desingularization is able to provide a smooth manifold representation, which is beneficial to handle rank-deficient points ``gracefully'', enabling explicit projections onto tangent spaces without the complexities associated with non-smooth tensor varieties; see the following sections.

\subsection{Geometry of the desingularization: first order}
We develop the Riemannian geometry of $\tensM$ that enables first-order methods, including tangent space, Riemannian metric, projection onto the tangent space, and retraction.

Recall that $x=(\tensX,\matP_1,\matP_2,\dots,\matP_d)\in\tensM\subseteq\mathbb{R}^{n_1\times n_2\times\cdots\times n_d}\times\mathbb{R}^{n_1\times n_1}\times\mathbb{R}^{n_2\times n_2}\times\cdots\times\mathbb{R}^{n_d\times n_d}$ can be represented by $(\tensG,\matu_1,\matu_2,\dots,\matu_d)\subseteq\mathbb{R}^{r_1\times r_2\times\cdots\times r_d}\times\mathbb{R}^{n_1\times r_1}\times\mathbb{R}^{n_2\times r_2}\times\cdots\times\mathbb{R}^{n_d\times r_d}$ via~\eqref{eq: parametrization}. Therefore, the following geometric tools are also able to be represented by $(\tensG,\matu_1,\matu_2,\dots,\matu_d)$, which avoids large matrix and tensor formulations. In other words, we never assemble $x$ explicitly and only store the representation $(\tensG,\matu_1,\matu_2,\dots,\matu_d)$.

\begin{proposition}
    The tangent space of $\tensM$ at $x=(\tensX,\matP_1,\matP_2,\dots,\matP_d)$ is given by 
    \begin{equation}
        \label{eq: desingularization tangent space}
        \tangent_x\!\tensM=\left\{\begin{array}{ll}
            (\dot{\tensX},\dot{\matP}_1,\dots,\dot{\matP}_d):&\dot{\tensX}=\dot{\tensG}\times_{k=1}^d\matu_k+\sum_{k=1}^d\tensG\times_k\dot{\matu}_k\times_{j\neq k}\matu_j,\\
            &\dot{\matP}_k=-\dot{\matu}_k^{}\matu_k^\top-\matu_k^{}\dot{\matu}_k^\top,\\
            &\dot{\tensG}\in\mathbb{R}^{r_1\times r_2\times\cdots\times r_d},\dot{\matu}_k\in\mathbb{R}^{n_k\times r_k},\dot{\matu}_k^\top\matu_k^{}=0
        \end{array} \right\},
    \end{equation}
    where $(\tensG,\matu_1,\matu_2,\dots,\matu_d)$ is a representation of $x\in\tensM$.
\end{proposition}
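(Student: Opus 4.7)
The plan is to produce the tangent space by differentiating the parametrization~\eqref{eq: parametrization} and then verifying completeness by a dimension count.

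First, I parametrize nearby points on $\tensM$ by smooth curves of parameters. Take a smooth curve $t\mapsto (\tensG(t),\matu_1(t),\dots,\matu_d(t))$ with $\tensG(t)\in\mathbb{R}^{r_1\times\cdots\times r_d}$ and $\matu_k(t)\in\St(r_k,n_k)$, passing through $(\tensG,\matu_1,\dots,\matu_d)$ at $t=0$, and set $\tensX(t)=\tensG(t)\times_{k=1}^d\matu_k(t)$, $\matP_k(t)=\matI_{n_k}-\matu_k(t)\matu_k(t)^\top$. The map $(\tensG(t),\matu_1(t),\dots,\matu_d(t))\mapsto(\tensX(t),\matP_1(t),\dots,\matP_d(t))$ takes values in $\tensM$, so differentiating at $t=0$ produces an element of $\tangent_x\!\tensM$, with the formulae for $\dot{\tensX}$ and $\dot{\matP}_k$ stated in~\eqref{eq: desingularization tangent space} obtained by the Leibniz rule and $\dot{\matu}_k(0)=\dot{\matu}_k$.

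Next, I fix a gauge to eliminate the redundancy in the representation~\eqref{eq: parametrization}. Since $(\tensG,\matu_1,\dots,\matu_d)$ and $(\tensG\times_{k=1}^d\matQ_k^\top,\matu_1\matQ_1,\dots,\matu_d\matQ_d)$ yield the same $x=(\tensX,\matP_1,\dots,\matP_d)$ for any $\matQ_k\in\mathcal{O}(r_k)$, I restrict to horizontal perturbations on each Stiefel factor, i.e.\ $\dot{\matu}_k^\top\matu_k=0$. Any $\dot{\matu}_k$ satisfying this condition is realizable as the velocity of a Stiefel curve (e.g.\ via the QR- or polar-based retraction), so every tuple $(\dot{\tensG},\dot{\matu}_1,\dots,\dot{\matu}_d)$ with $\dot{\matu}_k^\top\matu_k=0$ yields, through the above construction, a bona fide element of $\tangent_x\!\tensM$. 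This establishes the inclusion ``$\supseteq$'' in~\eqref{eq: desingularization tangent space}.

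For the reverse inclusion, I count dimensions and verify injectivity of the linear map $L:(\dot{\tensG},\dot{\matu}_1,\dots,\dot{\matu}_d)\mapsto(\dot{\tensX},\dot{\matP}_1,\dots,\dot{\matP}_d)$ on the domain $\{(\dot{\tensG},\dot{\matu}_1,\dots,\dot{\matu}_d):\dot{\matu}_k^\top\matu_k=0,\ k\in[d]\}$. If $L(\dot{\tensG},\dot{\matu}_1,\dots,\dot{\matu}_d)=0$, then from $\dot{\matP}_k=-\dot{\matu}_k\matu_k^\top-\matu_k\dot{\matu}_k^\top=0$, right-multiplying by $\matu_k$ and using $\dot{\matu}_k^\top\matu_k=0$ gives $\dot{\matu}_k=0$; substituting back into $\dot{\tensX}=\dot{\tensG}\times_{k=1}^d\matu_k=0$ and contracting against $\matu_k^\top$ in each mode yields $\dot{\tensG}=0$. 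Thus $L$ is injective, and the image has dimension $r_1r_2\cdots r_d+\sum_{k=1}^d r_k(n_k-r_k)=\dim\tensM$, matching the already-established dimension of $\tensM$. Consequently the RHS of~\eqref{eq: desingularization tangent space} coincides with $\tangent_x\!\tensM$. The most delicate step is choosing the gauge $\dot{\matu}_k^\top\matu_k=0$ (rather than the weaker $\sym(\matu_k^\top\dot{\matu}_k)=0$ of the Stiefel tangent space), since this is exactly what makes $L$ injective and its image full-dimensional while keeping each $\dot{\matu}_k$ realizable by a Stiefel curve.
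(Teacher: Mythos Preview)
Your proof is correct and follows essentially the same route as the paper: show the right-hand side is contained in $\tangent_x\!\tensM$ by realizing each parameter tuple as the velocity of a curve in $\tensM$, then conclude equality by matching dimensions. Your explicit injectivity argument for $L$ (right-multiply $\dot{\matP}_k=0$ by $\matu_k$, then contract $\dot{\tensX}=0$ mode-wise) makes the dimension count $\dim T=r_1\cdots r_d+\sum_k r_k(n_k-r_k)$ rigorous, whereas the paper simply asserts this value; otherwise the two proofs coincide.
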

\begin{proof}
    Denote the right hand side by $T$. First, we show that $T\subseteq\tangent_x\!\tensM$. For all $(\dot{\tensX},\dot{\matP}_1,\dots,\dot{\matP}_d)\in T$ with $\dot{\matP}_k=-\dot{\matu}_k^{}\matu_k^\top-\matu_k^{}\dot{\matu}_k^\top$, since $\dot{\matu}_k^\top\matu_k^{}=0$, it follows from~\cite[\S 7.3]{boumal2023intromanifolds} that there exists a smooth curve $\matu_k(t):\mathbb{R}\to\St(r_k,n_k)$ such that $\matu_k(0)=\matu_k$ and $\matu_k^\prime(0)=\dot{\matu}_k$. Subsequently, we consider the smooth curve $\gamma:\mathbb{R}\to\tensM$ defined by
    \[\gamma(t)=((\tensG+t\dot{\tensG})\times_{k=1}^d\matu_k(t),\gamma_1(t),\gamma_2(t),\dots,\gamma_d(t))\in\tensM,\]
    where $\gamma_k(t)=\matI_{n_k}-\matu_k(t)\matu_k(t)^\top$. It is straightforward to show that $\gamma(0)=x$ and $\gamma^\prime(0)=(\dot{\tensX},\dot{\matP}_1,\dots,\dot{\matP}_d)$. Therefore, $(\dot{\tensX},\dot{\matP}_1,\dots,\dot{\matP}_d)\in\tangent_x\!\tensM$. Since $\dim T=r_1r_2\cdots r_d+\sum_{k=1}^d (n_kr_k-r_k^2)=\dim\tensM=\dim\tangent_x\!\tensM$, we obtain that $T=\tangent_x\!\tensM$. \hfill\squareforqed
\end{proof}

In practice, a tangent vector in $\tangent_x\!\tensM$ can be represented by parameters $\dot{\tensG}\in\mathbb{R}^{r_1\times r_2\times\cdots\times r_d}$ and $\dot{\matu}_k\in\mathbb{R}^{n_k\times r_k}$ satisfying $\dot{\matu}_k^\top\matu_k^{}=0$, which is able to save computational cost. To facilitate the projections, we endow the ambient space $\tensE$ with an inner product $\langle\cdot,\cdot\rangle_x$ defined by 
\[\langle\barxi,\bareta\rangle_x=\langle\tensA,\tensB\rangle+\sum_{k=1}^{d}\langle\matA_k,\matB_k\rangle,\]
for $x=(\tensX,\matP_1,\matP_2,\dots,\matP_d)\in\tensM$, tangent vectors $\barxi=(\tensA,\matA_1,\dots,\mata_d)\in\tangent_x\!\tensE\simeq\tensE$ and $\bareta=(\tensB,\matB_1,\dots,\matb_d)\in\tangent_x\!\tensE\simeq\tensE$, enabling $(\tensE,\langle\cdot,\cdot\rangle)$ to be a Riemannian manifold. Then, $\tensM$ is a Riemannian submanifold of $\tensE$ by inheriting the inner product $\langle\cdot,\cdot\rangle$ from $\tensE$. Given two tangent vectors $\xi,\eta\in\tangent_x\!\tensM$, the inner product of $\xi,\eta$ can be computed by their parameters in the following proposition.
\begin{proposition}
    Given tangent vectors $\xi,\eta\in\tangent_x\!\tensM$ represented by $(\hat{\tensG},\hat{\matu}_1,\dots,\hat{\matu}_d)$ and $(\breve{\tensG},\breve{\matu}_1,\dots,\breve{\matu}_d)$, the inner product of $\xi,\eta$ can be efficiently computed by
    \begin{equation}
        \label{eq: inner product desing}
        \langle\xi,\eta\rangle_x=\langle\hat{\tensG},\breve{\tensG}\rangle+\sum_{k=1}^{d}\langle\hat{\matu}_k,\breve{\matu}_k(2\matI_{r_k}+\matG_{(k)}^{}\matG_{(k)}^\top)\rangle.
    \end{equation}
\end{proposition}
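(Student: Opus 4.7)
The plan is a direct computation: expand the ambient inner product using the tangent-space parametrisation \eqref{eq: desingularization tangent space} and collapse everything to parameter space using the orthogonality relations $\matu_k^\top \matu_k = \matI_{r_k}$ and $\hat{\matu}_k^\top \matu_k = \breve{\matu}_k^\top \matu_k = 0$. I would split the computation into the tensor part and the $d$ matrix parts, and rely on the identity $\|\tensA\|_\frob^2 = \|\matA_{(k)}\|_\frob^2$ together with the mode-$k$ unfolding formula $(\tensC\times_j \matW_j)_{(k)} = \matW_k \matC_{(k)}(\otimes_{j\neq k}\matW_j)^\top$ so that trace manipulations do the rest.

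First I would handle the tensor contribution $\langle\tensA,\tensB\rangle$ where $\tensA = \hat{\tensG}\times_{j=1}^d\matu_j + \sum_k \tensG\times_k\hat{\matu}_k\times_{j\neq k}\matu_j$ and analogously for $\tensB$. Expanding gives four groups of terms. The pure ``core'' term $\langle\hat{\tensG}\times_j\matu_j,\breve{\tensG}\times_j\matu_j\rangle$ reduces immediately to $\langle\hat{\tensG},\breve{\tensG}\rangle$ by orthonormality of the $\matu_j$. The mixed terms, e.g.\ $\langle\hat{\tensG}\times_j\matu_j,\tensG\times_k\breve{\matu}_k\times_{j\neq k}\matu_j\rangle$, taken under mode-$k$ unfolding carry a factor $\breve{\matu}_k^\top \matu_k = 0$, hence vanish. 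For the cross-mode term with $k\neq \ell$, the same trick on mode $\ell$ yields a factor $\breve{\matu}_\ell^\top \matu_\ell = 0$; so only the diagonal $k=\ell$ survives, giving, via mode-$k$ unfolding and $((\matu_j)^{\otimes j\neq k})^\top(\matu_j)^{\otimes j\neq k} = \matI$,
\begin{equation*}
    \sum_{k=1}^d \tr\bigl(\breve{\matu}_k^\top \hat{\matu}_k^{}\,\matG_{(k)}^{}\matG_{(k)}^\top\bigr)
    = \sum_{k=1}^d \bigl\langle \hat{\matu}_k,\,\breve{\matu}_k^{}\matG_{(k)}^{}\matG_{(k)}^\top \bigr\rangle.
\end{equation*}

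Next I would handle each $\langle \matA_k,\matB_k\rangle = \langle -\hat{\matu}_k\matu_k^\top -\matu_k\hat{\matu}_k^\top,\, -\breve{\matu}_k\matu_k^\top -\matu_k\breve{\matu}_k^\top\rangle$. Expanding into four trace terms, two of them (the mixed ones) contain $\hat{\matu}_k^\top\matu_k = 0$ or $\breve{\matu}_k^\top\matu_k = 0$ and vanish, while the other two each equal $\tr(\hat{\matu}_k^\top\breve{\matu}_k) = \langle\hat{\matu}_k,\breve{\matu}_k\rangle$ after using $\matu_k^\top\matu_k = \matI_{r_k}$. Hence $\langle\matA_k,\matB_k\rangle = 2\langle\hat{\matu}_k,\breve{\matu}_k\rangle$. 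Summing over $k$ and combining with the tensor part yields $\langle\hat{\tensG},\breve{\tensG}\rangle + \sum_k \langle \hat{\matu}_k, \breve{\matu}_k(2\matI_{r_k} + \matG_{(k)}^{}\matG_{(k)}^\top)\rangle$, which is exactly~\eqref{eq: inner product desing}.

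The hardest part is just the bookkeeping of the cross terms in the tensor summand: there are $(d+1)^2$ pairs to consider, and one must systematically pick the right mode unfolding to expose the orthogonality factor $\breve{\matu}_k^\top\matu_k = 0$ (or $\hat{\matu}_k^\top\matu_k = 0$) that kills each off-diagonal contribution. Once that pattern is stated once, the rest of the computation is mechanical, and no new geometric input beyond the tangent-space description and the normalisation $\matu_k^\top\matu_k = \matI_{r_k}$ is needed.
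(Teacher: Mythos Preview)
Your proposal is correct and follows essentially the same approach as the paper: expand the ambient inner product via the tangent-space parametrisation, use $\matu_k^\top\matu_k=\matI_{r_k}$ and $\matu_k^\top\hat{\matu}_k=\matu_k^\top\breve{\matu}_k=0$ to eliminate all mixed and off-diagonal terms, and collect the surviving contributions. The paper's proof is terser but the logic and the key orthogonality manipulations are identical.
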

\begin{proof}
    By using $\matu_k\in\St(r_k,n_k)$, $\matu_k^\top\hat{\matu}_k^{}=0$ and $\matu_k^\top\breve{\matu}_k^{}=0$, we obtain that 
    \begin{equation*}
        \begin{aligned}
            \langle\xi,\eta\rangle_x&=\langle\hat{\tensG}\times_{k=1}^d\matu_k+\sum_{k=1}^d\tensG\times_k\hat{\matu}_k\times_{j\neq k}\matu_j,\breve{\tensG}\times_{k=1}^d\matu_k+\sum_{k=1}^d\tensG\times_k\breve{\matu}_k\times_{j\neq k}\matu_j\rangle\\
            &~~~+\sum_{k=1}^{d}\langle\hat{\matu}_k^{}\matu_k^\top+\matu_k^{}\hat{\matu}_k^\top,\breve{\matu}_k^{}\matu_k^\top+\matu_k^{}\breve{\matu}_k^\top\rangle\\
            &=\langle\hat{\tensG},\breve{\tensG}\rangle+\sum_{k=1}^d\langle\tensG\times_k\hat{\matu}_k,\tensG\times_k\breve{\matu}_k\rangle+2\sum_{k=1}^{d}\langle\hat{\matu}_k^{},\breve{\matu}_k^{}\rangle\\
            \omit &=\langle\hat{\tensG},\breve{\tensG}\rangle+\sum_{k=1}^{d}\langle\hat{\matu}_k,\breve{\matu}_k(2\matI_{r_k}+\matG_{(k)}^{}\matG_{(k)}^\top)\rangle. \hspace{4.7cm}\squareforqed
        \end{aligned}
    \end{equation*} 
\end{proof}

Subsequently, we compute the projection of a vector onto the tangent space.
\begin{proposition}\label{prop: proj onto tangent}
    Given $x=(\tensX,\matP_1,\dots,\matP_d)\in\tensM$ represented by $(\tensG,\matu_1,\dots,\matu_d)$, and a vector $\barxi=(\tensA,\mata_1,\dots,\mata_d)\in\tensE$, the orthogonal projection $\proj_x\!\barxi$ onto $\tangent_x\!\tensM$ is a tangent vector represented by
    \begin{equation}\label{eq: projection onto tangent}
        \begin{aligned}
            \tilde{\tensG}&=\tensA\times_{k=1}^d\matu_k^\top,\\ 
            \tilde{\matU}_k&=\matP_k(\mata_{(k)}^{}\!\matv_k^{}\matG_{(k)}^\top-2\sym(\mata_k)\matu_k)(2\matI_{r_k}+\matG_{(k)}^{}\matG_{(k)}^\top)^{-1},
        \end{aligned}
    \end{equation}
    where $\matv_k=(\matu_j)^{\otimes j\neq k}$ for $k\in[d]$.
\end{proposition}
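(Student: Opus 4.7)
The plan is to characterize $\proj_x\!\barxi$ by the variational condition
$\langle \barxi - \proj_x\!\barxi, \eta\rangle_x = 0$ for every $\eta \in \tangent_x\!\tensM$,
and to turn this into equations on the parameters $(\tilde{\tensG}, \tilde{\matU}_1, \dots, \tilde{\matU}_d)$ defining $\proj_x\!\barxi$. The key observation is that the tangent space description~\eqref{eq: desingularization tangent space} together with the inner-product formula~\eqref{eq: inner product desing} lets us decouple the equations into one for the core part (indexed by $\dot{\tensG}$) and $d$ independent equations for the factor parts (indexed by $\dot{\matu}_k$), because cross-terms between $\dot{\tensG}$ and $\dot{\matu}_k$, and between $\dot{\matu}_j$ and $\dot{\matu}_k$ for $j\neq k$, vanish under the horizontality condition $\matu_k^\top\dot{\matu}_k=0$.

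Next, I would expand $\langle \barxi, \eta\rangle_x$ for an arbitrary $\eta\in\tangent_x\!\tensM$ parametrized by $(\dot{\tensG},\dot{\matu}_1,\dots,\dot{\matu}_d)$. Using the identities $\langle \tensA, \dot{\tensG}\times_{k=1}^d\matu_k\rangle = \langle \tensA\times_{k=1}^d\matu_k^\top, \dot{\tensG}\rangle$, $(\tensG\times_k\dot{\matu}_k\times_{j\neq k}\matu_j)_{(k)} = \dot{\matu}_k\matG_{(k)}\matv_k^\top$, and $\langle \mata_k, -\dot{\matu}_k\matu_k^\top - \matu_k\dot{\matu}_k^\top\rangle = -2\langle \sym(\mata_k)\matu_k, \dot{\matu}_k\rangle$, the computation collapses to
\begin{equation*}
    \langle\barxi,\eta\rangle_x = \bigl\langle \tensA\times_{k=1}^d\matu_k^\top,\, \dot{\tensG}\bigr\rangle + \sum_{k=1}^{d}\bigl\langle \mata_{(k)}\matv_k\matG_{(k)}^\top - 2\sym(\mata_k)\matu_k,\, \dot{\matu}_k\bigr\rangle.
\end{equation*}
On the other hand, \eqref{eq: inner product desing} yields
$\langle \proj_x\!\barxi, \eta\rangle_x = \langle \tilde{\tensG}, \dot{\tensG}\rangle + \sum_k \langle \tilde{\matU}_k(2\matI_{r_k}+\matG_{(k)}\matG_{(k)}^\top), \dot{\matu}_k\rangle$.

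Matching the coefficient of the free variable $\dot{\tensG}\in\mathbb{R}^{r_1\times\cdots\times r_d}$ immediately gives $\tilde{\tensG}=\tensA\times_{k=1}^d\matu_k^\top$. For each mode~$k$, the $\dot{\matu}_k$ is not a free variable in $\mathbb{R}^{n_k\times r_k}$ but must satisfy the horizontality constraint $\matu_k^\top\dot{\matu}_k=0$, which I would encode by writing $\dot{\matu}_k=\matP_k\matW_k$ for arbitrary $\matW_k\in\mathbb{R}^{n_k\times r_k}$, using $\matP_k=\matI_{n_k}-\matu_k\matu_k^\top$. Since $\matP_k\tilde{\matU}_k=\tilde{\matU}_k$ as well (again by horizontality of $\tilde{\matU}_k$), the matching condition becomes the equation $\matP_k(\mata_{(k)}\matv_k\matG_{(k)}^\top - 2\sym(\mata_k)\matu_k) = \tilde{\matU}_k(2\matI_{r_k}+\matG_{(k)}\matG_{(k)}^\top)$ for each $k$. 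Inverting the symmetric positive-definite matrix $2\matI_{r_k}+\matG_{(k)}\matG_{(k)}^\top$ yields the formula~\eqref{eq: projection onto tangent} for $\tilde{\matU}_k$, and a quick check confirms $\matu_k^\top\tilde{\matU}_k=0$ so that the result is genuinely a tangent vector.

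The main technical obstacle is the careful bookkeeping of the mode-$k$ unfolding and Kronecker-product identities, together with verifying that the decoupling across components really holds (i.e., that cross-terms vanish). This is precisely what the proof of the previous proposition on the Riemannian metric establishes, so the current argument can lean on it; invertibility of $2\matI_{r_k}+\matG_{(k)}\matG_{(k)}^\top$ is immediate from its positive-definiteness.
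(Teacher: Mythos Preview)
Your proposal is correct and follows essentially the same approach as the paper: both use the variational characterization $\langle\barxi-\proj_x\!\barxi,\eta\rangle_x=0$, expand it via the tangent-space parametrization~\eqref{eq: desingularization tangent space} and the inner-product formula~\eqref{eq: inner product desing}, and then read off the parameters $(\tilde{\tensG},\tilde{\matU}_k)$ from the decoupled equations. Your treatment of the constraint $\matu_k^\top\dot{\matu}_k=0$ via $\dot{\matu}_k=\matP_k\matW_k$ is slightly more explicit than the paper's, but the argument is the same.
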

\begin{proof}
    Since $\proj_x\!\barxi\in\tangent_x\tensM$ is a tangent vector, it suffices to compute the parameters $(\tilde{\tensG},\tilde{\matu}_1,\dots,\tilde{\matu}_d)$. For any $\eta\in\tangent_x\!\tensM$ represented by $(\dot{\tensG},\dot{\matu}_1,\dots,\dot{\matu}_d)$, it holds that $\langle\eta,\barxi-\proj_x\!\barxi\rangle_x=0$, i.e.,
    \begin{equation*}
        \langle\dot{\tensG},\tensA\times_{k=1}^d\matu_k^\top-\tilde{\tensG}\rangle+\sum_{k=1}^d\langle\dot{\matu}_k,\mata_{(k)}\!\matv_k\matG_{(k)}^\top-2\sym(\mata_k)\matu_k-\tilde{\matu}_k(2\matI_{r_k}+\matG_{(k)}^{}\matG_{(k)}^\top)\rangle=0
    \end{equation*}
    holds for all parameters $(\dot{\tensG},\dot{\matu}_1,\dots,\dot{\matu}_d)$. Therefore, we obtain that 
    \[\tilde{\tensG}=\tensA\times_{k=1}^d\matu_k^\top\ \text{and}\ \tilde{\matU}_k=\matP_k(\mata_{(k)}\!\matv_k\matG_{(k)}^\top-2\sym(\mata_k)\matu_k)(2\matI_{r_k}+\matG_{(k)}^{}\matG_{(k)}^\top)^{-1}.\] 
    Since $\matP_k\matu_k=0$, we obtain that $\tilde{\matU}_k^\top\matu_k^{}=0$. Consequently, $(\tilde{\tensG},\tilde{\matu}_1,\dots,\tilde{\matu}_d)$ is a representation of $\proj_x\!\barxi$. \hfill\squareforqed
\end{proof}

By using Proposition~\ref{prop: proj onto tangent}, we obtain the \emph{Riemannian gradient} of $g=f\circ\varphi$. Specifically, the Riemannian gradient $\grad g(x)$ is the unique tangent vector satisfying $\langle\grad g(x),\eta\rangle_x=\mathrm{D} g(x)[\eta]$ for all $\eta\in\tangent_x\!\tensM$, where $\mathrm{D}g(x)[\eta]$ refers to the differential of $g$ at $x$ along $\eta$. The Riemannian gradient can be computed by $\grad g(x)=\proj_{\tangent_x\!\tensM}(\nabla g(x))$~\cite[Proposition 3.61]{boumal2023intromanifolds}, where $\nabla g(x)$ is the Euclidean gradient of $g$ at $x$.
\begin{corollary}
    Let $\barxi=\nabla g(x)=(\nabla f(\tensX),0,0,\dots,0)\in\tensE$, the Riemannian gradient of $g$ at $x=(\tensX,\matP_1,\matP_2,\dots,\matP_d)$ can be computed by 
    \begin{equation}
        \label{eq: Riemannian gradient desing}
        \grad g(x)=\Big(\nabla f(\tensX)\times_{k=1}^d\proj_{\matu_k}+\sum_{k=1}^{d}\tensG\times_{k}\tilde{\matu}_k\times_{j\neq k}\matu_j,\tilde{\matP}_1,\tilde{\matP}_2,\dots,\tilde{\matP}_d\Big),
    \end{equation}
    where $\proj_{\matu_k}=\matu_k^{}\matu_k^\top$, $\tilde{\matP}_k=-2\sym(\tilde{\matu}_k^{}\matu_k^\top)$ and $\tilde{\matu}_k=\matP_k(\nabla f(\tensX))_{(k)}\!\matv_k\matG_{(k)}^\top(2\matI_{r_k}+\matG_{(k)}^{}\matG_{(k)}^\top)^{-1}$.
\end{corollary}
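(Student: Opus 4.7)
The plan is to directly invoke Proposition~\ref{prop: proj onto tangent} with the specific vector $\barxi = \nabla g(x) = (\nabla f(\tensX), 0, 0, \dots, 0)$ and then translate the resulting parameter representation back into the intrinsic tangent-vector expression given by~\eqref{eq: desingularization tangent space}. The starting point is the standard identity $\grad g(x) = \proj_{\tangent_x\!\tensM}(\nabla g(x))$, which holds because $\tensM$ is a Riemannian submanifold of $\tensE$ equipped with the inherited inner product. The Euclidean gradient $\nabla g(x)$ in turn follows from the chain rule: since $g = f \circ \varphi$ and $\varphi$ only reads off the first component, the only nonzero slot of $\nabla g(x)$ in $\tensE$ is the $\mathbb{R}^{n_1 \times \cdots \times n_d}$ slot, which equals $\nabla f(\tensX)$.

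Next, I would apply Proposition~\ref{prop: proj onto tangent} with $\tensA = \nabla f(\tensX)$ and $\mata_k = 0$ for each $k \in [d]$. This immediately yields the parameter representation $(\tilde{\tensG}, \tilde{\matu}_1, \dots, \tilde{\matu}_d)$ of $\grad g(x)$, where $\tilde{\tensG} = \nabla f(\tensX) \times_{k=1}^d \matu_k^\top$ and the $2\sym(\mata_k)\matu_k$ term drops out, leaving $\tilde{\matu}_k = \matP_k (\nabla f(\tensX))_{(k)} \matv_k \matG_{(k)}^\top (2\matI_{r_k} + \matG_{(k)} \matG_{(k)}^\top)^{-1}$, exactly as claimed.

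Finally, I would unfold these parameters into the ambient representation of the tangent vector using~\eqref{eq: desingularization tangent space}. For the tensor slot, $\tilde{\tensG} \times_{k=1}^d \matu_k = \nabla f(\tensX) \times_{k=1}^d (\matu_k \matu_k^\top) = \nabla f(\tensX) \times_{k=1}^d \proj_{\matu_k}$, which accounts for the first term in~\eqref{eq: Riemannian gradient desing}; the sum $\sum_k \tensG \times_k \tilde{\matu}_k \times_{j \neq k} \matu_j$ matches the second term verbatim. For the $\matP_k$ slots, the formula $\dot{\matP}_k = -\tilde{\matu}_k \matu_k^\top - \matu_k \tilde{\matu}_k^\top = -2\sym(\tilde{\matu}_k \matu_k^\top)$ gives precisely $\tilde{\matP}_k$ in the statement. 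The only mild check is that the substitution $\mata_k = 0$ is legitimate in the derivation of Proposition~\ref{prop: proj onto tangent}, but this is transparent: the proposition is stated for arbitrary $(\tensA, \mata_1, \dots, \mata_d) \in \tensE$, so no obstacle arises. The whole argument is therefore a direct specialization, with no substantive obstacle beyond bookkeeping the mode-$k$ algebra.
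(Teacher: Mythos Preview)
Your proposal is correct and follows essentially the same approach as the paper: the corollary is stated immediately after Proposition~\ref{prop: proj onto tangent} without an explicit proof, with the paper simply noting that $\grad g(x)=\proj_{\tangent_x\!\tensM}(\nabla g(x))$ and that $\nabla g(x)=(\nabla f(\tensX),0,\dots,0)$. Your specialization of the proposition with $\tensA=\nabla f(\tensX)$ and $\mata_k=0$, followed by unfolding the parameters via~\eqref{eq: desingularization tangent space}, is exactly the intended argument.
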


A retraction $\retr:\tangent\!\tensM\to\tensM$ is a smooth mapping such that the curve $c(t)=\retr_x(t\dot{x})$ satisfies $c(0)=x$ and $c^\prime(0)=\dot{x}$ for all $(x,\dot{x})\in\tangent\!\tensM$, which is a basic operation of Riemannian methods to map a point to $\tensM$. We provide a retraction on~$\tensM$ via parameters.
\begin{proposition}
    Given $x=(\tensX,\matP_1,\matP_2,\dots,\matP_d)\in\tensM$, $\dot{x}=(\dot{\tensX},\dot{\matP}_1,\dot{\matP}_2,\dots,\dot{\matP}_d)\in\tangent_x\!\tensM$ represented by $({\tensG},{\matu}_1,{\matu}_2,\dots,{\matu}_d)$ and $(\dot{\tensG},\dot{\matu}_1,\dot{\matu}_2,\dots,\dot{\matu}_d)$ respectively, the mapping
    \begin{equation}
        \label{eq: desingularization retraction}
        \retr_x(\dot{x})=((\tensX+\dot{\tensX})\times_{k=1}^d(\bar{\matu}_k^{}\bar{\matu}_k^\top),\matI_{n_1}-\bar{\matu}_1^{}\bar{\matu}_1^\top,\matI_{n_2}-\bar{\matu}_2^{}\bar{\matu}_2^\top,\dots,\matI_{n_d}-\bar{\matu}_d^{}\bar{\matu}_d^\top)
    \end{equation} 
    defines a retraction on $\tensM$, where $\bar{\matu}_k=(\matu_k+\dot{\matu}_k)((\matu_k+\dot{\matu}_k)^\top(\matu_k+\dot{\matu}_k))^{-\frac12}$.
\end{proposition}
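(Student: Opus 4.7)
The plan is to verify the four defining properties of a retraction: (i) $\retr_x(\dot{x})$ lies in $\tensM$ for all $\dot{x}\in\tangent_x\!\tensM$, (ii) $\retr$ is smooth, (iii) $\retr_x(0)=x$, and (iv) the velocity condition $\frac{d}{dt}\retr_x(t\dot{x})\big|_{t=0}=\dot{x}$. The formula is built on the polar retraction for the Stiefel manifold applied mode-wise through the parameter representation $(\tensG,\matu_1,\ldots,\matu_d)$, so the analysis reduces to the Stiefel case plus tensor-algebraic bookkeeping, systematically exploiting $\matu_k^\top\matu_k=\matI_{r_k}$ and $\matu_k^\top\dot{\matu}_k=0$ from the tangent-space description~\eqref{eq: desingularization tangent space}.

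For (i), the orthogonality $\matu_k^\top\dot{\matu}_k=0$ makes $(\matu_k+\dot{\matu}_k)^\top(\matu_k+\dot{\matu}_k)=\matI_{r_k}+\dot{\matu}_k^\top\dot{\matu}_k$ positive definite, so $\bar{\matu}_k\in\St(r_k,n_k)$ and $\matI_{n_k}-\bar{\matu}_k\bar{\matu}_k^\top\in\Gr(n_k-r_k,n_k)$. The kernel condition follows because the mode-$k$ unfolding of $(\tensX+\dot{\tensX})\times_{j=1}^d(\bar{\matu}_j\bar{\matu}_j^\top)$ has column range contained in $\Span(\bar{\matu}_k)$, which the complementary projector $\matI_{n_k}-\bar{\matu}_k\bar{\matu}_k^\top$ annihilates, showing that $\retr_x(\dot{x})\in\tensM$. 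Smoothness (ii) is inherited from smooth dependence of the inverse square root on $\dot{\matu}_k$ (the Gram matrix stays uniformly positive definite) composed with polynomial tensor operations. For (iii), substituting $\dot{x}=0$ gives $\bar{\matu}_k=\matu_k$, and the repeated mode-$k$ products by $\matu_k\matu_k^\top$ act as the identity on $\tensX=\tensG\times_{j=1}^d\matu_j$ by orthonormality, recovering $x$.

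The main work is (iv), and the key preliminary step is to compute $\bar{\matu}_k'(0)=\dot{\matu}_k$. Writing $M(t):=(\matu_k+t\dot{\matu}_k)^\top(\matu_k+t\dot{\matu}_k)=\matI_{r_k}+t^2\dot{\matu}_k^\top\dot{\matu}_k$, the orthogonality relation produces no linear-in-$t$ term, hence $M'(0)=0$, so $(M^{-1/2})'(0)=0$, and the product rule yields $\bar{\matu}_k'(0)=\dot{\matu}_k$. Consequently the $\matP_k$-components satisfy $\frac{d}{dt}(\matI_{n_k}-\bar{\matu}_k\bar{\matu}_k^\top)\big|_{t=0}=-\dot{\matu}_k\matu_k^\top-\matu_k\dot{\matu}_k^\top=\dot{\matP}_k$, exactly matching~\eqref{eq: desingularization tangent space}.

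The remaining and most delicate step is the tensor component. Applying the product rule to $(\tensX+t\dot{\tensX})\times_{j=1}^d(\bar{\matu}_j(t)\bar{\matu}_j(t)^\top)$ at $t=0$ yields $\dot{\tensX}\times_{j=1}^d(\matu_j\matu_j^\top)+\sum_k\tensX\times_k(\dot{\matu}_k\matu_k^\top+\matu_k\dot{\matu}_k^\top)\times_{j\neq k}(\matu_j\matu_j^\top)$. Substituting the tangent-vector expansion of $\dot{\tensX}$ and repeatedly applying $\matu_j^\top\matu_j=\matI_{r_j}$ and $\matu_j^\top\dot{\matu}_j=0$, the Stiefel-variation contributions inside $\dot{\tensX}\times_{j=1}^d(\matu_j\matu_j^\top)$ are killed on their respective modes by $\matu_k\matu_k^\top\dot{\matu}_k=0$, leaving only $\dot{\tensG}\times_{j=1}^d\matu_j$; inside the sum, each $\tensX\times_k(\matu_k\dot{\matu}_k^\top)$ vanishes via $\dot{\matu}_k^\top\matu_k=0$, while each $\tensX\times_k(\dot{\matu}_k\matu_k^\top)\times_{j\neq k}(\matu_j\matu_j^\top)$ collapses to $\tensG\times_k\dot{\matu}_k\times_{j\neq k}\matu_j$. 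The surviving pieces reassemble exactly to $\dot{\tensG}\times_{j=1}^d\matu_j+\sum_k\tensG\times_k\dot{\matu}_k\times_{j\neq k}\matu_j=\dot{\tensX}$, completing the verification. This cancellation bookkeeping, rather than any single deep idea, is the principal obstacle.
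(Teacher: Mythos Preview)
Your proposal is correct and follows essentially the same approach as the paper: both hinge on showing $\bar{\matu}_k'(0)=\dot{\matu}_k$ via the vanishing of the linear-in-$t$ term in the Gram matrix (you write $M(t)=\matI_{r_k}+t^2\dot{\matu}_k^\top\dot{\matu}_k$ directly, the paper differentiates $\matM_k(t)^2$), and then carry out the same product-rule computation with the orthogonality relations $\matu_k^\top\matu_k=\matI_{r_k}$ and $\matu_k^\top\dot{\matu}_k=0$ to collapse the tensor and projector derivatives to $\dot{\tensX}$ and $\dot{\matP}_k$. Your treatment is slightly more thorough in that you explicitly verify the image lies in $\tensM$, smoothness, and the centering condition $\retr_x(0)=x$, whereas the paper focuses only on the velocity condition and asserts the curve is smooth on $\tensM$ without elaboration.
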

\begin{proof}
    We consider the smooth curve $\gamma(t)=(\gamma_0(t),\gamma_1(t),\gamma_2(t),\dots,\gamma_d(t))$ on $\tensM$, where $\gamma_0(t)=(\tensX+t\dot{\tensX})\times_{k=1}^d(\bar{\matu}_k(t)\bar{\matu}_k(t)^\top)$, $\gamma_k(t)=\matI_{n_k}-\bar{\matu}_k(t)\bar{\matu}_k(t)^\top$ with $\bar{\matu}_k(t)=(\matu_k+t\dot{\matu}_k)\matM_k(t)$ and $\matM_k(t)=((\matu_k+t\dot{\matu}_k)^\top(\matu_k+t\dot{\matu}_k))^{-\frac12}$. We aim to show that $\gamma(0)=x$ and $\gamma^\prime(0)=\dot{x}$, or equivalently $\gamma_0(0)=\tensX$, $\gamma_0^\prime(0)=\dot{\tensX}$, $\gamma_k(0)=\matP_k$ and $\gamma_k^\prime(0)=\dot{\matP}_k$ for $k\in[d]$.

    To this end, we firstly compute the derivative of $\matM_k(t)$ at $0$. We differentiate the equality $\matM_k(t)^2=((\matu_k+t\dot{\matu}_k)^\top(\matu_k+t\dot{\matu}_k))^{-1}$ and yield \[\matM_k(t)\matM_k^\prime(t)+\matM_k^\prime(t)\matM_k(t)=-2\matM_k(t)^2\sym(\dot{\matu}_k^\top(\matu_k+t\dot{\matu}_k))\matM_k(t)^2.\]
    It follows from $\matM_k(0)=\matI_{r_k}$ and $\dot{\matu}_k^\top\matu_k^{}=0$ that $\matM_k^\prime(0)=0$. Hence, we yield that $\bar{\matu}_k(0)=\matu_k$ and $\bar{\matu}_k^\prime(0)=\dot{\matu}_k$. Subsequently, by using the parametrization of the tangent space~\eqref{eq: desingularization tangent space}, we obtain that
    \begin{equation*}
        \begin{aligned}
            \gamma_0^\prime(0)&=\dot{\tensX}\times_{k=1}^d(\matu_k^{}\matu_k^\top)+\sum_{k=1}^{d}\tensX\times_k(\dot{\matu}_k^{}\matu_k^\top+\matu_k^{}\dot{\matu}_k^\top)\times_{j\neq k}(\matu_j^{}\matu_j^\top)\\
            &=\dot{\tensG}\times_{k=1}^d\matu_k+\sum_{k=1}^{d}\tensG\times_{k}\dot{\matu}_k\times_{j\neq k}\matu_j=\dot{\tensX},\\
            \gamma_k^\prime(0)&=-(\dot{\matu}_k^{}\matu_k^\top+\matu_k^{}\dot{\matu}_k^\top)=\dot{\matP}_k\quad\text{for}\quad k\in[d],
        \end{aligned}
    \end{equation*}
    i.e., $\gamma^\prime(0)=\dot{x}$. Therefore, $\retr_x(\dot{x})$ defines a retraction on $\tensM$. \hfill\squareforqed
\end{proof}

\subsection{Geometry of the desingularization: second order}
In order to facilitate second-order methods, we compute the Riemannian Hessian $\Hess\!g(x)$. It follows from~\cite[\S 5.11]{boumal2023intromanifolds} that
\[\Hess\!g(x)[\dot{x}]=\proj_x\Big(\tensP(\nabla f(\tensX),0,0,\dots,0)+(\nabla^2 f(\tensX)[\dot{\tensX}],0,0,\dots,0)\Big)\]
for $(x,\dot{x})\in\tangent\!\tensM$, where $\tensP:\tensE\to\tensE$ denotes the differential of the orthogonal projection defined by $\tensP(\barxi):=\mathrm{D}(x\mapsto\proj_x\!\barxi)(x)[\dot{x}]$ for $\barxi=(\tensA,\mata_1,\mata_2,\dots,\mata_d)\in\tensE$, and $\nabla^2 f(\tensX)$ is the Euclidean Hessian of $f$ at $\tensX$.

\begin{proposition}
Given $x=(\tensX,\matP_1,\matP_2,\dots,\matP_d)\in\tensM$, $\dot{x}=(\dot{\tensX},\dot{\matP}_1,\dot{\matP}_2,\dots,\dot{\matP}_d)\in\tangent_x\!\tensM$ represented by $({\tensG},{\matu}_1,{\matu}_2,\dots,{\matu}_d)$ and $(\dot{\tensG},\dot{\matu}_1,\dot{\matu}_2,\dots,\dot{\matu}_d)$, the Riemannian Hessian $\Hess\!g(x)[\dot{x}]\in\tangent_x\!\tensM$ can be represented by $(\hat{\tensG}+\breve{\tensG},\hat{\matu}_1+\breve{\matu}_1,\hat{\matu}_2+\breve{\matu}_2,\dots,\hat{\matu}_d+\breve{\matu}_d)$, where
\begin{equation}\label{eq: Riemannian Hessian}
    \begin{aligned}
        \hat{\tensG}&=-\sum_{k=1}^{d}\big(\nabla f(\tensX)\times_k\dot{\matu}_k^\top\times_{j\neq k}\matu_j^\top+\tensG\times_k(\dot{\matu}_k^\top(\nabla f(\tensX))_{(k)}\!\matv_k\matG_{(k)}^\top\tilde{\matF}_k^{-1})\big),\\
        \hat{\matu}_k&=\matP_k(\nabla f(\tensX))_{(k)}\!\left(\dot{\matv}_k\matG_{(k)}^\top+\matv_k\dot{\matG}_{(k)}^\top-\matv_k\matG_{(k)}^\top\tilde{\matF}_k^{-1}\matG_{(k)}\dot{\matG}_{(k)}^\top\right)\tilde{\matF}_k^{-1},\\
        \breve{\tensG}&=\nabla^2 f(\tensX)[\dot{\tensX}]\times_{k=1}^d\matu_k^\top,\\
        \breve{\matu}_k&=\matP_k(\nabla^2 f(\tensX)[\dot{\tensX}])_{(k)}\!\matv_k\matG_{(k)}^\top\tilde{\matF}_k^{-1}
    \end{aligned}
\end{equation}
with $\dot{\matv}_k\in\mathbb{R}^{n_{-k}\times r_{-k}}$ such that $\dot{\matx}_{(k)}=\dot{\matu}_k\matG_{(k)}\!\matv_k^\top+\matu_k\dot{\matG}_{(k)}\!\matv_k^\top+\matu_k\matG_{(k)}\!\dot{\matv}_k^\top$ and $\tilde{\matF}_k=2\matI_{r_k}+\matG_{(k)}^{}\matG_{(k)}^\top$ for $k\in[d]$.
\end{proposition}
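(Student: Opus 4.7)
The plan is to invoke the Riemannian Hessian formula for embedded submanifolds cited immediately before the proposition, $\Hess g(x)[\dot{x}]=\proj_x(\tensP(\barxi)+\bareta)$ with $\barxi=(\nabla f(\tensX),0,\dots,0)$ and $\bareta=(\nabla^2 f(\tensX)[\dot{\tensX}],0,\dots,0)$, and to compute each summand entirely in the parameter representation using the tangent-space characterization~\eqref{eq: desingularization tangent space} together with the projection formula of Proposition~\ref{prop: proj onto tangent}. The $\proj_x\bareta$ piece will produce the $\breve{\cdot}$ components and the $\proj_x\tensP(\barxi)$ piece will produce the $\hat{\cdot}$ components, so that no large ambient tensor is ever assembled.

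For the $\proj_x\bareta$ piece, a single application of Proposition~\ref{prop: proj onto tangent} with $\tensA=\nabla^2 f(\tensX)[\dot{\tensX}]$ and $\mata_k=0$ directly yields the stated $\breve{\tensG}$ and $\breve{\matu}_k$. For the $\proj_x\tensP(\barxi)$ piece I would introduce a smooth curve $\gamma(t)\in\tensM$ represented by $(\tensG(t),\matu_1(t),\dots,\matu_d(t))$ with $\gamma(0)=x$ and $\gamma'(0)=\dot{x}$, realize $\proj_{\gamma(t)}\barxi$ in parameter form via Proposition~\ref{prop: proj onto tangent} (with $\barxi$ held fixed) as
\[
\tilde{\tensG}_0(t)=\nabla f(\tensX)\times_{k=1}^d\matu_k(t)^\top,\qquad \tilde{\matu}_{k,0}(t)=\matP_k(t)(\nabla f(\tensX))_{(k)}\matv_k(t)\matG_{(k)}(t)^\top\tilde{\matF}_k(t)^{-1},
\]
convert to ambient form via~\eqref{eq: desingularization tangent space}, differentiate at $t=0$ using $\frac{d}{dt}\matP_k=-\dot{\matu}_k\matu_k^\top-\matu_k\dot{\matu}_k^\top$, the defining relation $\dot{\matx}_{(k)}=\dot{\matu}_k\matG_{(k)}\matv_k^\top+\matu_k\dot{\matG}_{(k)}\matv_k^\top+\matu_k\matG_{(k)}\dot{\matv}_k^\top$ for $\dot{\matv}_k$, and $\frac{d}{dt}\tilde{\matF}_k^{-1}=-\tilde{\matF}_k^{-1}(\dot{\matG}_{(k)}\matG_{(k)}^\top+\matG_{(k)}\dot{\matG}_{(k)}^\top)\tilde{\matF}_k^{-1}$, and finally apply Proposition~\ref{prop: proj onto tangent} once more to project the resulting ambient vector back onto $\tangent_x\tensM$ and read off $(\hat{\tensG},\hat{\matu}_1,\dots,\hat{\matu}_d)$.

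The hard part will be the bookkeeping for $\hat{\matu}_k$: all five factors of $\tilde{\matu}_{k,0}(t)$ contribute derivatives, and the final projection must be tracked through both the tensor and matrix components of $\tensP(\barxi)$, with the matrix components entering via the $-2\sym(\mata_k)\matu_k$ piece of Proposition~\ref{prop: proj onto tangent}. Repeated use of $\matP_k\matu_k=0$, $\matu_k^\top\matu_k=\matI_{r_k}$, $\dot{\matu}_k^\top\matu_k=0$, $\matP_k^2=\matP_k$ and $\matv_k^\top\matv_k=\matI_{r_{-k}}$ will collapse most cross terms; in particular, of the two symmetric summands produced by the derivative of $\tilde{\matF}_k^{-1}$, only the one appearing inside the parenthesis of $\hat{\matu}_k$ should survive after cancellation against a piece arising from the derivative of $\matG_{(k)}(t)^\top$, while the remaining Weingarten-type cross term coming from $\dot{\matP}_k$ on the left is what supplies the factor-$\tensG\times_k(\dot{\matu}_k^\top(\nabla f(\tensX))_{(k)}\matv_k\matG_{(k)}^\top\tilde{\matF}_k^{-1})$ appearing in $\hat{\tensG}$.
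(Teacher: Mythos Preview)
Your plan is correct and follows the same high-level route as the paper: split $\Hess g(x)[\dot x]$ into $\proj_x\bareta$ (giving $\breve{\tensG},\breve{\matu}_k$ immediately from Proposition~\ref{prop: proj onto tangent}) and $\proj_x\tensP(\barxi)$ (giving $\hat{\tensG},\hat{\matu}_k$ after differentiating the projection and reprojecting), and your list of identities and anticipated cancellations for $\hat{\matu}_k$ matches exactly what happens.

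The one organizational difference worth noting: the paper does \emph{not} differentiate the small-parameter representation $(\tilde{\tensG}_0(t),\tilde{\matu}_{k,0}(t))$ directly. Instead it first rewrites $\proj_x\barxi$ purely in the ambient coordinates $(\tensX,\matP_1,\dots,\matP_d)$ by introducing $\matF_k=2\matI_{n_k}+\matX_{(k)}\matX_{(k)}^\top$, $\matE_k=\mata_{(k)}\matX_{(k)}^\top-2\sym(\mata_k)(\matI_{n_k}-\matP_k)$ and $\matc_k=\matP_k\matE_k\matF_k^{-1}$, so that $\tensP(\barxi)$ is obtained by differentiating a function of $(\tensX,\matP_k)$ along $(\dot{\tensX},\dot{\matP}_k)$. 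This buys a cleaner separation between ``differentiate'' and ``convert back to small parameters,'' and in particular packages your ``Weingarten-type cross term from $\dot{\matP}_k$ on the left'' and your ``derivative of $\matG_{(k)}(t)^\top$'' contributions into a single object $\dot{\matc}_k$, after which the cancellation you predict (one half of $-2\sym(\dot{\matG}_{(k)}\matG_{(k)}^\top)$ against the $\dot{\matG}_{(k)}\matG_{(k)}^\top$ term) falls out from $\matP_k\matb_{(k)}\matX_{(k)}^\top\matu_k\tilde{\matF}_k^{-1}+2\matP_k\dot{\matc}_k\matu_k\tilde{\matF}_k^{-1}=\matP_k\dot{\matc}_k\matu_k+(\text{two explicit terms})$. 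Your curve-based approach in the $(\tensG,\matu_k)$ parameters is equally valid and leads to the same algebra; the ambient rewrite just makes the bookkeeping more mechanical.
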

\begin{proof}
    Since $\Hess\!g(x)[\dot{x}]$ is a tangent vector in $\tangent_x\!\tensM$, it suffices to compute the parameters $(\hat{\tensG},\hat{\matu}_1,\hat{\matu}_2,\dots,\hat{\matu}_d)$ and $(\breve{\tensG},\breve{\matu}_1,\breve{\matu}_2,\dots,\breve{\matu}_d)$ for two tangent vectors $\proj_x(\tensP(\nabla f(\tensX),0,0,\dots,0))$ and $\proj_x(\nabla^2 f(\tensX)[\dot{\tensX}],0,0,\dots,0)$ respectively. The Riemannian Hessian $\Hess\!f(x)[\dot{x}]$ is computed by following steps.

    \paragraph{Step 1: rewrite the projection operator}
    We first express the projection $\proj_x\!\barxi$ by parameters $(\tensX,\matP_1,\matP_2,\dots,\matP_d)$ for $\barxi=(\tensA,\mata_1,\dots,\mata_d)$. Denote $\matF_k=2\matI_{n_k}+\matX_{(k)}^{}\matX_{(k)}^\top$ and $\tilde{\matF}_k=2\matI_{r_k}+\matG_{(k)}^{}\matG_{(k)}^\top$ for $k\in[d]$. We observe that $\matF_k^{-1}=\matu_k^{}\tilde{\matF}_k\matu_k^\top+\frac12\matP_k$ and $\tilde{\matF}_k^{-1}=\matu_k^\top\matF_k^{-1}\matu_k$. Subsequently, we can reformulate the parameters $\tilde{\matu}_k$ in Proposition~\ref{prop: proj onto tangent}:
        \begin{align}
            \tilde{\matu}_k&=\matP_k(\mata_{(k)}^{}\!\matv_k^{}\matG_{(k)}^\top-(\mata_k+\mata_k^\top)\matu_k)(2\matI_{r_k}+\matG_{(k)}^{}\matG_{(k)}^\top)^{-1}\nonumber\\
            &=\matP_k(\mata_{(k)}^{}\!\matv_k^{}\matG_{(k)}^\top-(\mata_k+\mata_k^\top)\matu_k)\matu_k^\top\matF_k^{-1}\matu_k\nonumber\\
            &=\matP_k\matE_k\matF_k^{-1}\matu_k,\label{eq: rewrite tildeUk}
        \end{align}
    where $\matE_k=\mata_{(k)}^{}\matx_{(k)}^\top-(\mata_k+\mata_k^\top)(\matI_{n_k}-\matP_k)$. 
    Therefore, we obtain that 
    \begin{equation*}
        \begin{aligned}
            \proj_x\!\barxi=\Big(&\tensA\times_{k=1}^d(\matI_{n_k}-\matP_k)+\sum_{k=1}^{d}\tensX\times_k(\matP_k\matE_k\matF_k^{-1}),\\
            &-2\sym(\matP_1\matE_1\matF_1^{-1}(\matI_{n_1}-\matP_1)),\\
            &-2\sym(\matP_2\matE_2\matF_2^{-1}(\matI_{n_2}-\matP_2)),\\
            &\qquad\qquad\qquad\vdots\\
            &-2\sym(\matP_d\matE_d\matF_d^{-1}(\matI_{n_d}-\matP_d))\Big),
        \end{aligned}
    \end{equation*} 
    which only depends on the parameters $(\tensX,\matP_1,\matP_2,\dots,\matP_d)$.
    
    \paragraph{Step 2: compute the differential of the orthogonal projection}
    The differential of the orthogonal projection $\tensP(\barxi)=\mathrm{D}(x\mapsto\proj_x\!\barxi)(x)[\dot{x}]$ is
    \begin{equation*}
        \begin{aligned}
            \tensP(\barxi)=\Big(&\sum_{k=1}^{d}(-\tensA\times_k\dot{\matP}_k\times_{j\neq k}(\matI_{n_j}-\matP_j)+\dot{\tensX}\times_k\matc_k+\tensX\times_k\dot{\matc}_k),\\
            &-2\sym(\dot{\matc}_1(\matI_{n_1}-\matP_1)-\matc_1\dot{\matP}_1),\\
            &-2\sym(\dot{\matc}_2(\matI_{n_2}-\matP_2)-\matc_2\dot{\matP}_2),\\
            &\qquad\qquad\qquad\vdots\\
            &-2\sym(\dot{\matc}_d(\matI_{n_d}-\matP_d)-\matc_d\dot{\matP}_d)\Big).
        \end{aligned}
    \end{equation*}
    where $\matc_k=\matP_k\matE_k\matF_k^{-1}$, $\dot{\matc}_k=(\dot{\matP}_k\matE_k+\matP_k\dot{\matE}_k-\matP_k\matE_k\matF_k^{-1}\dot{\matF}_k)\matF_k^{-1}$ is the differential of $\matc_k$, $\dot{\matE}_k=\mata_{(k)}^{}\dot{\matx}_{(k)}^\top+(\mata_k+\mata_k^\top)\dot{\matP}_k$ and $\dot{\matF}_k=\dot{\matX}_{(k)}^{}\matX_{(k)}^\top+\matX_{(k)}^{}\dot{\matX}_{(k)}^\top=2\sym((\dot{\matu}_k\matG_{(k)}+\matu_k\dot{\matG}_{(k)})\matG_{(k)}^\top\matu_k^\top)$.

    \paragraph{Step 3: compute parameters}
    Denote $\tensP(\barxi)=(\tensB,\matb_1,\matb_2,\dots,\matb_d)$. In order to yield parametrizations of $\proj_x(\tensP(\nabla f(\tensX),0,0,\dots,0))$ and $\proj_x(\nabla^2 f(\tensX)[\dot{\tensX}],0,0,\dots,0)$, we set $\barxi=(\nabla f(\tensX),0,0,\dots,0)$. Then, we can simplify the matrices 
    \[\matE_k=\mata_{(k)}\matx_{(k)}^\top,\ \dot{\matE}_k=\mata_{(k)}^{}\dot{\matx}_{(k)}^\top,\ \text{and}\ \matc_k=\matP_k\mata_{(k)}\!\matv_k\matG_{(k)}^\top\tilde{\matF}_k^{-1}\matu_k^\top\] 
    with $\tensA=\nabla f(\tensX)$. We start with the parameters $(\hat{\tensG},\hat{\matu}_1,\hat{\matu}_2,\dots,\hat{\matu}_d)$ for the tangent vector $\proj_x(\tensP(\nabla f(\tensX),0,0,\dots,0))$, and we obtain that 
    \begin{equation*}
        \begin{aligned}
            \hat{\tensG}&=\tensB\times_{k=1}^d\matu_k^\top\\
            &=\sum_{k=1}^{d}\big(-\tensA\times_k\dot{\matu}_k^\top\times_{j\neq k}\matu_j^\top+(\dot{\tensX}\times_k(\matu_k^\top\matc_k)+\tensX\times_k(\matu_k^\top\dot{\matc}_k))\times_{j\neq k}\matu_j^\top\big)\\
            &=\sum_{k=1}^{d}\big(-\tensA\times_k\dot{\matu}_k^\top\times_{j\neq k}\matu_j^\top+\tensX\times_k(\matu_k^\top\dot{\matc}_k)\times_{j\neq k}\matu_j^\top\big)\\
            &=-\sum_{k=1}^{d}\big(\tensA\times_k\dot{\matu}_k^\top\times_{j\neq k}\matu_j^\top+\tensG\times_k(\dot{\matu}_k^\top\mata_{(k)}\!\matv_k\matG_{(k)}^\top\tilde{\matF}_k^{-1})\big),
        \end{aligned}
    \end{equation*}
    where we uses the facts that $\matu_k^\top\matc_k=0$ and $\matu_k^\top\matF_k^{-1}=\tilde{\matF}_k^{-1}\matu_k^\top$. For parameter~$\hat{\matu}_k$, it follows from~\eqref{eq: rewrite tildeUk} that 
    \begin{equation*}
        \begin{aligned}
            \hat{\matu}_k&=\matP_k\big(\matb_{(k)}^{}\matx_{(k)}^\top-(\matb_k+\matb_k^\top)(\matI_{n_k}-\matP_k)\big)\matF_k^{-1}\matu_k\\
            &=\matP_k\matb_{(k)}^{}\matx_{(k)}^\top\matu_k\tilde{\matF}_k^{-1}-2\matP_k\matb_k(\matI_{n_k}-\matP_k)\matu_k\tilde{\matF}_k^{-1}.
        \end{aligned}
    \end{equation*}
    We first compute the term 
    \begin{equation*}
        \begin{aligned}
            \matP_k\matb_k(\matI_{n_k}-\matP_k)\matu_k&=\matP_k\matb_k\matu_k=-\matP_k\dot{\matc}_k\matu_k+\matc_k\dot{\matP}_k\matu_k\\
            &=-\matP_k\dot{\matc}_k\matu_k-\matc_k\dot{\matu}_k\\
            &=-\matP_k\dot{\matc}_k\matu_k,
        \end{aligned}
    \end{equation*}
    where we use the facts that $\matP_k\dot{\matP}_k=\dot{\matP}_k(\matI_{n_k}-\matP_k)=-\dot{\matu}_k^{}\matu_k^\top$, $\matc_k^\top\matu_k^{}=0$, and $\matc_k\dot{\matu}_k=0$. Then, we have
    \begin{equation*}
        \begin{aligned}
            \matP_k\dot{\matc}_k\matu_k&=-\dot{\matu}_k{}\matu_k^\top\mata_{(k)}\!\matv_k\matG_{(k)}^\top\tilde{\matF}_k^{-1}+\matP_k\mata_{(k)}(\dot{\matv}_k\matG_{(k)}^\top+\matv_k\dot{\matG}_{(k)}^\top)\tilde{\matF}_k^{-1}\\
            &~~~-2\matP_k\mata_{(k)}\!\matv_k\matG_{(k)}^\top\tilde{\matF}_k^{-1}\sym(\dot{\matG}_{(k)}\matG_{(k)}^\top)\tilde{\matF}_k^{-1},
        \end{aligned}
    \end{equation*}
    where $\dot{\matv}_k\in\mathbb{R}^{n_{-k}\times r_{-k}}$ such that $\dot{\matx}_{(k)}=\dot{\matu}_k\matG_{(k)}\matv_k^\top+\matu_k\dot{\matG}_{(k)}\matv_k^\top+\matu_k\matG_{(k)}\dot{\matv}_k^\top$. 
    Subsequently, we compute the other term
    \begin{equation*}
        \begin{aligned}
            \matP_k\matb_{(k)}^{}\matx_{(k)}^\top\matu_k&=\matP_k\matb_{(k)}^{}\!\matv_k\matG_{(k)}^\top\\
            &=\big(\dot{\matu}_k^{}\matu_k^\top\mata_{(k)}\!\matv_k+\matc_k(\dot{\matu}_k\matG_{(k)}+\matu_k\dot{\matG}_{(k)})+\matP_k\dot{\matc}_k\matu_k\matG_{(k)}\big)\matG_{(k)}^\top\\
            &=\big(\dot{\matu}_k^{}\matu_k^\top\mata_{(k)}\!\matv_k+\matc_k\matu_k\dot{\matG}_{(k)}+\matP_k\dot{\matc}_k\matu_k\matG_{(k)}\big)\matG_{(k)}^\top\\ 
            &=\big(\dot{\matu}_k^{}\matu_k^\top\mata_{(k)}\!\matv_k+\matP_k\mata_{(k)}\!\matv_k\matG_{(k)}^\top\tilde{\matF}_k^{-1}\dot{\matG}_{(k)}+\matP_k\dot{\matc}_k\matu_k\matG_{(k)}\big)\matG_{(k)}^\top,
        \end{aligned}
    \end{equation*}
    where we use the facts that $\matP_k\dot{\matP}_k=\dot{\matP}_k(\matI_{n_k}-\matP_k)=-\dot{\matu}_k^{}\matu_k^\top$, $\matc_k\dot{\matu}_k=0$, and $\tensX\times_j\dot{\matc}_j\times_k\matP_k=0$ if $j\neq k$. Note that $\matb_{(k)}$ is the mode-$k$ unfolding of $\tensB$, which is different from $\matb_k$. Therefore, we obtain that 
    \begin{equation*}
        \begin{aligned}
            \hat{\matu}_k&=\matP_k\matb_{(k)}^{}\matx_{(k)}^\top\matu_k\tilde{\matF}_k^{-1}+2\matP_k\dot{\matc}_k\matu_k\tilde{\matF}_k^{-1}\\
            &=\big(\dot{\matu}_k^{}\matu_k^\top\mata_{(k)}\!\matv_k+\matP_k\mata_{(k)}\!\matv_k\matG_{(k)}^\top\tilde{\matF}_k^{-1}\dot{\matG}_{(k)}\big)\matG_{(k)}^\top\tilde{\matF}_k^{-1}+\matP_k\dot{\matc}_k\matu_k\\
            &=\matP_k\mata_{(k)}(\dot{\matv}_k\matG_{(k)}^\top+\matv_k\dot{\matG}_{(k)}^\top)\tilde{\matF}_k^{-1}-\matP_k\mata_{(k)}\!\matv_k\matG_{(k)}^\top\tilde{\matF}_k^{-1}\matG_{(k)}\dot{\matG}_{(k)}^\top\tilde{\matF}_k^{-1}.
        \end{aligned}
    \end{equation*}
    
    Next, we compute the parameters $(\breve{\tensG},\breve{\matu}_1,\breve{\matu}_2,\dots,\breve{\matu}_d)$ of the tangent vector $\proj_x(\nabla^2 f(\tensX)[\dot{\tensX}],0,0,\dots,0)$. It follows from Proposition~\ref{prop: proj onto tangent} that 
    \begin{equation*}
        \begin{aligned}
            \breve{\tensG}&=\nabla^2 f(\tensX)[\dot{\tensX}]\times_{k=1}^d\matu_k^\top,\\
            \breve{\matu}_k&=\matP_k(\nabla^2 f(x)[\dot{x}])_{(k)}\matv_k\matG_{(k)}^\top\tilde{\matF}_k^{-1}.
        \end{aligned}
    \end{equation*}
    Consequently, since $\hat{\matu}_k^\top\matu_k^{}=\breve{\matu}_k^\top\matu_k^{}=0$, the Riemannian Hessian $\Hess\!g(x)$ can be viewed as an operator mapping the tangent vector $\dot{x}\in\tangent_x\!\tensM$ represented by $(\dot{\tensG},\dot{\matu}_1,\dot{\matu}_2,\dots,\dot{\matu}_d)$ to a tangent vector represented by $(\hat{\tensG}+\breve{\tensG},\hat{\matu}_1+\breve{\matu}_1,\hat{\matu}_2+\breve{\matu}_2,\dots,\hat{\matu}_d+\breve{\matu}_d)$. \hfill\squareforqed    
\end{proof}

\subsection{Connection to existing geometries}\label{subsec: connection}
We illustrate the connection of $\tensM$ to the geometries of matrix variety $\mathbb{R}^{m\times n}_{\leq r}$, desingularization $\tensM(\mathbb{R}^{m\times n},r)$ of matrix variety, and Tucker tensor variety $\mathbb{R}^{n_1\times\cdots\times n_d}_{\leq\vecr}$. 

\paragraph{Connection to matrix varieties}
Given $\barxi=(\tensA,\mata_1,\mata_2,\dots,\mata_d)\in\tensE$, we introduce the mappings $\proj_1,\proj_2,\dots,\proj_d$ defined by
\[\proj_k(\barxi):=(\mata_{(k)}^\top,\mata_k)\]
for $k\in[d]$. By using
\begin{equation*}
    \begin{array}{rclllllcl}
        x & = & (\tensX, & \matP_1, & \matP_2, & \dots, & \matP_d) & \in & \tensM,\vspace{2mm}\\
        \varphi(x) & = & ~\tensX & & & & & \in & \mathbb{R}_{\leq\vecr}^{n_1\times n_2\times \cdots\times n_d},\vspace{2mm}\\
        \proj_1(x) & = & (\matx_{(1)}^\top, & \matP_1) & & & & \in & \tensM(\mathbb{R}^{n_{-1}\times n_1},r_1), \vspace{2mm}\\
        \proj_2(x) & = & (\matx_{(2)}^\top, & & \matP_2) & & & \in & \tensM(\mathbb{R}^{n_{-2}\times n_2},r_2), \\
        \multicolumn{7}{c}{\vdots}\vspace{2mm}\\
        \proj_d(x) & = & (\matx_{(d)}^\top, & & & & \matP_d) & \in & \tensM(\mathbb{R}^{n_{-d}\times n_d},r_d),
    \end{array}
\end{equation*}
we observe that the desingularization $\tensM$ can be interpreted by the intersection of tensorized desingularization of matrix varieties
\[\tensM=\{x\in\tensE:\proj_k(x)\in\tensM(\mathbb{R}^{n_{-k}\times n_k},r_k),k\in[d]\}.\]
Note that in the matrix case ($d=2$), $\tensM(\mathbb{R}^{n_1\times n_2},\vecr)$ in~\eqref{eq: desingularization of Tucker} is essentially different from $\tensM(\mathbb{R}^{n_1\times n_2},r)$ in~\eqref{eq: matrix desing} since $\vecr=(r_1,r_2)$ is an array. In other words, we also propose a new parametrization for matrix varieties, which differs from the one in~\cite{khrulkov2018desingularization}.

\paragraph{Connection to tensor varieties}
Given $x=(\tensX,\matP_1,\matP_2,\dots,\matP_d)\in\tensM$, we show the connection between the first element of tangent vectors in $\tangent_x\!\tensM$ and the tangent cone of $\mathbb{R}^{n_1\times n_2\times\cdots\times n_d}_{\leq\vecr}$ at~$\tensX$. Note that the mappings $\varphi,\proj_1,\dots,\proj_d$ are also well-defined on the tangent space~$\tangent_x\!\tensM$.
\begin{proposition}
    Given $x=(\tensX,\matP_1,\matP_2,\dots,\matP_d)\in\tensM$, it holds that
    \begin{equation}
        \label{eq: inclusion of tangent space and tangent cone}
        \varphi(\tangent_x\!\tensM)\subseteq\tangent_{\tensX}\!\mathbb{R}^{n_1\times n_2\times\cdots\times n_d}_{\leq\vecr}.
    \end{equation}
    Moreover, for a fixed $\tensX\in\mathbb{R}^{n_1\times n_2\times\cdots\times n_d}_{\leq\vecr}$, it holds that
    \[\bigcup_{x\in\tensM\cap\varphi^{-1}(\tensX)}\varphi(\tangent_x\!\tensM)=\tangent_{\tensX}\!\mathbb{R}^{n_1\times n_2\times\cdots\times n_d}_{\leq\vecr}.\]
\end{proposition}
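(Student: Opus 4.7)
The plan is to combine the explicit parametrization of $\tangent_x\!\tensM$ in~\eqref{eq: desingularization tangent space} with the parametrization of the tangent cone in~\eqref{eq: Tucker tangent cone}. The inclusion~\eqref{eq: inclusion of tangent space and tangent cone} follows by a smooth-curve argument: given $\dot x\in\tangent_x\!\tensM$, the retraction~\eqref{eq: desingularization retraction} supplies a smooth curve $\gamma\colon(-\epsilon,\epsilon)\to\tensM$ with $\gamma(0)=x$ and $\gamma'(0)=\dot x$, and composing with $\varphi$ produces a smooth curve $\varphi\circ\gamma$ taking values in $\varphi(\tensM)=\mathbb{R}^{n_1\times\cdots\times n_d}_{\leq\vecr}$ whose velocity at $t=0$ is $\varphi(\dot x)$; by definition of the Bouligand tangent cone this places $\varphi(\dot x)$ in $\tangent_\tensX\!\mathbb{R}^{n_1\times\cdots\times n_d}_{\leq\vecr}$. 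Taking the union over $x\in\tensM\cap\varphi^{-1}(\tensX)$ then yields the $\subseteq$ direction of the claimed equality.

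For the reverse inclusion I would realize every element described by~\eqref{eq: Tucker tangent cone} as $\varphi(\dot x)$ for a suitable $(x,\dot x)$. Fix $\tensX\in\mathbb{R}^{n_1\times\cdots\times n_d}_{\leq\vecr}$ with $\ranktc(\tensX)=\underline\vecr\leq\vecr$ and a thin Tucker decomposition $\tensX=\underline\tensG\times_{k=1}^d\underline{\matu}_k$, and let $\tensV$ correspond to data $(\tensC,\matR_{k,2},\matu_{k,1},\matu_{k,2})$ with $[\underline{\matu}_k\ \matu_{k,1}\ \matu_{k,2}]\in\mathcal{O}(n_k)$. Set $\matu_k=[\underline{\matu}_k\ \matu_{k,1}]\in\St(r_k,n_k)$ and $\matP_k=\matI_{n_k}-\matu_k^{}\matu_k^\top$, and define a zero-padded core $\tensG\in\mathbb{R}^{r_1\times\cdots\times r_d}$ by $\tensG(i_1,\dots,i_d)=\underline\tensG(i_1,\dots,i_d)$ when $i_k\leq\underline r_k$ for every $k$ and $0$ otherwise, so that $\tensG\times_{k=1}^d\matu_k=\underline\tensG\times_{k=1}^d\underline{\matu}_k=\tensX$; via~\eqref{eq: parametrization} this provides $x=(\tensX,\matP_1,\dots,\matP_d)\in\tensM\cap\varphi^{-1}(\tensX)$. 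For the tangent direction, pick $\dot{\tensG}=\tensC$ and $\dot{\matu}_k=\matu_{k,2}[\matR_{k,2}\ 0]\in\mathbb{R}^{n_k\times r_k}$ whose last $r_k-\underline r_k$ columns vanish; the required orthogonality $\dot{\matu}_k^\top\matu_k=0$ is automatic because $\matu_{k,2}$ is orthogonal to $[\underline{\matu}_k\ \matu_{k,1}]$.

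It then remains to verify $\varphi(\dot x)=\tensV$. The first summand $\dot{\tensG}\times_{k=1}^d\matu_k=\tensC\times_{k=1}^d[\underline{\matu}_k\ \matu_{k,1}]$ matches the first term of~\eqref{eq: Tucker tangent cone} verbatim. For the second, the block structure of $\tensG$ together with $\matu_j=[\underline{\matu}_j\ \matu_{j,1}]$ implies that $\tensG\times_{j\neq k}\matu_j$ vanishes on mode-$k$ indices $i_k>\underline r_k$ and equals $\underline\tensG\times_{j\neq k}\underline{\matu}_j$ on $i_k\leq\underline r_k$; contracting in mode $k$ with $\dot{\matu}_k=\matu_{k,2}[\matR_{k,2}\ 0]$ picks up only the first $\underline r_k$ columns, collapsing the product to $\underline\tensG\times_k(\matu_{k,2}\matR_{k,2})\times_{j\neq k}\underline{\matu}_j$ and reproducing the $k$-th term of the second sum in~\eqref{eq: Tucker tangent cone}. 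Summing over $k$ gives $\varphi(\dot x)=\tensV$. The main obstacle I anticipate is exactly this bookkeeping of the zero-padded core alongside the column-block structure of $\dot{\matu}_k$; once those identifications are nailed down, the remaining steps amount to direct substitution.
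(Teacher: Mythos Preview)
Your proposal is correct. The reverse inclusion is carried out essentially verbatim as in the paper: both you and the paper take $\tilde{\matu}_k=[\underline{\matu}_k\ \matu_{k,1}]$, zero-pad $\underline{\tensG}$ to obtain $\tensG$, set $\dot{\tensG}=\tensC$ and $\dot{\matu}_k=[\matu_{k,2}\matR_{k,2}\ 0]$, and then verify that the resulting tangent vector projects to $\tensV$; your block-structure bookkeeping is exactly the computation the paper summarizes in one line.

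The first inclusion is where the two arguments diverge. You use the generic fact that a smooth map $\varphi$ sends tangent vectors of $\tensM$ into the Bouligand tangent cone of $\varphi(\tensM)$, realized concretely via the retraction curve $t\mapsto\varphi(\retr_x(t\dot x))$. The paper instead works explicitly: it fixes a thin decomposition $\tensX=\underline{\tensG}\times_{k=1}^d\underline{\matu}_k$, writes $\matu_k=[\underline{\matu}_k\ \matu_{k,1}]\matQ_k$ for some $\matQ_k\in\mathcal{O}(r_k)$, and then rewrites $\dot{\tensG}\times_{k=1}^d\matu_k+\sum_k\tensG\times_k\dot{\matu}_k\times_{j\neq k}\matu_j$ directly in the form~\eqref{eq: Tucker tangent cone}. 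Your curve argument is shorter and does not depend on the explicit parametrization of the tangent cone at all; the paper's rewriting is more informative in that it exhibits precisely which element of the cone each $\varphi(\dot x)$ corresponds to, which may be useful if one later wants quantitative comparisons between the two descriptions.
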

\begin{proof}
    Denote $\underline{\vecr}=\ranktc(\tensX)\leq\vecr$ and recall the parametrization $(\tensG,\matu_1,\matu_2,\dots,\matu_d)$ for $x$.    Given $\tensV\in\varphi(\tangent_x\!\tensM)$, it can be represented by $\tensV=\dot{\tensG}\times_{k=1}^d\matu_k+\sum_{k=1}^d\tensG\times_k\dot{\matu}_k\times_{j\neq k}\matu_j$ with $\dot{\matu}_k^\top\matu_k^{}=0$. In order to reformulate $\tensV$ to the form of~\eqref{eq: Tucker tangent cone}, we consider the thin Tucker decomposition $\tensX=\underline{\tensG}\times_{k=1}^d\underline{\matu}_k$ of $\tensX$, where $\underline{\tensG}\in\mathbb{R}^{\underline{r}_1\times\underline{r}_2\times\cdots\times\underline{r}_d}$ and $\underline{\matu}_k\in\St(\underline{r}_k,n_k)$. Since $\tensX\in\otimes_{k=1}^d\Span(\matu_k)$, there exist $\matu_{k,1}\in\St(r_k-\underline{r}_k,n_k)$ and $\matQ_k\in\mathcal{O}(r_k)$, such that $\matu_k=[\underline{\matu}_k\ \matu_{k,1}]\matQ_k$ for $k\in[d]$. Therefore, it holds that 
    \begin{equation*}
        \begin{aligned}
            \tensV&=(\dot{\tensG}\times_{k=1}^d\matQ_k)\times_{k=1}^d[\underline{\matu}_k\ \matu_{k,1}]+\sum_{k=1}^d\tensG\times_k\dot{\matu}_k\times_{j\neq k}\matu_j\\
            &=(\dot{\tensG}\times_{k=1}^d\matQ_k)\times_{k=1}^d[\underline{\matu}_k\ \matu_{k,1}]+\sum_{k=1}^d\underline{\tensG}\times_k(\dot{\matu}_k^{}\matu_k^\top\underline{\matu}_k^{})\times_{j\neq k}\underline{\matu}_j,
        \end{aligned}
    \end{equation*}
    which is in the form of~\eqref{eq: Tucker tangent cone}. 
    
    Subsequently, for all $\tensV\in\tangent_{\tensX}\!\mathbb{R}^{n_1\times n_2\times\cdots\times n_d}_{\leq\vecr}$, we aim to show that there exists $\tilde{\matP}_k\in\Gr(n_k-r_k,n_k)$, such that $\tilde{x}:=(\tensX,\tilde{\matP}_1,\tilde{\matP}_2,\dots,\tilde{\matP}_d)\in\tensM$ and $\tensV\in\varphi(\tangent_{\tilde{x}}\!\tensM)$. Consider the thin Tucker decomposition $\tensX=\underline{\tensG}\times_{k=1}^d\underline{\matu}_k$. The vector $\tensV$ can be represented by 
    \begin{equation*}
        \begin{aligned}
            \tensV&=\tensC\times_{k=1}^d[\underline{\matu}_k\ \matu_{k,1}]+\sum_{k=1}^d\underline{\tensG}\times_k({\matu}_{k,2}\matR_{k,2})\times_{j\neq k}\underline{\matu}_j\\
            &=\tensC\times_{k=1}^d[\underline{\matu}_k\ \matu_{k,1}]+\sum_{k=1}^d\tilde{\tensG}\times_k[{\matu}_{k,2}\matR_{k,2}\ 0]\times_{j\neq k}[\underline{\matu}_j\ \matu_{j,1}]\\
            &=\tensC\times_{k=1}^d\tilde{\matu}_k+\sum_{k=1}^d\tilde{\tensG}\times_k\dot{\matu}_k\times_{j\neq k}\tilde{\matu}_j,
        \end{aligned}
    \end{equation*}
    where $\tilde{\tensG}\in\mathbb{R}^{r_1\times r_2\times\cdots\times r_d}$ satisfies $\tilde{\tensG}(i_1,i_2,\dots,i_d)=\underline{\tensG}(i_1,i_2,\dots,i_d)$ if $i_k\in[\underline{r}_k]$ and $\tilde{\tensG}(i_1,i_2,\dots,i_d)=0$ otherwise, $\dot{\matu}_k=[{\matu}_{k,2}\matR_{k,2}\ 0]$ and $\tilde{\matu}_k=[\underline{\matu}_k\ \matu_{k,1}]$. Let $\tilde{\matP}_k=\matI_{n_k}-\tilde{\matu}_k^{}\tilde{\matu}_k^\top$. Since $\tensX=\tilde{\tensG}\times_{k=1}^d\tilde{\matu}_k$ and $\dot{\matu}_k^\top\tilde{\matu}_k^{}=0$, we have $\tilde{x}\in\tensM$ and $\tensV\in\varphi(\tangent_{\tilde{x}}\!\tensM)$.  \hfill\squareforqed
\end{proof}

It is worth noting that on the one hand, the inclusion in~\eqref{eq: inclusion of tangent space and tangent cone} can be strict. For instance, let $\tensG=0$ and $\matu_k\in\St(r_k,n_k)$ be arbitrary. We observe that $\varphi(\tangent_x\!\tensM)=\otimes_{k=1}^d\Span(\matu_k)\subsetneq\tangent_{\tensX}\!\mathbb{R}^{n_1\times n_2\times\cdots\times n_d}_{\leq\vecr}$. On the other hand, the inclusion can be equality if $\tensX\in\mathbb{R}^{n_1\times n_2\times\cdots\times n_d}_{\vecr}$, i.e.,
\[\varphi(\tangent_x\!\tensM)=\tangent_{\tensX}\!\mathbb{R}^{n_1\times n_2\times\cdots\times n_d}_{\vecr}.\]
The rationale behind this is that $\varphi^{-1}(\tensX)$ is unique for all $\tensX\in\mathbb{R}^{n_1\times n_2\times\cdots\times n_d}_{\vecr}$.

We show that the tangent space $\tangent_x\!\tensM$ in~\eqref{eq: desingularization tangent space} can be constructed through the tensorized tangent spaces of desingularization of matrix varieties in~\eqref{eq: tangent space for matrix desing}. 
\begin{proposition}
    Given $x=(\tensX,\matP_1,\dots,\matP_d)\in\tensM$ and parameters $(\tensG,\matu_1,\dots,\matu_d)$, the tangent space $\tangent_x\!\tensM$ can be expressed by
    \[\tangent_x\!\tensM=\{\dot{x}\in\tensE:\proj_k(\dot{x})\in\tangent_{\proj_k(x)}\!\tensM(\mathbb{R}^{n_{-k}\times n_k},r_k),\ k\in[d]\}.\]
\end{proposition}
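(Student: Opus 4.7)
My plan is to prove both inclusions between the two linear subspaces of $\tensE$, using the explicit parametrizations already in hand.

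For the inclusion $\tangent_x\!\tensM \subseteq \{\dot{x}\in\tensE:\proj_k(\dot{x})\in\tangent_{\proj_k(x)}\!\tensM(\mathbb{R}^{n_{-k}\times n_k},r_k)\}$, I would use a smooth curve argument: given $\dot{x}\in\tangent_x\!\tensM$, pick a smooth curve $t\mapsto(\tensX(t),\matP_1(t),\dots,\matP_d(t))$ in $\tensM$ through $x$ with derivative $\dot{x}$ at $t=0$. Since the defining relations $\tensX(t)\times_k\matP_k(t)=0$ are equivalent, after mode-$k$ unfolding and transposition, to $\matx_{(k)}(t)^\top\matP_k(t)=0$, and $\matP_k(t)\in\Gr(n_k-r_k,n_k)$, the curve $t\mapsto(\matx_{(k)}(t)^\top,\matP_k(t))$ lies in $\tensM(\mathbb{R}^{n_{-k}\times n_k},r_k)$. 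Since $\proj_k$ is linear, differentiating at $t=0$ gives $\proj_k(\dot{x})\in\tangent_{\proj_k(x)}\!\tensM(\mathbb{R}^{n_{-k}\times n_k},r_k)$.

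For the reverse inclusion, I would work directly with the parametrization \eqref{eq: tangent space for matrix desing} applied in each mode. Writing $\matx_{(k)}^\top=\matv_k\matG_{(k)}^\top\matu_k^\top$ so that $\matu_k$ plays the role of the ``$\matv$'' in \eqref{eq: tangent space for matrix desing}, the tangent condition on $\proj_k(\dot{x})=(\dot{\matx}_{(k)}^\top,\dot{\matP}_k)$ yields, for each $k$, a unique $\dot{\matu}_k\in\mathbb{R}^{n_k\times r_k}$ with $\dot{\matu}_k^\top\matu_k=0$ such that $\dot{\matP}_k=-\dot{\matu}_k\matu_k^\top-\matu_k\dot{\matu}_k^\top$, together with the relation $\matP_k\dot{\matx}_{(k)}=\dot{\matu}_k\matG_{(k)}\matv_k^\top$. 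These $\dot{\matu}_k$ are the candidates for the parameters in \eqref{eq: desingularization tangent space}. I would then define $\dot{\tensG}:=\dot{\tensX}\times_{k=1}^d\matu_k^\top$ and set $\tensA:=\dot{\tensX}-\sum_\ell\tensG\times_\ell\dot{\matu}_\ell\times_{j\neq\ell}\matu_j$.

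The key step is to verify that $\matP_k\matA_{(k)}=0$ for every $k$. Unfolding the $\ell=k$ summand gives $\dot{\matu}_k\matG_{(k)}\matv_k^\top$, which matches $\matP_k\dot{\matx}_{(k)}$ exactly (using $\matP_k\dot{\matu}_k=\dot{\matu}_k$), while the $\ell\neq k$ summands have mode-$k$ unfolding of the form $\matu_k\matG_{(k)}\hat{\matv}_{k,\ell}^\top$ which is annihilated by $\matP_k$. Hence $\tensA\in\bigotimes_{k=1}^d\Span(\matu_k)$, which forces $\tensA=\dot{\tensG}\times_{k=1}^d\matu_k$ (the cross terms cancel when contracting by all $\matu_k^\top$ since $\matu_k^\top\dot{\matu}_k=0$). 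Substituting back recovers exactly the parametrization in \eqref{eq: desingularization tangent space}, so $\dot{x}\in\tangent_x\!\tensM$. The main obstacle I anticipate is this last algebraic bookkeeping: one must carefully exploit the orthogonality relations $\matu_k^\top\dot{\matu}_k=0$ and $\matP_k\matu_k=0$ mode by mode to show that the mode-wise constraints assemble consistently into a single global $\dot{\tensG}$, which is exactly what makes the intersection characterization equal to the tangent space and not merely contain it.
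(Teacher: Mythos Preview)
Your proposal is correct and follows essentially the same strategy as the paper: both directions hinge on showing, mode by mode, that after subtracting the $\sum_k\tensG\times_k\dot{\matu}_k\times_{j\neq k}\matu_j$ terms the residual tensor lies in $\bigotimes_{k=1}^d\Span(\matu_k)$, whence a unique $\dot{\tensG}$ exists. Two minor differences: for the forward inclusion you use a curve argument whereas the paper computes $\proj_k(\dot{x})$ explicitly from the parametrization~\eqref{eq: desingularization tangent space}; for the reverse inclusion the paper introduces an SVD $\matG_{(k)}=\tilde{\matu}_k\Sigma_k\tilde{\matv}_k^\top$ so that formula~\eqref{eq: tangent space for matrix desing} applies literally, while you bypass the SVD by using directly the relation $\matP_k\dot{\matx}_{(k)}=\dot{\matu}_k\matG_{(k)}\matv_k^\top$ (which follows from~\eqref{eq: tangent space for matrix desing} via $\matu\Sigma=\matx_{(k)}^\top\matu_k=\matv_k\matG_{(k)}^\top$). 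Your route is slightly cleaner, but the substance is the same.
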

\begin{proof}
    The proof sketch is similar to~\cite[Theorem 1]{gao2023low}. On the one hand, given $\dot{x}\in\tangent_x\!\tensM$, we observe that $\proj_k(\dot{x})=(\matv_k^{}\matG_{(k)}^\top\dot{\matu}_k^\top+\tilde{\matR}_k^{}\matu_k^\top,-2\sym(\dot{\matu}_k\matu_k^\top))$ for some $\tilde{\matR}_k\in\mathbb{R}^{n_{-k}\times r_k}$. Therefore, $\proj_k(\dot{x})\in\tangent_{\proj_k(x)}\!\tensM(\mathbb{R}^{n_{-k}\times n_k},r_k)$ by~\eqref{eq: tangent space for matrix desing}. 

    On the other hand, given $\dot{x}\in\tensE$ satisfying $\proj_k(\dot{x})\in\tangent_{\proj_k(x)}\!\tensM(\mathbb{R}^{n_{-k}\times n_k},r_k)$ for all $k\in[d]$, we aim to show that $\dot{x}\in\tangent_x\!\tensM$. We decompose $\matx_{(k)}$ by
    \[\matx_{(k)}=\matu_k\matG_{(k)}\!\matv_k^\top=\matu_k\tilde{\matu}_k\Sigma_k\tilde{\matv}_k^\top\matv_k^\top,\] 
    where $\matG_{(k)}=\tilde{\matu}_k\Sigma_k\tilde{\matv}_k^\top$ is a SVD with $\tilde{\matu}_k\in\mathcal{O}(r_k)$, $\Sigma_k\in\mathbb{R}^{r_k\times r_k}$ and $\tilde{\matv}_k\in\St(r_k,r_{-k})$. Therefore, the tangent vector $\dot{\matX}_{(k)}^\top$ can be represented by 
        \begin{align}
            \dot{\matX}_{(k)}^\top&=\matR_k^{}(\matu_k\tilde{\matu}_k)^\top+(\matv_k\tilde{\matv}_k)\Sigma_k\dot{\matu}_k^\top\nonumber\\
            &=\matR_k^{}\tilde{\matu}_k^\top\matu_k^\top+\matv_k\tilde{\matv}_k\Sigma_k\tilde{\matu}_k^\top\tilde{\matu}_k^{}\dot{\matu}_k^\top\nonumber\\
            &=\matR_k^{}\tilde{\matu}_k^\top\matu_k^\top+\matv_k\matG_{(k)}^\top\tilde{\matu}_k^{}\dot{\matu}_k^\top\label{eq: representation in matrix}
        \end{align}
    with some $\matR_k\in\mathbb{R}^{n_{-k}\times r_k}$ and $\dot{\matu}_k\in\mathbb{R}^{n_k\times r_k}$ satisfying $\dot{\matu}_k^\top\matu_k^{}=0$. It follows from~\eqref{eq: representation in matrix} that $\dot{\matX}_{(k)}-\dot{\matu}_k^{}\tilde{\matu}_k^\top\matG_{(k)}^{}\!\matv_k^\top=\matu_k^{}\tilde{\matu}_k^{}\matR_k^\top$, i.e., 
    \[\Span(\dot{\matX}_{(k)}-\dot{\matu}_k^{}\tilde{\matu}_k^\top\matG_{(k)}^{}\matv_k^\top)\subseteq\Span(\matu_k).\] 
    Since $\ten_{(k)}(\dot{\matu}_k^{}\tilde{\matu}_k^\top\matG_{(k)}^{}\!\matv_k^\top)=\tensG\times_k(\dot{\matu}_k^{}\tilde{\matu}_k^\top)\times_{j\neq k}\matu_j$, we obtain that 
    \[\dot{\tensX}-\sum_{k=1}^d\tensG\times_k(\dot{\matu}_k^{}\tilde{\matu}_k^\top)\times_{j\neq k}\matu_j\in\bigotimes_{k=1}^d\Span(\matu_k)\]
    by mode-$k$ unfolding matrices, i.e., there exists $\dot{\tensG}\in\mathbb{R}^{r_1\times r_2\times\cdots\times r_d}$, such that
    \[\dot{\tensX}=\dot{\tensG}\times_{k=1}^d\matu_k+\sum_{k=1}^d\tensG\times_k(\dot{\matu}_k^{}\tilde{\matu}_k^\top)\times_{j\neq k}\matu_j.\]
    Consequently, $\dot{x}\in\tangent_x\!\tensM$ can be represented by $(\dot{\tensG},\dot{\matu}_1^{}\tilde{\matu}_1^\top,\dot{\matu}_2^{}\tilde{\matu}_2^\top,\dots,\dot{\matu}_d^{}\tilde{\matu}_d^\top)$. \hfill\squareforqed
\end{proof}

\section{Optimization via desingularization: methods and stationary points}\label{sec: optim desing}
By using the desingularization of Tucker tensor varieties in section~\ref{sec: desing}, we recast the problem~\eqref{eq: problem (P)} to the following parametrized problem on the smooth manifold $\tensM=\tensM(\mathbb{R}^{n_1\times n_2\times\cdots\times n_d},\vecr)$,
\begin{equation}
    \label{eq: problem (Q-desing)}
    \tag{Q-desing}
    \min_{x} g(x)=f(\varphi(x)),\quad\subjectto\ x\in\tensM.
\end{equation} 
We refer to~\eqref{eq: problem (Q-desing)} as a desingularized problem. Since $\tensM$ enjoys manifold structure, one can adopt the Riemannian methods to solve~\eqref{eq: problem (Q-desing)}; see~\cite{absil2009optimization,boumal2023intromanifolds} for an overview. When the problem reduces to the matrix case, i.e., $d=2$, \eqref{eq: problem (Q-desing)} is a new parametrization for optimization on matrix varieties since $\tensM(\mathbb{R}^{n_1\times n_2},\vecr)$ in~\eqref{eq: desingularization of Tucker} is essentially different from $\tensM(\mathbb{R}^{n_1\times n_2},r)$ in~\eqref{eq: matrix desing}.

In this section, we propose Riemannian methods for solving~\eqref{eq: problem (Q-desing)}, and provide efficient implementations for these methods. Additionally, we analyze the optimality conditions of~\eqref{eq: problem (Q-desing)}, and investigate the connection of stationary points of the low-rank Tucker tensor optimization problems~\eqref{eq: problem (P)} and~\eqref{eq: problem (Q-desing)}. We demonstrate that $\varphi(x)$ is not necessarily to be stationary of~\eqref{eq: problem (P)} even if $x$ is a second-order stationary point of~\eqref{eq: problem (Q-desing)}. We also consider a parametrization using Tucker decomposition, where the Tucker tensor varieties is parametrized by
\begin{equation}\label{eq: Tucker parametrization}
    \begin{aligned}
        &\tensM^\mathrm{Tucker}=\mathbb{R}^{r_1\times r_2\times\cdots\times r_d}\times\mathbb{R}^{n_1\times r_1}\times\mathbb{R}^{n_2\times r_2}\times\cdots\times\mathbb{R}^{n_d\times r_d},\\
        \tilde{\varphi}:~&\tensM^\mathrm{Tucker}\to\mathbb{R}^{n_1\times n_2\times\dots\times n_d}_{\leq\vecr},\qquad\tilde{\varphi}(\tilde{\tensG},\tilde{\matu}_1,\tilde{\matu}_2,\dots,\tilde{\matu}_d)=\tilde{\tensG}\times_{k=1}^d\tilde{\matu}_k.
    \end{aligned}
\end{equation}
We illustrate the connection of the sets of first- and second-order stationary points of~\eqref{eq: problem (Q)} under two parametrizations: desingularization and Tucker parametrization.

\subsection{Optimization methods via desingularization}
Riemannian gradient descent method, Riemannian conjugate gradient method and Riemannian trust-region method for~\eqref{eq: problem (Q-desing)} are listed in Algorithms~\ref{alg: RGD-desing}--\ref{alg: RTR-desing} respectively. In contrast with the manifold of fixed-rank Tucker tensors $\mathbb{R}^{n_1\times \cdots\times n_d}_{\vecr}$, the proposed manifold $\tensM$ is closed. Therefore, the standard convergence analysis of Riemannian methods is applicable, and interested readers are referred to~\cite{boumal2019global,sato2022riemannian} for details.

\begin{algorithm}[htbp]
    \caption{Riemannian gradient descent method for desingularization (RGD-desing)}
    \label{alg: RGD-desing}
    \begin{algorithmic}[1]
        \REQUIRE Initial guess $x^{(0)}\in\tensM$ with representation $(\tensG^{(0)},\matu_1^{(0)},\dots,\matu_d^{(0)})$, $t=0$.
        \WHILE{the stopping criteria are not satisfied}
        \STATE Compute parameters $(\dot{\tensG}^{(t)},\dot{\matu}_1^{(t)},\dots,\dot{\matu}_d^{(t)})$ of $\eta^{(t)}=-\grad f(x^{(t)})$ by~\eqref{eq: Riemannian gradient desing}.
        \STATE Choose stepsize $s^{(t)}$.
        \STATE Update the parameters $({\tensG}^{(t+1)},{\matu}_1^{(t+1)},\dots,{\matu}_d^{(t+1)})$ of $x^{(t+1)}=\retr_{x^{(t)}}(s^{(t)}\eta^{(t)})$ by~\eqref{eq: desingularization retraction}; $t=t+1$.
        \ENDWHILE
        \ENSURE The parameters $({\tensG}^{(t)},{\matu}_1^{(t)},\dots,{\matu}_d^{(t)})$ of $x^{(t)}$.
    \end{algorithmic}
\end{algorithm}

\begin{algorithm}[htbp]
    \caption{Riemannian conjugate gradient method for desingularization (RCG-desing)}
    \label{alg: RCG-desing}
    \begin{algorithmic}[1]
        \REQUIRE Initial guess $x^{(0)}\in\tensM$ with representation $(\tensG^{(0)},\matu_1^{(0)},\dots,\matu_d^{(0)})$, $t=0$, $\beta^{(0)}=0$.
        \WHILE{the stopping criteria are not satisfied}
        \STATE Compute parameters $(\dot{\tensG}^{(t)},\dot{\matu}_1^{(t)},\dots,\dot{\matu}_d^{(t)})$ of $\eta^{(t)}=-\grad f(x^{(t)})+\beta^{(t)}\tensT_{t\gets t-1}\eta^{(t-1)}$ by~\eqref{eq: projection onto tangent} and~\eqref{eq: Riemannian gradient desing}, where $\beta^{(t)}$ is a conjugate gradient parameter and $\tensT_{t\gets t-1}:\tangent_{x^{(t-1)}}\!\tensM\to\tangent_{x^{(t)}}\!\tensM$ is a vector transport. 
        \STATE Choose stepsize $s^{(t)}$.
        \STATE Update the parameters $({\tensG}^{(t+1)},{\matu}_1^{(t+1)},\dots,{\matu}_d^{(t+1)})$ of $x^{(t+1)}=\retr_{x^{(t)}}(s^{(t)}\eta^{(t)})$ by~\eqref{eq: desingularization retraction}; $t=t+1$.
        \ENDWHILE
        \ENSURE The parameters $({\tensG}^{(t)},{\matu}_1^{(t)},\dots,{\matu}_d^{(t)})$ of $x^{(t)}$.
    \end{algorithmic}
\end{algorithm}

\begin{algorithm}[htbp]
    \caption{Riemannian trust-region method for desingularization (RTR-desing)}
    \label{alg: RTR-desing}
    \begin{algorithmic}[1]
        \REQUIRE Initial guess $x^{(0)}\in\tensM$ with representation $(\tensG^{(0)},\matu_1^{(0)},\dots,\matu_d^{(0)})$, $t=0$, $\Delta^{(0)}$, $\rho^\prime\in(0,1/4)$, maximum radius $\bar{\Delta}>0$.
        \WHILE{the stopping criteria are not satisfied}
        \STATE Compute parameters $(\dot{\tensG}^{(t)},\dot{\matu}_1^{(t)},\dots,\dot{\matu}_d^{(t)})$ of $\eta^{(t)}$ by (approximately) solving
        \begin{equation}
            \label{eq: RTR subproblem}\min_{\eta\in\tangent_{x^{(t)}}\!\tensM}\ \ m^{(t)}(\eta)=\langle\eta,\grad g(x^{(t)})\rangle_{x^{(t)}}+\frac12\langle\eta,\Hess\!g(x^{(t)})[\eta]\rangle_{x^{(t)}},\ \ \subjectto\ \|\eta\|_{x^{(t)}}\leq\Delta^{(t)}.
        \end{equation} 
        \STATE Update the parameters $({\tensG}^+,{\matu}_1^+,\dots,{\matu}_d^+)$ of $x^+= \retr_{x^{(t)}}(s^{(t)} \eta^{(t)})$.
        \STATE Compute $\rho^{(t)}=(f(x^{(t)})-f(x^+))/(m^{(t)}(0)-m^{(t)}(\eta^{(t)}))$.
        \STATE Set $x^{(t+1)}=x^+$ if $\rho\geq\rho^\prime$; otherwise $x^{(t+1)}=x^{(t)}$.
        \STATE Adjust radius $\Delta^{(t+1)}=\begin{cases}
                \Delta^{(t)}/4,\ \text{if}\ \rho^{(t)}<1/4,\\
                \min\{2\Delta^{(t)},\bar{\Delta}\},\ \text{if}\ \rho^{(t)}>3/4\ \text{and}\ \|\eta\|_{x^{(t)}}=\Delta^{(t)},\\
                \Delta^{(t)},\ \text{otherwise};\\
            \end{cases}$ \ \ $t=t+1$.
        \ENDWHILE
        \ENSURE The parameters $({\tensG}^{(t)},{\matu}_1^{(t)},\dots,{\matu}_d^{(t)})$ of $x^{(t)}$.
    \end{algorithmic}
\end{algorithm}

In practice, Algorithms~\ref{alg: RGD-desing}--\ref{alg: RTR-desing} operate directly on the parameters of $x^{(t)}$ and $\eta^{(t)}$, allowing for more efficient computations and avoiding the need to handle large-scale matrices and tensors explicitly. Recall that $x^{(t)}=(\tensX^{(t)},\matP_1^{(t)},\dots,\matP_d^{(t)})\in\tensM$ and $\eta^{(t)}=(\dot{\tensX}^{(t)},\dot{\matP}_1^{(t)},\dots,\dot{\matP}_d^{(t)})$ can be represented by $(\tensG^{(t)},\matu_1^{(t)},\dots,\matu_d^{(t)})$ and $(\dot{\tensG}^{(t)},\dot{\matu}_1^{(t)},\dots,\dot{\matu}_d^{(t)})$ respectively. We set the vector transport $\tensT_{t\gets t-1}$ in Algorithm~\ref{alg: RCG-desing} as the projection operator $\proj_{\tangent_{x^{(t)}}\!\tensM}$. Efficient computations of Riemannian gradient and projections immediately follow from~\eqref{eq: projection onto tangent} and~\eqref{eq: Riemannian gradient desing}. For the computation of retraction $\retr_{x^{(t)}}(s^{(t)}\eta^{(t)})$ in~\eqref{eq: desingularization retraction}, we first compute $\matu_k^{(t+1)}$ by QR decomposition of $\matu_k^{(t)}+s^{(t)}\dot{\matu}_k^{(t)}$. Then, we observe that
\begin{equation*}
    \begin{aligned}
        \tensX^{(t)}+s^{(t)}\dot{\tensX}^{(t)}
        &=(\tensG^{(t)} + s^{(t)}\dot{\tensG}^{(t)})\times_{k=1}^d\matu_k^{(t)}+s^{(t)}\sum_{k=1}^d\tensG\times_k\dot{\matu}_k^{(t)}\times_{j\neq k}\matu_j^{(t)}\\
        &=\tilde{\tensG}\times_{k=1}^d[\matu_k^{(t)}\ s^{(t)}\dot{\matu}_k^{(t)}]
    \end{aligned}
\end{equation*}
with some $\tilde{\tensG}\in\mathbb{R}^{(2r_1)\times(2r_2)\times\cdots\times(2r_d)}$. Subsequently, we have
\begin{equation*}
    \begin{aligned}
        (\tensX^{(t)}+s^{(t)}\dot{\tensX}^{(t)})\times_{k=1}^d\proj_{\matu_k^{(t+1)}}
        &=\big(\tilde{\tensG}\times_{k=1}^d((\matu_k^{(t+1)})^\top[\matu_k^{(t)}\ s^{(t)}\dot{\matu}_k^{(t)}])\big)\times_{k=1}^d \matu_k^{(t+1)}\\
        &={\tensG}^{(t+1)}\times_{k=1}^d \matu_k^{(t+1)}
    \end{aligned}
\end{equation*}
and thus $x^{(t+1)}=\retr_{x^{(t)}}(s^{(t)}\eta^{(t)})$ is represented by $(\tensG^{(t+1)},{\matu}_1^{(t+1)},\dots,{\matu}_d^{(t+1)})$. 

For Riemannian trust-region method in Algorithm~\ref{alg: RTR-desing}, the subproblem~\eqref{eq: RTR subproblem} can be approximately solved by the truncated conjugate gradient method; see~\cite{absil2007trust} for details. The inner product in~\eqref{eq: RTR subproblem} can be efficiently computed by~\eqref{eq: inner product desing}.

\subsection{Optimality conditions}
We analyze the optimality conditions of~\eqref{eq: problem (Q-desing)}. A point $x\in\tensM$ is a stationary point of $g=f\circ\varphi$ if $\grad g(x)=0$. Furthermore, a stationary point $x\in\tensM$ of $g$ is called second-order stationary of $g$ if $\Hess\!g(x)$ is positive semi-definite.
\begin{proposition}[first-order optimality]\label{prop: 1st-order optimality}
    A point $x=(\tensX,\matP_1,\matP_2,\dots,\matP_d)\in\tensM$ represented by $(\tensG,\matu_1,\matu_2,\dots,\matu_d)$ is a stationary point of~$g$ if 
    \[\nabla f(\tensX)\times_{k=1}^d\proj_{\matu_k}=0\quad\text{and}\quad(\nabla f(\tensX))_{(k)}\matv_k\matG_{(k)}^\top=0\]
    for $k\in[d]$, where $\matv_k=(\matu_j)^{\otimes j\neq k}$.
\end{proposition}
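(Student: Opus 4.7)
The plan is to argue directly from the explicit formula for the Riemannian gradient derived earlier, since by definition $x$ is stationary for $g$ iff $\grad g(x)=0$ in $\tangent_x\!\tensM$. Recall from the corollary following Proposition~\ref{prop: proj onto tangent} that $\grad g(x)$ is the tangent vector represented by the parameters
\begin{equation*}
\tilde{\tensG}=\nabla f(\tensX)\times_{k=1}^d\matu_k^\top,\qquad \tilde{\matu}_k=\matP_k(\nabla f(\tensX))_{(k)}\matv_k\matG_{(k)}^\top(2\matI_{r_k}+\matG_{(k)}^{}\matG_{(k)}^\top)^{-1}.
\end{equation*}
Because the parametrization $(\dot{\tensG},\dot{\matu}_1,\dots,\dot{\matu}_d)$ of tangent vectors in~\eqref{eq: desingularization tangent space} is bijective (the constraint is only $\dot{\matu}_k^\top\matu_k=0$, which $\tilde{\matu}_k$ satisfies automatically because of the factor $\matP_k$), I have $\grad g(x)=0$ if and only if $\tilde{\tensG}=0$ and $\tilde{\matu}_k=0$ for every $k\in[d]$.

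First I would translate the core condition $\tilde{\tensG}=0$. Since each $\matu_k$ has orthonormal columns, the mapping $\tensY\mapsto \tensY\times_{k=1}^d\proj_{\matu_k}=(\tensY\times_{k=1}^d\matu_k^\top)\times_{k=1}^d\matu_k$ vanishes iff $\tensY\times_{k=1}^d\matu_k^\top=0$. Hence $\tilde{\tensG}=0$ is equivalent to $\nabla f(\tensX)\times_{k=1}^d\proj_{\matu_k}=0$, yielding the first condition in the proposition. Next I would handle $\tilde{\matu}_k=0$. The factor $(2\matI_{r_k}+\matG_{(k)}\matG_{(k)}^\top)^{-1}$ is invertible, so $\tilde{\matu}_k=0$ reduces to $\matP_k(\nabla f(\tensX))_{(k)}\matv_k\matG_{(k)}^\top=0$.

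The main step — and the one that is slightly more than bookkeeping — is removing the projector $\matP_k$ to arrive at the cleaner condition $(\nabla f(\tensX))_{(k)}\matv_k\matG_{(k)}^\top=0$. To do this I would use the first condition. Unfolding $\nabla f(\tensX)\times_{k=1}^d\matu_k^\top=0$ along mode $k$ gives $\matu_k^\top(\nabla f(\tensX))_{(k)}\matv_k=0$, so $\matu_k^{}\matu_k^\top(\nabla f(\tensX))_{(k)}\matv_k\matG_{(k)}^\top=0$. Adding this to $\matP_k(\nabla f(\tensX))_{(k)}\matv_k\matG_{(k)}^\top=0$ and using $\matP_k+\matu_k^{}\matu_k^\top=\matI_{n_k}$ yields $(\nabla f(\tensX))_{(k)}\matv_k\matG_{(k)}^\top=0$, which is the second condition. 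Conversely, this second condition trivially implies its $\matP_k$-projected version, so the two sets of conditions are equivalent and characterize stationarity precisely.
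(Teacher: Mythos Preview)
Your proof is correct and follows essentially the same route as the paper: both arguments reduce $\grad g(x)=0$ to $\tilde{\tensG}=0$ and $\tilde{\matu}_k=0$, then perform the identical key step of using the mode-$k$ unfolding of $\nabla f(\tensX)\times_{j=1}^d\matu_j^\top=0$ together with $\matP_k+\matu_k\matu_k^\top=\matI_{n_k}$ to strip the projector $\matP_k$. The only minor presentational difference is that you invoke the bijectivity of the parameter representation of $\tangent_x\!\tensM$ directly, whereas the paper works in the ambient coordinates and recovers $\tilde{\matu}_k=0$ from $\tilde{\matP}_k=0$ via $\tilde{\matu}_k=-\matu_k\tilde{\matu}_k^\top\matu_k=0$; both are equivalent.
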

\begin{proof}
    Recall the Riemannian gradient~\eqref{eq: Riemannian gradient desing}.
    Since $\tilde{\matP}_k=0$ for $k\in[d]$, we have $\tilde{\matu}_k=-\matu_k^{}\tilde{\matu}_k^\top\matu_k^{}=0$. Therefore, it holds that 
    \[\nabla f(\tensX)\times_{k=1}^d\proj_{\matu_k}=0\quad\text{and}\quad\matP_k(\nabla f(\tensX))_{(k)}\matv_k\matG_{(k)}^\top=0.\]
    By using the {mode-$k$} unfolding $\matu_k^\top(\nabla f(\tensX))_{(k)}\matv_k=0$, we further simplify the equality to $(\nabla f(\tensX))_{(k)}\matv_k\matG_{(k)}^\top=(\matP_k+\matu_k^{}\matu_k^\top)(\nabla f(\tensX))_{(k)}\matv_k\matG_{(k)}^\top=0$. \hfill\squareforqed
\end{proof}

\begin{proposition}[second-order optimality]\label{prop: 2nd-order optimality}
    A first-order stationary point $x=(\tensX,\matP_1,\matP_2,\dots,\matP_d)\in\tensM$ represented by $(\tensG,\matu_1,\dots,\matu_d)$ is a second-order stationary point of $g$ if
    \[\langle\nabla^2f(\tensX)[\dot{\tensX}],\dot{\tensX}\rangle+\sum_{k=1}^{d}\langle\nabla f(\tensX),\sum_{j\neq k}\tensG\times_k\dot{\matu}_k\times_j\dot{\matu}_j\times_{\ell\notin\{j,k\}}\matu_\ell\rangle\geq 0\]
    holds for all $\dot{x}\in\tangent_x\!\tensM$ represented by $(\dot{\tensG},\dot{\matu}_1,\dots,\dot{\matu}_d)$, where $\dot{\tensX}=\varphi(\dot{x})$.
\end{proposition}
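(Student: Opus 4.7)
The plan is to specialise the Riemannian Hessian expression derived in the preceding proposition and evaluate the quadratic form $\langle \Hess\!g(x)[\dot{x}],\dot{x}\rangle_x$ at a first-order stationary point; second-order stationarity is equivalent to this quadratic form being non-negative for every $\dot{x}\in\tangent_x\!\tensM$, so the task reduces to showing that the stationarity hypotheses collapse the Hessian formula to the expression displayed in the statement.

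First I would write $\Hess\!g(x)[\dot{x}]$ using its parameters $(\hat{\tensG}+\breve{\tensG},\hat{\matu}_1+\breve{\matu}_1,\dots,\hat{\matu}_d+\breve{\matu}_d)$ from~\eqref{eq: Riemannian Hessian}, and pair it with $\dot{x}$ via the inner-product formula~\eqref{eq: inner product desing}, obtaining $\langle\hat{\tensG}+\breve{\tensG},\dot{\tensG}\rangle+\sum_k\langle\hat{\matu}_k+\breve{\matu}_k,\dot{\matu}_k\tilde{\matF}_k\rangle$ with $\tilde{\matF}_k=2\matI_{r_k}+\matG_{(k)}\matG_{(k)}^\top$. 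Applying Proposition~\ref{prop: 1st-order optimality} immediately kills two pieces: the summand $\tensG\times_k\bigl(\dot{\matu}_k^\top(\nabla f(\tensX))_{(k)}\matv_k\matG_{(k)}^\top\tilde{\matF}_k^{-1}\bigr)$ inside $\hat{\tensG}$, because $(\nabla f(\tensX))_{(k)}\matv_k\matG_{(k)}^\top=0$; and the summand $-\matv_k\matG_{(k)}^\top\tilde{\matF}_k^{-1}\matG_{(k)}\dot{\matG}_{(k)}^\top$ inside $\hat{\matu}_k$, for the same reason after pre-multiplication by $\matP_k(\nabla f(\tensX))_{(k)}$.

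Next I would expand the surviving contributions mode by mode using the adjoint identity $\langle\tensA\times_k\matM^\top,\tensB\rangle=\langle\tensA,\tensB\times_k\matM\rangle$, the cyclic trace, and the orthogonality relations $\matu_k^\top\dot{\matu}_k=0$ and $\matP_k\dot{\matu}_k=\dot{\matu}_k$. The $\breve{}$-contributions reassemble into $\langle\nabla^2 f(\tensX)[\dot{\tensX}],\dot{\tensX}\rangle$ via the tangent-space parametrisation $\dot{\tensX}=\dot{\tensG}\times_{k=1}^d\matu_k+\sum_k\tensG\times_k\dot{\matu}_k\times_{j\neq k}\matu_j$. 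For the $\hat{}$-contributions, the crucial identity is that $\dot{\matu}_k\matG_{(k)}\dot{\matv}_k^\top$ coincides with the mode-$k$ unfolding of $\sum_{m\neq k}\tensG\times_k\dot{\matu}_k\times_m\dot{\matu}_m\times_{\ell\notin\{k,m\}}\matu_\ell$; this follows from the definition of $\dot{\matv}_k$ as the sum over $m\neq k$ of $\matv_k=(\matu_j)^{\otimes j\neq k}$ with $\matu_m$ replaced by $\dot{\matu}_m$. Invoking stationarity once more (through $\matu_k^\top(\nabla f(\tensX))_{(k)}\matv_k=0$) makes the $\dot{\tensG}$-linear cross terms arising from $\langle\hat{\tensG},\dot{\tensG}\rangle$ and from the $\matv_k\dot{\matG}_{(k)}^\top$ piece of $\hat{\matu}_k$ cancel, leaving exactly the nested double sum in the statement.

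The hard part will be the bookkeeping for $\dot{\matv}_k$: each reshape of $\dot{\matu}_k\tilde{\matF}_k$ has to be contracted with the correct Kronecker-substituted factors of $\matv_k$, and one needs $\matv_k^\top\dot{\matv}_k=0$ (a consequence of $\matu_j^\top\dot{\matu}_j=0$ for every $j\neq k$) to discard spurious cross terms. Once this correspondence is set up, the remainder is routine trace algebra that produces the displayed inequality.
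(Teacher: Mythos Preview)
Your plan matches the paper's proof almost exactly: simplify the Hessian parameters~\eqref{eq: Riemannian Hessian} via the first-order conditions of Proposition~\ref{prop: 1st-order optimality}, pair with $\dot{x}$ through~\eqref{eq: inner product desing}, and identify the $\breve{}$-terms with $\langle\nabla^2 f(\tensX)[\dot{\tensX}],\dot{\tensX}\rangle$ and the surviving $\hat{}$-term with the double sum via $\dot{\matu}_k\matG_{(k)}\dot{\matv}_k^\top=(\sum_{j\neq k}\tensG\times_k\dot{\matu}_k\times_j\dot{\matu}_j\times_{\ell\notin\{j,k\}}\matu_\ell)_{(k)}$. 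Two minor corrections to your narrative: the $\dot{\tensG}$-linear cross terms from $\langle\hat{\tensG},\dot{\tensG}\rangle$ and from the $\matv_k\dot{\matG}_{(k)}^\top$ piece of $\hat{\matu}_k$ cancel \emph{algebraically} (each equals $\pm\sum_k\langle\nabla f(\tensX),\dot{\tensG}\times_k\dot{\matu}_k\times_{j\neq k}\matu_j\rangle$), so no further appeal to stationarity is needed there; and the orthogonality $\matv_k^\top\dot{\matv}_k=0$, while true, is never invoked---the computation closes without it.
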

\begin{proof}
    If $x$ is second-order stationary of $g$, it holds that $\langle\Hess\!g(x)[\dot{x}],\dot{x}\rangle_x\geq 0$ for all $\dot{x}\in\tangent_x\!\tensM$, $\nabla f(\tensX)\times_{k=1}^d\proj_{\matu_k}=0$ and $(\nabla f(\tensX))_{(k)}\matv_k\matG_{(k)}^\top=0$ from Proposition~\ref{prop: 1st-order optimality}, where $\matv_k=(\matu_j)^{\otimes j\neq k}$. Recall the parameters $(\hat{\tensG},\hat{\matu}_1,\hat{\matu}_2,\dots,\hat{\matu}_d)$ and $(\breve{\tensG},\breve{\matu}_1,\breve{\matu}_2,\dots,\breve{\matu}_d)$ in~\eqref{eq: Riemannian Hessian}.
    It follows from the first-order optimality conditions that the parameters can be simplified to
    \begin{equation*}
        \begin{aligned}
            \hat{\tensG}&=-\sum_{k=1}^{d}\nabla f(\tensX)\times_k\dot{\matu}_k^\top\times_{j\neq k}\matu_j^\top,&
            \hat{\matu}_k&=\matP_k(\nabla f(\tensX))_{(k)}\!(\dot{\matv}_k\matG_{(k)}^\top+\matv_k\dot{\matG}_{(k)}^\top)\tilde{\matF}_k^{-1},\\
            \breve{\tensG}&=\nabla^2 f(\tensX)[\dot{\tensX}]\times_{k=1}^d\matu_k^\top,&
            \breve{\matu}_k&=\matP_k(\nabla^2 f(\tensX)[\dot{\tensX}])_{(k)}\!\matv_k\matG_{(k)}^\top\tilde{\matF}_k^{-1},
        \end{aligned}
    \end{equation*}
    where $\dot{\matv}_k\in\mathbb{R}^{n_{-k}\times r_{-k}}$ such that $\dot{\matx}_{(k)}=\dot{\matu}_k\matG_{(k)}\!\matv_k^\top+\matu_k\dot{\matG}_{(k)}\!\matv_k^\top+\matu_k\matG_{(k)}\!\dot{\matv}_k^\top$, and $\tilde{\matf}_k=2\matI_{r_k}+\matG_{(k)}^{}\matG_{(k)}^\top$. Therefore, we obtain from~\eqref{eq: inner product desing} that 
    \begin{equation*}
        \begin{aligned}
            &~~~~\langle\Hess\!g(x)[\dot{x}],\dot{x}\rangle_x\\
            &=\langle\hat{\tensG}+\breve{\tensG},\dot{\tensG}\rangle+\sum_{k=1}^{d}\langle\hat{\matu}_k+\breve{\matu}_k,\dot{\matu}_k(2\matI_{r_k}+\matG_{(k)}^{}\matG_{(k)}^\top)\rangle\\
            &=\langle\nabla^2 f(\tensX)[\dot{\tensX}]\times_{k=1}^d\matu_k^\top-\sum_{k=1}^{d}\nabla f(\tensX)\times_k\dot{\matu}_k^\top\times_{j\neq k}\matu_j^\top,\dot{\tensG}\rangle\\
            &~~~+\sum_{k=1}^{d}\Big\langle(\nabla f(\tensX))_{(k)}(\dot{\matv}_k\matG_{(k)}^\top+\matv_k\dot{\matG}_{(k)}^\top)+(\nabla^2 f(\tensX)[\dot{\tensX}])_{(k)}\matv_k\matG_{(k)}^\top,\matP_k\dot{\matu}_k\Big\rangle\\
            &=\big\langle\nabla^2 f(\tensX)[\dot{\tensX}],\dot{\tensG}\times_{k=1}^d\matu_k+\sum_{k=1}^d\tensG\times_k\matu_k\times_{j\neq k}\dot{\matu}_j\big\rangle\\
            &~~~+\sum_{k=1}^{d}\big\langle(\nabla f(\tensX))_{(k)},\dot{\matu}_k\matG_{(k)}\!\dot{\matv}_k^\top\big\rangle\\
            &=\langle\nabla^2f(\tensX)[\dot{\tensX}],\dot{\tensX}\rangle+\sum_{k=1}^{d}\langle\nabla f(\tensX),\sum_{j\neq k}\tensG\times_k\dot{\matu}_k\times_j\dot{\matu}_j\times_{\ell\notin\{j,k\}}\matu_\ell\rangle,
        \end{aligned}
    \end{equation*}
    where we use $\matP_k\dot{\matu}_k=\dot{\matu}_k$. \hfill\squareforqed
\end{proof}

\subsection{Difference between the desingularized and original problems}\label{subsec: counterexample}
The desingularization~\eqref{eq: desingularization of Tucker} provides a smooth parametrization of the non-smooth Tucker tensor variety $\mathbb{R}^{n_1\times n_2\times\cdots\times n_d}_{\leq\vecr}$, recasting the low-rank Tucker tensor optimization problem~\eqref{eq: problem (P)} to a Riemannian optimization problem~\eqref{eq: problem (Q-desing)}. However, the landscape of two problems is different. While the optimal values are equal, a stationary point $x\in\tensM$ of~\eqref{eq: problem (Q-desing)} does not necessarily imply that $\varphi(x)$ is a first-order stationary point of~\eqref{eq: problem (P)}. In the matrix case $d=2$, Levin et al.~\cite{levin2024effect} showed that a second-order stationary point $x$ of~\eqref{eq: problem (Q)} indicate that $\varphi(x)$ is a first-order stationary point of~\eqref{eq: problem (P)} for a class of parametrizations. However, this result does not extend to the tensor case.

Specifically, given $x=(\tensX,\matP_1,\matP_2,\dots,\matP_d)\in\tensM$ with $\underline{\vecr}:=\ranktc(\tensX)<\vecr$ represented by $(\tensG,\matu_1,\matu_2,\dots,\matu_d)$, we assume that there exists $k_0\in[d]$ such that $r_{k_0}<n_{k_0}$. We aim to show that there exists a smooth function $f$, such that $x$ is second-order stationary of~\eqref{eq: problem (Q-desing)} but $\varphi(x)$ is not stationary of~\eqref{eq: problem (P)}. 

To this end, we first construct a function $f$ and then validate the second-order optimality conditions for the given $x$. Consider the thin Tucker decomposition $\tensG=\underline{\tensG}\times_{k=1}^d\underline{\matu}_k$ of $\tensG$ with $\underline{\tensG}\in\mathbb{R}^{\underline{r}_1\times\underline{r}_2\times\cdots\times\underline{r}_d}$ and $\underline{\matu}_k\in\St(\underline{r}_k,r_k)$. If $r_k<n_k$, there exists $\vecv_k\in\mathbb{R}^{n_k}\setminus\{0\}$ such that $\matu_{k}^\top\vecv_k^{}=0$. Otherwise, since $\underline{r}_k<r_k=n_k$, there exists $\vecv_k\in\mathbb{R}^{n_k}\setminus\{0\}$ such that $(\matu_{k}\underline{\matu}_k)^\top\vecv_k^{}=0$. Let $f(\tensX) = \langle\tensX,\vecv_1\circ\vecv_2\circ\cdots\circ\vecv_d\rangle$. It follows from $\nabla f(\tensX)\times_{k_0}\matu_{k_0}^\top=0$ that $\nabla f(\tensX)\times_{k=1}^d\proj_{\matu_k}=0$ and
\[(\nabla f(\tensX))_{(k)}\matv_k\matG_{(k)}^\top=(\nabla f(\tensX)\times_{j\neq k}(\matu_j\underline{\matu}_j)^\top)_{(k)}\underline{\matG}_{(k)}^\top\underline{\matu}_k^\top=0,\]
where $\matv_k=(\matu_j)^{\otimes j\neq k}$. In view of Proposition~\ref{prop: 1st-order optimality}, $x$ is a first-order stationary point of~$g=f\circ\varphi$. For the second-order optimality, we observe that
\begin{equation*}
    \begin{aligned}
        &~~~\langle\nabla^2f(\tensX)[\dot{\tensX}],\dot{\tensX}\rangle+\sum_{k=1}^{d}\langle\nabla f(\tensX),\sum_{j\neq k}\tensG\times_k\dot{\matu}_k\times_j\dot{\matu}_j\times_{\ell\notin\{j,k\}}\matu_\ell\rangle\\
        &=\sum_{k=1}^{d}\sum_{j\neq k}\langle\vecv_1\circ\vecv_2\circ\cdots\circ\vecv_d,\tensG\times_k\dot{\matu}_k\times_j\dot{\matu}_j\times_{\ell\notin\{j,k\}}\matu_\ell\rangle\\
        &=\sum_{k=1}^{d}\sum_{j\neq k}\langle\vecv_1\circ\vecv_2\circ\cdots\circ\vecv_d,\underline{\tensG}\times_k(\dot{\matu}_k\underline{\matu}_k)\times_j(\dot{\matu}_j\underline{\matu}_j)\times_{\ell\notin\{j,k\}}(\matu_\ell\underline{\matu}_\ell)\rangle\\
        &=0
    \end{aligned}
\end{equation*}
holds for all $\dot{x}\in\tangent_x\!\tensM$ represented by $(\dot{\tensG},\dot{\matu}_1,\dots,\dot{\matu}_d)$, where $\tensX=\varphi(x)$ and $\dot{\tensX}=\varphi(\dot{x})$. Therefore, $x$ is a second-order stationary point of~\eqref{eq: problem (Q-desing)}. 

However, since $\ranktc(\tensX)<\vecr$ and $\nabla f(\tensX)\neq 0$, $\tensX$ is not a stationary point of~\eqref{eq: problem (P)} from~\cite[Proposition 3]{gao2023low}. In summary, we give a counterexample that a second-order stationary point of~\eqref{eq: problem (Q-desing)} does not necessarily imply a stationary point of~\eqref{eq: problem (P)}. Nevertheless, the desingularization preserves the intrinsic structure of the Tucker tensor varieties (see subsection~\ref{subsec: connection}) and provides a smooth parametrization. In addition, it provides a new perspective for low-rank tensor optimization.

\subsection{Parametrization via Tucker decomposition}\label{subsec: params via Tucker}
Alternatively, we consider the parametrization via Tucker decomposition with the search space $\tensM^\mathrm{Tucker}$ and mapping $\tilde{\varphi}$ in~\eqref{eq: Tucker parametrization}. Then, problem~\eqref{eq: problem (P)} can also be reformulated as
\begin{equation}
    \label{eq: problem (Q-Tucker)}
    \tag{Q-Tucker}
    \min_{\tilde{x}} \tilde{g}(\tilde{x})=f(\tilde{\varphi}(\tilde{x}))\quad\subjectto\ \tilde{x}\in\tensM^\mathrm{Tucker}.
\end{equation}
It is worth noting that the points in $\tensM$ can also be represented by $(\tensG,\matu_1,\dots,\matu_d)$. However, the manifold $\tensM$ is essentially different from $\tensM^\mathrm{Tucker}$ since they are embedded in different ambient spaces. Denote the set of first- and second-order stationary points of~\eqref{eq: problem (Q-Tucker)} (or~\eqref{eq: problem (Q-desing)}) by $\tilde{S}_1$ and $\tilde{S}_2$ (or $S_1$ and $S_2$). We illustrate the connection between $\tilde{S}_1$ ($\tilde{S}_2$) and $S_1$ ($S_2$). 

The next lemma validates that the group actions on $\tensM^\mathrm{Tucker}$ preserves stationarity.
\begin{lemma}\label{lem: stationary along group action}
    If $(\tilde{\tensG},\tilde{\matu}_1,\dots,\tilde{\matu}_d)\in\tensM^\mathrm{Tucker}$ is first-order (second-order) stationary of~$\tilde{g}$, then $(\tilde{\tensG}\times_{k=1}^d\matR_k^{-1},\tilde{\matu}_1\matR_1,\dots,\tilde{\matu}_d\matR_d)\in\tensM^\mathrm{Tucker}$ is also first-order (second-order) stationary of $\tilde{g}$ for all invertible matrices $\matR_k\in\GL(r_k)$, i.e., the group actions maintain stationarity.
\end{lemma}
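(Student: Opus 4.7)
The plan is to exploit invariance of $\tilde{g}$ under the group action, and then extract stationarity via the chain rule. Denote
\[
\sigma_\matR(\tilde{\tensG},\tilde{\matu}_1,\ldots,\tilde{\matu}_d) := (\tilde{\tensG}\times_{k=1}^d\matR_k^{-1},\tilde{\matu}_1\matR_1,\ldots,\tilde{\matu}_d\matR_d).
\]
First I would observe that $\sigma_\matR$ is a linear bijection of $\tensM^\mathrm{Tucker}$ onto itself with inverse $\sigma_{(\matR_1^{-1},\ldots,\matR_d^{-1})}$. Applying the mode-product identity $(\tensA\times_k\matB)\times_k\matC=\tensA\times_k(\matC\matB)$ at each mode then yields
\[
\tilde{\varphi}(\sigma_\matR(\tilde{x})) = \tilde{\tensG}\times_{k=1}^d(\tilde{\matu}_k\matR_k\matR_k^{-1}) = \tilde{\tensG}\times_{k=1}^d\tilde{\matu}_k = \tilde{\varphi}(\tilde{x}),
\]
so $\tilde{g}\circ\sigma_\matR = f\circ\tilde{\varphi}\circ\sigma_\matR = \tilde{g}$ pointwise on $\tensM^\mathrm{Tucker}$.

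Next I would transport stationarity through the chain rule. Since $\sigma_\matR$ is a linear operator on the flat Euclidean space $\tensM^\mathrm{Tucker}$, its differential $\mathrm{D}\sigma_\matR(\tilde{x})$ coincides with $\sigma_\matR$ itself at every point, and $\mathrm{D}^2\sigma_\matR \equiv 0$. Differentiating $\tilde{g}=\tilde{g}\circ\sigma_\matR$ once gives
\[
\mathrm{D}\tilde{g}(\tilde{x})[\dot{\tilde{x}}] = \mathrm{D}\tilde{g}(\sigma_\matR(\tilde{x}))[\sigma_\matR(\dot{\tilde{x}})],
\]
so if $\tilde{x}\in\tilde{S}_1$ the left-hand side vanishes for every $\dot{\tilde{x}}$, and invertibility of $\sigma_\matR$ forces $\mathrm{D}\tilde{g}(\sigma_\matR(\tilde{x}))\equiv 0$, i.e., $\sigma_\matR(\tilde{x})\in\tilde{S}_1$. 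Differentiating once more at such a first-order critical point, the contribution that would come from $\mathrm{D}^2\sigma_\matR$ is absent, leaving
\[
\Hess\tilde{g}(\tilde{x})[\dot{\tilde{x}},\dot{\tilde{x}}] = \Hess\tilde{g}(\sigma_\matR(\tilde{x}))[\sigma_\matR(\dot{\tilde{x}}),\sigma_\matR(\dot{\tilde{x}})],
\]
and bijectivity of $\sigma_\matR$ on tangent directions transports positive semi-definiteness from $\tilde{x}$ to $\sigma_\matR(\tilde{x})$, giving $\sigma_\matR(\tilde{x})\in\tilde{S}_2$ whenever $\tilde{x}\in\tilde{S}_2$.

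No substantive obstacle arises, since $\tensM^\mathrm{Tucker}$ is Euclidean and $\sigma_\matR$ is linear; the only bookkeeping is to apply the mode-$k$ cancellation at each $k$ simultaneously and to notice that the absence of a curvature correction in the second-order chain rule is precisely what makes the positive semi-definiteness transport automatic.
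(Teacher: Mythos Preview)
Your proof is correct and takes a genuinely different route from the paper's. The paper proceeds by brute-force computation: it writes out the partial gradients $\partial_{\tilde{\tensG}}\tilde{g}$, $\partial_{\tilde{\matu}_k}\tilde{g}$ and their directional derivatives explicitly, then checks term-by-term that each transforms by right-multiplication with the appropriate $\matR_k^{\pm 1}$ or $\matR_k^{\pm\top}$, and finally assembles the identity $\langle\dot{x},\nabla^2\tilde{g}(\tilde{x})[\dot{x}]\rangle=\langle\dot{x}^\matR,\nabla^2\tilde{g}(\tilde{x}^\matR)[\dot{x}^\matR]\rangle$. Your argument instead packages the whole group action as a single linear automorphism $\sigma_\matR$ of the flat space $\tensM^{\mathrm{Tucker}}$, observes $\tilde{g}\circ\sigma_\matR=\tilde{g}$, and lets the chain rule do the work; linearity of $\sigma_\matR$ kills the $\mathrm{D}^2\sigma_\matR$ term and bijectivity transports both first- and second-order stationarity. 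This is shorter and conceptually cleaner, and in fact applies verbatim to any smooth function invariant under a linear group action on a Euclidean space. The trade-off is that the paper's explicit formulas for $\partial_{\tilde{\tensG}}\tilde{g}(\tilde{x}^\matR)$ and $\partial_{\tilde{\matu}_k}\tilde{g}(\tilde{x}^\matR)$ are not mere bookkeeping: they are reused immediately afterward in the proofs of the two theorems comparing stationary sets under the Tucker and desingularization parametrizations, so the computational route amortizes its cost across the section.
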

\begin{proof}
    See Appendix~\ref{app: proof of lemma 4.2}.\hfill\squareforqed
\end{proof}

\begin{theorem}
    If $(\tilde{\tensG},\tilde{\matu}_1,\dots,\tilde{\matu}_d)\in\tensM^\mathrm{Tucker}$ is first-order stationary of $\tilde{g}$, then $x\in\tensM$ represented by $(\tilde{\tensG}\times_{k=1}^d\matR_k^{-1},\tilde{\matu}_1\matR_1,\dots,\tilde{\matu}_d\matR_d)$ is first-order stationary of $g$, where $\matR_k\in\GL(r_k)$ satisfies that $\tilde{\matu}_k\matR_k\in\St(r_k,n_k)$. Furthermore, it holds that $\varphi(S_1)=\tilde{\varphi}(\tilde{S}_1)$.
\end{theorem}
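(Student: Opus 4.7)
The plan is to combine Lemma~\ref{lem: stationary along group action} with the first-order characterization of Proposition~\ref{prop: 1st-order optimality}. First I would compute the Euclidean gradient of $\tilde{g}$ on the product space $\tensM^\mathrm{Tucker}$ by the chain rule, obtaining
\[\nabla_{\tilde{\tensG}}\tilde{g}(\tilde{x})=\nabla f(\tilde{\tensX})\times_{k=1}^d\tilde{\matu}_k^\top,\qquad\nabla_{\tilde{\matu}_k}\tilde{g}(\tilde{x})=(\nabla f(\tilde{\tensX}))_{(k)}\tilde{\matv}_k\tilde{\matG}_{(k)}^\top,\]
with $\tilde{\tensX}=\tilde{\varphi}(\tilde{x})$ and $\tilde{\matv}_k=(\tilde{\matu}_j)^{\otimes j\neq k}$, so $\tilde{x}\in\tilde{S}_1$ if and only if both quantities vanish.

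Next, given $\matR_k\in\GL(r_k)$ with $\matu_k:=\tilde{\matu}_k\matR_k\in\St(r_k,n_k)$, I would set $\tensG:=\tilde{\tensG}\times_{k=1}^d\matR_k^{-1}$. Using $(\tensA\times_k\matM)\times_k\matN=\tensA\times_k(\matN\matM)$ mode by mode, one checks $\tensG\times_{k=1}^d\matu_k=\tilde{\tensG}\times_{k=1}^d\tilde{\matu}_k=\tilde{\tensX}$, so $(\tensG,\matu_1,\dots,\matu_d)$ represents a genuine point $x\in\tensM$ with $\varphi(x)=\tilde{\varphi}(\tilde{x})$. By Lemma~\ref{lem: stationary along group action} the transformed tuple is still stationary of $\tilde{g}$, hence $\nabla f(\tensX)\times_{k=1}^d\matu_k^\top=0$ and $(\nabla f(\tensX))_{(k)}\matv_k\matG_{(k)}^\top=0$ with $\matv_k=(\matu_j)^{\otimes j\neq k}$. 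Multiplying the first identity by $\matu_k$ along mode~$k$ produces $\nabla f(\tensX)\times_{k=1}^d\proj_{\matu_k}=0$; combined with the second identity, Proposition~\ref{prop: 1st-order optimality} yields $x\in S_1$, which proves the first claim and gives the inclusion $\tilde{\varphi}(\tilde{S}_1)\subseteq\varphi(S_1)$.

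For the reverse inclusion $\varphi(S_1)\subseteq\tilde{\varphi}(\tilde{S}_1)$, I would take $x\in S_1$ with representation $(\tensG,\matu_1,\dots,\matu_d)$, $\matu_k\in\St(r_k,n_k)$, and view the same tuple as an element $\tilde{x}\in\tensM^\mathrm{Tucker}$, so that $\tilde{\varphi}(\tilde{x})=\varphi(x)$ trivially. Since $\matu_k^\top\matu_k=\matI_{r_k}$, the identity $\nabla f(\tensX)\times_{k=1}^d\proj_{\matu_k}=0$ from Proposition~\ref{prop: 1st-order optimality} is equivalent, after applying $\matu_k^\top$ in mode~$k$, to $\nabla f(\tensX)\times_{k=1}^d\matu_k^\top=0$, while $(\nabla f(\tensX))_{(k)}\matv_k\matG_{(k)}^\top=0$ transfers verbatim, giving $\tilde{x}\in\tilde{S}_1$.

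The main obstacle is the bookkeeping for the $\matR_k$-transformation: one must verify that $\tensX$ is preserved and that $\matG_{(k)}$ and $\matv_k$ transform compatibly so the Tucker-side stationarity identities survive the change of representation, which is precisely the content of Lemma~\ref{lem: stationary along group action} and therefore requires no separate computation. Once this is in hand, the orthonormality $\matu_k\in\St(r_k,n_k)$ is the single lever that bridges the Tucker gradient condition $\nabla f(\tensX)\times_k\matu_k^\top=0$ and the desingularization condition $\nabla f(\tensX)\times_k\proj_{\matu_k}=0$, delivering both the stationarity statement and the equality $\varphi(S_1)=\tilde{\varphi}(\tilde{S}_1)$.
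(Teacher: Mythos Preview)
Your proposal is correct and follows essentially the same route as the paper: compute the Euclidean partial derivatives of $\tilde{g}$, invoke the group-action invariance (Lemma~\ref{lem: stationary along group action}) to pass to an orthonormal representation, and then match the resulting conditions with Proposition~\ref{prop: 1st-order optimality} via the equivalence $\nabla f(\tensX)\times_{k=1}^d\matu_k^\top=0\Leftrightarrow\nabla f(\tensX)\times_{k=1}^d\proj_{\matu_k}=0$ for orthonormal $\matu_k$. The paper's proof is terser but relies on exactly the same ingredients and the same ``if and only if'' observation to obtain both inclusions simultaneously.
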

\begin{proof}
    Denote $\tilde{x}=(\tilde{\tensG},\tilde{\matu}_1,\dots,\tilde{\matu}_d)$ and $\tilde{x}^\matR=(\tilde{\tensG}\times_{k=1}^d\matR_k^{-1},\tilde{\matu}_1\matR_1,\dots,\tilde{\matu}_d\matR_d)$. Recall the partial derivatives~\eqref{eq: partial G} and~\eqref{eq: partial uk}, and equalities~\eqref{eq: partial G under group} and~\eqref{eq: partial uk under group}. It is straightforward to verify that Proposition~\ref{prop: 1st-order optimality} holds at $x\in\tensM$ represented by $(\tilde{\tensG}\times_{k=1}^d\matR_k^{-1},\tilde{\matu}_1\matR_1,\dots,\tilde{\matu}_d\matR_d)$ if and only if $\nabla \tilde{g}(\tilde{x}^\matR)=0$. Since $\varphi(x)=\tilde{\varphi}(\tilde{x}^\matR)=\tilde{\varphi}(\tilde{x})$, it holds that $\varphi(S_1)=\tilde{\varphi}(\tilde{S}_1)$.
\end{proof}

\begin{theorem}
    If $(\tilde{\tensG},\tilde{\matu}_1,\dots,\tilde{\matu}_d)\in\tensM^\mathrm{Tucker}$ is second-order stationary of $\tilde{g}$, then $x\in\tensM$ represented by $(\tilde{\tensG}\times_{k=1}^d\matR_k^{-1},\tilde{\matu}_1\matR_1,\dots,\tilde{\matu}_d\matR_d)$ is second-order stationary of $g$, where $\matR_k\in\GL(r_k)$ satisfies that $\tilde{\matu}_k\matR_k\in\St(r_k,n_k)$. Furthermore, it holds that $\varphi(S_2)=\tilde{\varphi}(\tilde{S}_2)$.
\end{theorem}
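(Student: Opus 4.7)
\textit{Proof proposal.} The strategy is the second-order analogue of the first-order theorem: by Lemma~\ref{lem: stationary along group action}, which applies equally to second-order stationarity, I reduce to $\matR_k = \matI_{r_k}$ so that $\tilde{\matu}_k \in \St(r_k, n_k)$; the preceding (first-order) theorem then supplies $\grad g(x) = 0$. What remains is to compare the Euclidean Hessian of $\tilde g$ at $\tilde x$ with the Riemannian Hessian of $g$ at $x$ from Proposition~\ref{prop: 2nd-order optimality}, and to deduce PSD-ness of the latter on $\tangent_x\tensM$ from PSD-ness of the former on $T_{\tilde x}\tensM^\mathrm{Tucker}$.

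Concretely, any $\dot x \in \tangent_x\tensM$ with parameters $(\dot\tensG, \dot\matu_1, \dots, \dot\matu_d)$ (satisfying $\dot\matu_k^\top\tilde\matu_k = 0$) lifts to $\dot{\tilde x} := (\dot\tensG, \dot\matu_1, \dots, \dot\matu_d) \in T_{\tilde x}\tensM^\mathrm{Tucker}$ with $\mathrm D\tilde\varphi(\tilde x)[\dot{\tilde x}] = \dot\tensX$. I would expand the chain-rule formula $\nabla^2\tilde g = \nabla^2 f\circ (\mathrm D\tilde\varphi)^{\otimes 2} + \langle\nabla f, \mathrm D^2\tilde\varphi\rangle$ at such lifts and, for a general $\dot{\tilde x}$, decompose $\dot{\tilde\matu}_k = \tilde\matu_k\matS_k + \dot\matu_k$ with $\dot\matu_k^\top\tilde\matu_k = 0$ and $\dot\tensG := \dot{\tilde\tensG} + \sum_k\tilde\tensG\times_k\matS_k$. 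Repeatedly invoking the first-order conditions $\nabla f(\tensX)\times_{j=1}^d\tilde\matu_j^\top = 0$ and $(\nabla f(\tensX))_{(k)}V_k\tilde\tensG_{(k)}^\top = 0$ (Proposition~\ref{prop: 1st-order optimality}) to cancel all $\matS_k$-contributions, one arrives at
\begin{equation*}
    \langle\nabla^2\tilde g(\tilde x)[\dot{\tilde x}], \dot{\tilde x}\rangle \;=\; \langle\Hess g(x)[\dot x], \dot x\rangle_x \;+\; 2\langle\tensY, \dot\tensG\rangle,
\end{equation*}
where $\tensY := \sum_k \nabla f(\tensX)\times_k\dot\matu_k^\top\times_{j\neq k}\tilde\matu_j^\top \in \mathbb R^{r_1\times\cdots\times r_d}$ is a residual term. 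In the full-rank stratum $\ranktc(\tensG) = \vecr$ it is immediate that $\tensY = 0$, since the first-order condition $(\nabla f)_{(k)}V_k\tilde\tensG_{(k)}^\top = 0$ combined with full row rank of $\tilde\tensG_{(k)}$ forces $(\nabla f)_{(k)}V_k = 0$. In the rank-deficient stratum, one exploits the PSD property of $\nabla^2\tilde g(\tilde x)$ by varying $\dot\tensG$ in the direction of $\tensY$ and analyzing the resulting one-parameter quadratic in that scalar, whose discriminant constraint together with $\ker\tilde\tensG_{(k)}$-considerations again forces $\tensY = 0$. This yields $\langle\Hess g(x)[\dot x], \dot x\rangle_x \geq 0$ for every $\dot x \in \tangent_x\tensM$, i.e., $x \in S_2$.

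The equality $\varphi(S_2) = \tilde\varphi(\tilde S_2)$ then decomposes as follows. The inclusion $\supseteq$ is precisely the forward direction above, extended to degenerate $\tilde\matu_k$ by factoring $\tilde\varphi$ through a thin Tucker decomposition (the effective parameters are then full rank). For $\subseteq$, given $x \in S_2$ with thin Tucker $\tensX = \underline\tensG\times_k\underline\matu_k$ of multilinear rank $\underline{\vecr} \leq \vecr$, I construct a zero-padded lift $\tilde x = (\tilde\tensG, \tilde\matu_1,\dots,\tilde\matu_d) \in \tensM^\mathrm{Tucker}$ with $\tilde\matu_k = [\matu_k\underline\matu_k\ 0] \in \mathbb R^{n_k\times r_k}$ and $\tilde\tensG$ a corresponding zero-extension of $\underline\tensG$ into $\mathbb R^{r_1\times\cdots\times r_d}$; the sparsity of $\tilde\matu_k$ forces $\tensY = 0$ identically and matches the two Hessian expressions term by term, delivering $\tilde x \in \tilde S_2$ with $\tilde\varphi(\tilde x) = \tensX$. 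The main obstacle is the rank-deficient case of the forward step, namely showing $\tensY = 0$ at a second-order stationary $\tilde x$ without full multilinear rank of $\tilde\tensG$; this requires carefully extracting the quadratic structure of $\langle\nabla^2\tilde g[\dot{\tilde x}], \dot{\tilde x}\rangle$ in $\dot\tensG$ and bootstrapping the rank-deficient first-order condition into an orthogonality constraint killing the excess term.
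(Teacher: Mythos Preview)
Your approach matches the paper in first reducing to orthonormal factors via Lemma~\ref{lem: stationary along group action} and then comparing the two Hessian quadratic forms. You actually go further than the paper on one point: you explicitly track the residual $2\langle\tensY,\dot\tensG\rangle$ with $\tensY=\sum_k\nabla f(\tensX)\times_k\dot\matu_k^\top\times_{j\neq k}\tilde\matu_j^\top$, which arises from the cross derivatives $\partial_{\tilde\tensG}\partial_{\tilde\matu_k}\tilde g$ and genuinely separates $\langle\nabla^2\tilde g(\tilde x)[\dot{\tilde x}],\dot{\tilde x}\rangle$ from the desingularization expression of Proposition~\ref{prop: 2nd-order optimality}. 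The paper's displayed identity~\eqref{eq: 2nd order stationary of Tucker} simply omits this term and then argues by inclusion of parameter sets (desing tangent vectors are those Tucker tangent vectors satisfying $\dot\matu_k^\top\tilde\matu_k=0$), which is immediate \emph{if} the term is absent; you instead try to prove it vanishes. So on the forward direction your computation is more careful than the paper's.

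The gap is in your resolution of $\tensY$. Your full-rank argument asserts that $(\nabla f)_{(k)}\matv_k\tilde\matG_{(k)}^\top=0$ with $\tilde\matG_{(k)}$ of full row rank forces $(\nabla f)_{(k)}\matv_k=0$. But $\tilde\matG_{(k)}^\top\in\mathbb R^{r_{-k}\times r_k}$, and cancelling it on the right needs $\tilde\matG_{(k)}^\top$ to have full \emph{row} rank $r_{-k}$, i.e.\ $r_k\ge r_{-k}=\prod_{j\neq k}r_j$; this already fails for $d=3$, $r_1=r_2=r_3=2$. So even on the full-rank stratum $\tensY$ need not vanish, and your discriminant sketch for the rank-deficient case does not obviously repair this: the quadratic in $\dot\tensG$ has leading coefficient $\langle\nabla^2 f[\tensY\times_k\matu_k],\tensY\times_k\matu_k\rangle$, which carries no a priori sign or size relative to the linear term $2\|\tensY\|^2$. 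For the reverse inclusion, your zero-padded lift is a reasonable alternative to the paper's decomposition $\dot\matu_k=\matu_k\matR_{k,1}+\matu_k^\perp\matR_{k,2}$, but it faces the same $\tensY$ obstruction. In short, you have correctly isolated the crux that the paper glosses over, but the mechanism you propose for killing $\tensY$ does not work as stated.
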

\begin{proof}
    On the one hand, for a second-order stationary point $\tilde{x}$ of $\tilde{g}$, there exists $\matR_k\in\GL(r_k)$, such that $\tilde{\matu}_k\matR_k\in\St(r_k,n_k)$ and $\tilde{x}^\matR=(\tilde{\tensG}\times_{k=1}^d\matR_k^{-1},\tilde{\matu}_1\matR_1,\dots,\tilde{\matu}_d\matR_d)$ is also a second-order stationary point of $\tilde{g}$ from Lemma~\ref{lem: stationary along group action}. We observe from~\eqref{eq: equal inner product} that 
    \begin{align}
        &~~~~\langle\dot{x}^\matR,\nabla^2 \tilde{g}(\tilde{x}^\matR)[\dot{x}^\matR]\rangle=\langle\dot{x},\nabla^2 \tilde{g}(\tilde{x})[\dot{x}]\rangle\nonumber\\
        &=\langle\nabla^2f(\tensX)[\dot{\tensX}],\dot{\tensX}\rangle+\sum_{k=1}^{d}\langle\nabla f(\tensX),\sum_{j\neq k}\tilde{\tensG}\times_k\dot{\matu}_k\times_j\dot{\matu}_j\times_{\ell\notin\{j,k\}}\tilde{\matu}_\ell\rangle\geq 0\label{eq: 2nd order stationary of Tucker}
    \end{align}
    holds for all $\dot{x}\in\tangent_{\tilde{x}}\!\tensM^\mathrm{Tucker}$, which is in the same form as Proposition~\ref{prop: 2nd-order optimality} without the orthogonality conditions $\dot{\matu}_k^\top\tilde{\matu}_k^{}=0$ for $k\in[d]$. Therefore, the point $x\in\tensM$ represented by $(\tilde{\tensG}\times_{k=1}^d\matR_k^{-1},\tilde{\matu}_1\matR_1,\dots,\tilde{\matu}_d\matR_d)$ is a second-order stationary point of~\eqref{eq: problem (Q-desing)}. Since $\tilde{\varphi}(\tilde{x})=\tilde{\varphi}(\tilde{x}^\matR)=\varphi(x)$, it holds that  $\tilde{\varphi}(\tilde{S}_2)\subseteq\varphi(S_2)$.

    On the other hand, let $x\in\tensM$ represented by $(\tensG,\matu_1,\dots,\matu_d)$ be a second-order stationary point of~\eqref{eq: problem (Q-desing)}. We aim to show that $\tilde{x}=(\tensG,\matu_1,\dots,\matu_d)$ is a second-order stationary point of~\eqref{eq: problem (Q-Tucker)}, i.e., \eqref{eq: 2nd order stationary of Tucker} holds for all $\dot{x}\in\tangent_{\tilde{x}}\!\tensM^\mathrm{Tucker}$. Since $\dot{x}\in\tensM^\mathrm{Tucker}$, there exists $\matR_{k,1}\in\mathbb{R}^{r_k\times r_k}$ and $\matR_{k,2}\in\mathbb{R}^{(n_k-r_k)\times r_k}$ such that $\dot{\matu}_k=\matu_k\matR_{k,1}+\matu_k^\perp\matR_{k,2}$. Taking the decomposition into~\eqref{eq: 2nd order stationary of Tucker}, we obtain that
    \begin{equation*}
        \begin{aligned}
            &~~~~\langle\nabla^2\tilde{g}(\tilde{x})[\dot{x}],\dot{x}\rangle\\
            &=\langle\nabla^2f(\tensX)[\dot{\tensX}],\dot{\tensX}\rangle+\sum_{k=1}^{d}\langle\nabla f(\tensX),\sum_{j\neq k}\tensG\times_k\dot{\matu}_k\times_j\dot{\matu}_j\times_{\ell\notin\{j,k\}}\matu_\ell\rangle\\
            &=\langle\nabla^2f(\tensX)[\dot{\tensX}],\dot{\tensX}\rangle+\sum_{k=1}^{d}\langle\nabla f(\tensX),\sum_{j\neq k}\tensG\times_k(\matu_k^\perp\matR_{k,2})\times_j(\matu_j^\perp\matR_{j,2})\times_{\ell\notin\{j,k\}}\matu_\ell\rangle,
        \end{aligned}
    \end{equation*}
    where $\dot{\tensX}=\mathrm{D}\tilde{\varphi}(\tilde{x})[\dot{x}]$ and the last equality follows from the first-order optimality conditions in Proposition~\ref{prop: 1st-order optimality}. Since $\dot{\tensX}$ can also be expressed by $\dot{\tensX}=(\dot{\tensG}+\sum_{k=1}^d\tensG\times_k\matR_{k,1})\times_{k=1}^d\matu_k+\sum_{k=1}^d\tensG\times_k(\matu_k^\perp\matR_{k,2})\times_{j\neq k}\matu_j$, we obtain that 
    \[\langle\nabla^2\tilde{g}(\tilde{x})[\dot{x}],\dot{x}\rangle=\langle\Hess\!g(x)[\dot{y}],\dot{y}\rangle_x\geq 0,\]
    where $\dot{y}\in\tangent_x\!\tensM$ is represented by $((\dot{\tensG}+\sum_{k=1}^d\tensG\times_k\matR_{k,1}),\matu_1^\perp\matR_{1,2},\dots,\matu_d^\perp\matR_{d,2})$. Therefore, $\tilde{x}=(\tensG,\matu_1,\dots,\matu_d)$ is a second-order stationary point of~\eqref{eq: problem (Q-Tucker)}. Since $\varphi(x)=\tilde{\varphi}(\tilde{x})$, it holds that $\varphi(S_2)=\tilde{\varphi}(\tilde{S}_2)$. \hfill\squareforqed
\end{proof}

While different parametrizations are able to share the same stationary points, the numerical performance can be disparate in certain scenarios; see Fig~\ref{fig: 0}. Moreover, since $\tilde{\varphi}(\tilde{S}_2)=\varphi(S_2)$, the counterexample in subsection~\ref{subsec: counterexample} demonstrates that $\tilde{\varphi}(\tilde{x})$ may not be stationary for~\eqref{eq: problem (P)} even if $\tilde{x} \in \tensM^\mathrm{Tucker}$ is a second-order stationary point of~\eqref{eq: problem (Q-Tucker)}. In other words, optimization via these parametrizations does not necessarily lead to a stationary point of~\eqref{eq: problem (P)}, which is essentially different from the matrix case; see Table~\ref{tab: existing works} for an overview. We leave the effort of finding a stationary point of~\eqref{eq: problem (P)} for future research.

\section{Desingularization of tensor train varieties}\label{sec: TT desing}
Recently, the tensor train (TT) decomposition~\cite{oseledets2011tensor}---also known as the matrix product states with open boundary condition~\cite{haegeman2014geometry}---appears to be prosperous~\cite{holtz2012manifolds,steinlechner2016riemannian,kutschan2018tangent,vermeylen2023rank}. However, TT varieties still suffer from the singularity. It is appealing to know how to desingularize the TT varieties. In this section, we introduce a desingularization of tensor train varieties.

\subsection{Tensor train decomposition}
Given $\tensX\in\mathbb{R}^{n_1\times n_2\times\cdots\times n_d}$, the TT decomposition of $\tensX$ is defined by 
\[\tensX=\llbracket\tensU_1,\tensU_2,\dots,\tensU_d\rrbracket\quad\text{or}\quad\tensX(i_1,i_2,\dots,i_d)=\matu_1(i_1)\matu_2(i_2)\cdots\matu_d(i_d)\]
for $i_k\in[n_k]$ and $k\in[d]$, where $\tensU_k\in\mathbb{R}^{r_{k-1}\times n_k\times r_k}$ is a core tensor, $\matu_k(i_k)={\tensU_k(:,i_k,:)}$ and $r_0=r_d=1$. Figure~\ref{fig: TT} depicts the TT decomposition of a third-order tensor.
\begin{figure}[htbp]
	\centering\scriptsize
	\begin{tikzpicture}[scale = 0.9]
		\draw[-] (-1.4,1.6) -- (-0.6,2.2);
		\draw[-] (-0.6,2.2) -- (-2.1,2.2);
		\draw[-] (-2.1,2.2) -- (-2.9,1.6);
		\draw[-] (-2.9,1.6) rectangle (-1.4,0.1);
		\draw[-] (-1.4,0.1) -- (-0.6,0.7);
		\draw[-] (-0.6,0.7) -- (-0.6,2.2);
		\node (v) at (-2.15,0.85) {$\tensX$};

            \fill (-2.52,0.36+1) circle (1pt);
		\node (x123) at (-2.52,-0.32) {$\tensX(i_1,\cdots,i_d)$};
		\draw[->] (-2.52,0.36+1) -- (x123);
	
		\node (u) at (-0.3,1.3) {$=$};
		\node (u) at (-0.3,-0.3) {$=$};
	
		\draw[-] (1.5-0.9,1.5-0.3) -- (1.98-0.9,1.86-0.3);
		\draw[-] (1.98-0.9,1.86-0.3) -- (1.38-0.9,1.86-0.3);
		\draw[-] (0.9-0.9,1.5-0.3) -- (1.38-0.9,1.86-0.3);
		\draw[-] (0.9-0.9,1.5-0.3) -- (1.5-0.9,1.5-0.3);
		\node (G) at (1.2-0.9,2) {$\tensU_1$};
		\path[draw, pattern=dots] (0.2,1.2) -- (0.25,1.2) -- (0.73,1.56) -- (0.68,1.56) -- cycle;
		\node (G1) at (0.465,-0.3) {$\matU_1(i_1)$};
		\draw[->] (0.465,1.38) -- (G1);
		
		\draw[-] (1.5+0.6,1.5) -- (1.98+0.6,1.86);
		\draw[-] (1.98+0.6,1.86) -- (1.38+0.6,1.86);
		\draw[-] (0.9+0.6,1.5) -- (1.38+0.6,1.86);
		\draw[-] (1.98+0.6,1.86) -- (1.98+0.6,1.26);
		\draw[-] (1.5+0.6,0.9) -- (1.98+0.6,1.26);
		\draw[-] (0.9+0.6,1.5) rectangle (1.5+0.6,0.9);
		\node (G) at (1.2+0.6,2) {$\tensU_{2}$};
		\path[draw, pattern=dots] (0.2+1.5,0.9) -- (0.68+1.5,1.26) -- (0.68+1.5,1.86) -- (0.2+1.5,1.5) -- cycle;
		\node (G2) at (0.44+1.5,-0.3) {$\matU_{2}(i_2)$};
		\draw[->] (0.44+1.5,1.38) -- (G2);
		
		\node(cdots) at (3,1.38) {$\cdots$};

		\draw[-] (1.5+0.6+1.92,1.5) -- (1.98+0.6+1.92,1.86);
		\draw[-] (1.98+0.6+1.92,1.86) -- (1.38+0.6+1.92,1.86);
		\draw[-] (0.9+0.6+1.92,1.5) -- (1.38+0.6+1.92,1.86);
		\draw[-] (1.98+0.6+1.92,1.86) -- (1.98+0.6+1.92,1.26);
		\draw[-] (1.5+0.6+1.92,0.9) -- (1.98+0.6+1.92,1.26);
		\draw[-] (0.9+0.6+1.92,1.5) rectangle (1.5+0.6+1.92,0.9);
		\node (G) at (1.2+0.6+1.92,2) {$\tensU_{d-1}$};
		\path[draw, pattern=dots] (0.2+1.5+1.92,0.9) -- (0.68+1.5+1.92,1.26) -- (0.68+1.5+1.92,1.86) -- (0.2+1.5+1.92,1.5) -- cycle;
		\node (Gd1) at (0.44+1.5+1.92,-0.3) {$\matU_{d-1}(i_{d-1})$};
		\draw[->] (0.44+1.5+1.92,1.38) -- (Gd1);

		\draw[-] (0.9+0.6+1.92+1.5,1.5+0.18) rectangle (1.5+0.6+1.92+1.5,0.9+0.18);
		\node (G) at (1.2+0.6+1.92+1.5,2) {$\tensU_{d}$};
		\path[draw, pattern=dots] (0.2+1.5+1.92+1.5,0.9+0.18) -- (0.25+1.5+1.92+1.5,0.9+0.18) -- (0.25+1.5+1.92+1.5,1.5+0.18) -- (0.2+1.5+1.92+1.5,1.5+0.18) -- cycle;
		\node (Gd) at (0.225+1.5+1.92+1.5,-0.3) {$\matU_{d}(i_{d})$};
		\draw[->] (0.225+1.5+1.92+1.5,1.38) -- (Gd);
	\end{tikzpicture}
        \caption{Tensor train decomposition of a third-order tensor}
        \label{fig: TT}
    \end{figure}
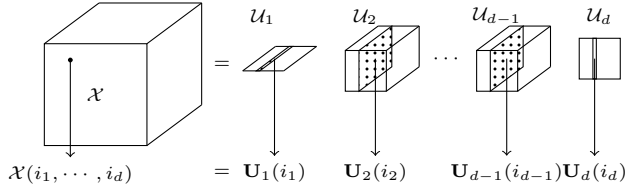

We introduce the notation for tensor train tensors as follows (see, e.g.,~\cite{oseledets2011tensor}). The \emph{$k$-th unfolding matrix} of $\tensX$ is defined by $\matx_{\langle k\rangle}\in\mathbb{R}^{(n_1n_2\cdots n_k)\times(n_{k+1}n_{k+2}\cdots n_d)}$ for $k\in[d-1]$ with 
\[\matx_{\langle k\rangle}(i_1+\sum_{j=2}^{k}(i_j-1)\prod_{\ell=1}^{j-1}n_\ell,\ i_{k+1}+\sum_{j=k+2}^{d}(i_j-1)\prod_{\ell=k+1}^{j-1}n_\ell)=\tensX(i_1,i_2,\dots,i_d).\]
The TT rank of $\tensX$ is $\ranktt(\tensX)=(\rank(\matx_{\langle 1\rangle}),\rank(\matx_{\langle 2\rangle}),\dots,\rank(\matx_{\langle d-1\rangle}))$. A core tensor $\tensU_k$ can be reshaped to the \emph{left} and \emph{right unfoldings} defined by $\leftunfolding(\tensU_k)\in\mathbb{R}^{(r_{k-1}n_k)\times r_k}$ and $\rightunfolding(\tensU_k)\in\mathbb{R}^{r_{k-1}\times (n_kr_k)}$. The \emph{interface} matrices $\matx_{\leq k}$ and $\matx_{\geq k+1}$ of $\tensX$ are defined by $\matx_{\leq k}(i_1+\sum_{j=2}^{k}(i_j-1)\prod_{\ell=1}^{j-1}n_\ell, :)=\matu_1(i_1)\cdots\matu_k(i_k)$ and $\matx_{\geq k+1}(i_{k+1}+\sum_{j=k+2}^{d}(i_j-1)\prod_{\ell=k+1}^{j-1}n_\ell,:)=(\matu_{k+1}(i_{k+1})\cdots\matu_d(i_d))^\top$ respectively. It holds that $\matx_{\langle k\rangle}=\matx_{\leq k}^{}\matx_{\geq k+1}^\top$ and the interface matrices can be constructed recursively by
\begin{equation}\label{eq: recursive interface}
    \matx_{\leq k}=(\matI_{n_k}\otimes\matx_{\leq k-1})\leftunfolding(\tensU_k)\quad\text{and}\quad\matx_{\geq k+1}=(\matx_{\geq k+2}\otimes\matI_{n_{k+1}})\rightunfolding(\tensU_{k+1})^\top.
\end{equation}
A tensor $\tensX=\llbracket\tensU_1,\tensU_2,\dots,\tensU_d\rrbracket$ is called $k$-orthogonal if $\leftunfolding(\tensU_j)\in\St(r_j,r_{j-1}n_j)$ for $j\in[k-1]$ and $\rightunfolding(\tensU_j)^\top\in\St(r_{j-1},n_jr_j)$ for $j=k+1,k+2,\dots,d$. The tensor is called left- or right-orthogonal if $k=d$ or $k=1$, respectively. It follows from~\cite[\S 3.1]{steinlechner2016riemannian} that any tensor $\tensX$ can be left- or right-orthogonalized via QR decomposition.

\subsection{Desingularization of tensor train varieties}
Given $\vecr^\mathrm{TT}=(r_1,r_2,\dots,r_{d-1})$, the set of bounded TT-rank tensors $\mathbb{R}^{n_1\times n_2\times\cdots\times n_d}_{\leq\vecr^\mathrm{TT}}$ is an algebraic variety~\cite{kutschan2018tangent}. We aim to desingularize $\mathbb{R}^{n_1\times n_2\times\cdots\times n_d}_{\leq\vecr^\mathrm{TT}}$ in a similar fashion as Tucker tensor varieties. However, the generalization is not straightforward due to the different formulations of varieties.

Since core tensors in the TT decomposition are interconnected through interface matrices, we introduce a modified set of slack variables that maintain the right-orthogonal structure while providing a smooth manifold representation:
\begin{equation}\label{eq: desingularization of TT}
    \tensM^\mathrm{TT}=\left\{\begin{array}{rl}
        (\tensX,\matP_1,\matP_2,\dots,\matP_{d-1}):&\tensX=\llbracket\tensU_1,\tensU_2,\dots,\tensU_d\rrbracket,\\
        &\matP_k=\matI_{n_{k+1}\dots n_d}-\matP_{\geq k+1},\ k\in[d-1],\vspace{1mm}\\
        &(\tensU_1,\tensU_2,\dots,\tensU_d)\in \tensS^\mathrm{TT}
    \end{array}\right\},
\end{equation}
where $\matP_{\geq k+1}=\matx_{\geq k+1}^{}(\matx_{\geq k+1}^\top\matx_{\geq k+1}^{})^{-1}\matx_{\geq k+1}^\top$ and $\tensS^\mathrm{TT}=\{(\tensU_1,\tensU_2,\dots,\tensU_d):\tensU_k\in\mathbb{R}^{r_{k-1}\times n_k\times r_{k}},\ k\in[d],\ \rank(\rightunfolding(\tensU_k))=r_{k-1},\ k=2,3,\dots,d\}$ is the parameter space of TT tensors with rank constraints. The TT variety $\mathbb{R}^{n_1\times n_2\times\cdots\times n_d}_{\leq\vecr^\mathrm{TT}}$ can be parametrized by the smooth mapping 
\[\varphi^\mathrm{TT}:\tensM^\mathrm{TT}\to\mathbb{R}^{n_1\times n_2\times\cdots\times n_d}_{\leq\vecr^\mathrm{TT}},\quad \varphi^\mathrm{TT}(\tensX,\matP_1,\matP_2,\dots,\matP_{d-1})=\tensX.\] 
Note that there is no rank constraint on $\tensU_1$ and $\varphi^\mathrm{TT}(\tensM^\mathrm{TT})=\mathbb{R}^{n_1\times n_2\times\cdots\times n_d}_{\leq\vecr^\mathrm{TT}}$. By using~\eqref{eq: recursive interface} and the rank constraints in $\tensS^\mathrm{TT}$, we observe that $\rank(\matx_{\geq k+1})=r_k$ for $k\in[d-1]$. Consequently, the parametrization~\eqref{eq: desingularization of TT} is well-defined.

\subsection{Manifold structure}
We consider the group action
\[\theta^\mathrm{TT}((\tensU_1,\dots,\tensU_d),\mata_1,\dots,\mata_{d-1})=(\tensU_1\times_3\mata_1^\top,\tensU_2\times_1\mata_1^{-1}\times_3\mata_2^\top,\dots,\tensU_d\times_1\mata_{d-1}^{-1})\]
for matrices $\mata_k\in\GL(r_k)$, which introduces an equivalent class $\sim$. We show that $\theta^\mathrm{TT}$ is free and proper. Given fixed $(\tensU_1,\dots,\tensU_d)\in\tensS^\mathrm{TT}$ and $(\tensV_1,\dots,\tensV_d)\in\tensS^\mathrm{TT}$ satisfying $(\tensU_1,\dots,\tensU_d)=\theta^\mathrm{TT}((\tensV_1,\dots,\tensV_d),\mata_1,\dots,\mata_{d-1})$ for some $\mata_k\in\GL(r_k)$, the matrices $\mata_k$ can be constructed recursively by 
\[\mata_{d-1}=\rightunfolding(\matv_d)\rightunfolding(\matu_d)^\dagger \quad\text{and}\quad \mata_{k-1}=\rightunfolding(\matv_k)(\mata_k\otimes\matI_{n_k})\rightunfolding(\matu_k)^\dagger\] 
for $k=2,3,\dots,d-1$, where $\rightunfolding(\matu_k)^\dagger=\rightunfolding(\matu_k)^\top(\rightunfolding(\matu_k)\rightunfolding(\matu_k)^\top)^{-1}$ is the Moore--Penrose pseudoinverse of $\rightunfolding(\matu_k)$. Therefore, the mapping $\theta^\mathrm{TT}$ is invertible and $(\tensU_1,\dots,\tensU_d)=(\tensV_1,\dots,\tensV_d)$ indicates $\mata_k=\matI_{r_k}$ for $k\in[d-1]$. Consequently, $\theta^\mathrm{TT}$ is free and proper, and the set $\tensS^\mathrm{TT}/\!\sim$ is a quotient manifold of~$\tensS^\mathrm{TT}$. 

In order to show that the desingularization $\tensM^\mathrm{TT}$ is an embedded submanifold of $\tensE^\mathrm{TT}=\mathbb{R}^{n_1\times n_2\times\cdots\times n_d}\times\Sym(n_2n_3\cdots n_d)\times\cdots\times\Sym(n_d)$, we first figure out the \emph{vertical} and \emph{horizontal spaces} of $\tensS^\mathrm{TT}$. Subsequently, we construct the mapping 
\begin{equation}
    \label{eq: mapping psi}
    \psi:\tensS^\mathrm{TT}/\!\sim\to\tensM^\mathrm{TT},\quad
    \psi([\tensU_1,\tensU_2,\dots,\tensU_d])=(\llbracket\tensU_1,\tensU_2,\dots,\tensU_d\rrbracket,\matP_1,\matP_2,\dots,\matP_{d-1})
\end{equation}
with $\matP_k=\matI_{n_{k+1}\dots n_d}-\matx_{\geq k+1}^{}(\matx_{\geq k+1}^\top\matx_{\geq k+1}^{})^{-1}\matx_{\geq k+1}^\top$ for $k\in[d-1]$. Note that $\psi$ is well-defined on the quotient manifold $\tensS^\mathrm{TT}/\!\sim$; see Proposition~\ref{prop: well defined psi}. Furthermore, we prove that $\psi$ is a homeomorphism and a smooth immersion. Finally, we obtain that $\tensM^\mathrm{TT}$ is a submanifold of $\tensE^\mathrm{TT}$; see Fig.~\ref{fig: communicative TT} for the diagram of desingularization. 

\begin{figure}[htbp]
    \centering
    \begin{tikzpicture}
        \node (M) at (0,-1.6) {$\tensM^\mathrm{TT}$};
        \node (Stt) at (-2.2,0) {$\tensS^\mathrm{TT}$};
        \node (Sttq) at (0,0) {$\tensS^\mathrm{TT}/\!\sim$};
        \node (E) at ($(-5,-0.025)+(M)$) {$\mathbb{R}^{n_1\times n_2\times\cdots\times n_d}\times\Sym(n_2n_3\cdots n_d)\times\cdots\times\Sym(n_d)$};
        \node (Ett) at ($(-7.95,0.35)+(M)$) {$\tensE^\mathrm{TT}=$};
        \node (R) at ($(M)+(2.5,-0.025)$) {$\mathbb{R}^{n_1\times n_2\times\cdots\times n_d}_{\leq\vecr^{\mathrm{TT}}}$};

        % \draw[->,thick] (M) -- (R);
        \draw[->,thick] (Stt) -- (Sttq);
        \draw[->,thick] (Sttq) -- (M);
        \draw[->,thick] (Stt) -- (M);
        \draw[left hook->,thick] ($(M)+(-0.5,-0.05)$) -- ($(E)+(3.5,-0.025)$);    
        \draw[->,thick] ($(M)+(0.5,-0.05)$) -- ($(R)+(-1.1,-0.025)$);    

        \node[above] at ($0.5*(Stt)+0.5*(Sttq)-(0.1,0)$) {$\pi$};
        \node[right] at ($0.5*(Sttq)+0.5*(M)$) {$\psi$};
        \node at ($0.5*(Stt)+0.5*(M)-(0.5,0)$) {$\psi\circ\pi$};
        \node[above] at ($0.5*(R)+0.5*(M)+(-0.3,-0.05)$) {$\varphi^\mathrm{TT}$};
    \end{tikzpicture}
    \caption{Diagram of desingularization of tensor train varieties. $\pi$ represents the natural projection}
    \label{fig: communicative TT}
\end{figure}

Specifically, the vertical spaces of $\tensS^\mathrm{TT}$ can be characterized as follows. 
\begin{lemma}
    The vertical space of $\tensS^\mathrm{TT}$ at $x=(\tensU_1,\tensU_2,\dots,\tensU_d)$ can be characterized by
        \begin{equation}
            \vertical_{x}\!\tensS^\mathrm{TT}=\left\{\begin{array}{r}
                (\tensU_1\times_3\matd_1^\top,-\tensU_2\times_1\matd_1+\tensU_2\times_3\matd_2^\top,\dots,-\tensU_d\times_1\matd_{d-1}^\top):~~~~~~\\
                \matd_k\in\mathbb{R}^{r_k\times r_k},\ k\in[d-1]
            \end{array}\right\}.\label{eq: TT vertical}
        \end{equation}
\end{lemma}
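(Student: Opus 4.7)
My plan is to identify $\vertical_{x}\!\tensS^\mathrm{TT}$ with the tangent space at $x$ to the orbit $\mathcal{O}_x=\{\theta^\mathrm{TT}(x,\mata_1,\ldots,\mata_{d-1}):\mata_k\in\GL(r_k),\ k\in[d-1]\}$, compute it explicitly by differentiating curves that start at $x$, and then obtain equality in~\eqref{eq: TT vertical} by a dimension count that uses freeness of $\theta^\mathrm{TT}$.

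For the inclusion ``$\supseteq$'' I would fix an arbitrary tuple $(\matd_1,\ldots,\matd_{d-1})\in\prod_{k=1}^{d-1}\mathbb{R}^{r_k\times r_k}$ and take the smooth curves $\mata_k(t)=\matI_{r_k}+t\matd_k$; since $\GL(r_k)$ is open in $\mathbb{R}^{r_k\times r_k}$, these curves lie in $\GL(r_k)$ for $t$ near $0$ and satisfy $\mata_k(0)=\matI_{r_k}$, $\mata_k'(0)=\matd_k$. Differentiating $t\mapsto\theta^\mathrm{TT}(x,\mata_1(t),\ldots,\mata_{d-1}(t))$ at $t=0$ using the elementary identities $\tfrac{d}{dt}\big|_{0}(\matI+t\matd)^{-1}=-\matd$ and $\tfrac{d}{dt}\big|_{0}(\matI+t\matd)^\top=\matd^\top$, together with bilinearity of the mode products, produces exactly a tuple of the form displayed on the right-hand side of~\eqref{eq: TT vertical}, so every such tuple belongs to $\vertical_{x}\!\tensS^\mathrm{TT}$.

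For the reverse inclusion I would argue by dimensions. Since $\theta^\mathrm{TT}$ is free and proper, as already established above the statement, the orbit $\mathcal{O}_x$ is an embedded submanifold of $\tensS^\mathrm{TT}$ diffeomorphic to $\prod_{k=1}^{d-1}\GL(r_k)$; hence $\dim\vertical_{x}\!\tensS^\mathrm{TT}=\sum_{k=1}^{d-1}r_k^2$. It therefore suffices to prove that the linear map
\[(\matd_1,\ldots,\matd_{d-1})\longmapsto\big(\tensU_1\times_3\matd_1^\top,\ -\tensU_2\times_1\matd_1+\tensU_2\times_3\matd_2^\top,\ \ldots,\ -\tensU_d\times_1\matd_{d-1}\big)\]
is injective, since the image then has the full dimension $\sum_{k=1}^{d-1}r_k^2$ and equality in~\eqref{eq: TT vertical} follows.

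The main obstacle, and the only non-bookkeeping step, is precisely this injectivity check, which I would handle by a reverse induction on $k$. Assuming the image vanishes, the last coordinate $\tensU_d\times_1\matd_{d-1}=0$ yields $\matd_{d-1}\rightunfolding(\tensU_d)=0$ after passing to the mode-$1$ unfolding, and the rank condition $\rank(\rightunfolding(\tensU_d))=r_{d-1}$ built into $\tensS^\mathrm{TT}$ forces $\matd_{d-1}=0$. Propagating through components $k=d-1,d-2,\ldots,2$---at each step the previously established $\matd_k=0$ kills the $\times_3$ term, and the hypothesis $\rank(\rightunfolding(\tensU_k))=r_{k-1}$ kills the surviving $\times_1$ term, producing $\matd_{k-1}=0$---gives $\matd_k=0$ for every $k$, completing the proof.
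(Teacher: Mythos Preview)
Your proof is correct and follows essentially the same route as the paper's: both establish the inclusion by differentiating the curve $t\mapsto\theta^\mathrm{TT}(x,\matI_{r_1}+t\matd_1,\ldots,\matI_{r_{d-1}}+t\matd_{d-1})$ at $t=0$, and both finish by a dimension count. The only difference is that the paper dismisses the dimension of the right-hand side as ``straightforward by computing the degree of freedom,'' whereas you actually supply the injectivity argument via the rank conditions $\rank(\rightunfolding(\tensU_k))=r_{k-1}$ built into $\tensS^\mathrm{TT}$; your reverse induction on $k$ is exactly the clean way to justify that step.
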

\begin{proof}
    Denote the right hand side of~\eqref{eq: TT vertical} by $V$. We aim to show that: 1) for all $\eta\in V$, it holds that $\eta\in\vertical_{x}\!\tensS^\mathrm{TT}$; 2) $\dim(V)=\sum_{k=1}^{d-1}r_k^2$. 

    For the first claim, we consider the smooth curve
    \[\gamma(t)=\theta^\mathrm{TT}(x,\gamma_1(t),\gamma_2(t),\dots,\gamma_{d-1}(t)),\]
    where $\gamma_k(t)=\matI_{r_k}+t\matd_k$ is a smooth curve on $\GL(r_k)$. Since $\gamma(0)=x$ and $\gamma^\prime(0)=\eta$, we have $\eta\in\vertical_{x}\!\tensS^\mathrm{TT}$ from~\cite[Definition 9.24]{boumal2023intromanifolds}. The second claim is straightforward by computing the degree of freedom. \hfill\squareforqed
\end{proof}

For the horizontal space of $\tensS^\mathrm{TT}$, we consider the set 
\begin{equation}
    \label{eq: TT horizontal}
    \horizontal_{x}\!\tensS^\mathrm{TT}=\left\{
    \begin{array}{rl}
        (\dot{\tensU}_1,\dots,\dot{\tensU}_d):&\dot{\tensU}_k\in\mathbb{R}^{r_{k-1}\times n_k\times r_k},\ k\in[d],\\ &\rightunfolding(\dot{\tensU}_k)(\matx_{\geq k+1}^\top\matx_{\geq k+1}^{}\otimes\matI_{n_k})\rightunfolding(\tensU_k)^\top=0,\ k=2,\dots,d
    \end{array}\right\}
\end{equation}
in the light of~\cite[(26)]{uschmajew2013geometry} and the forthcoming analysis, where we set $\matx_{\geq d+1}=1$. Since $\horizontal_{x}\!\tensS^\mathrm{TT}$ is a linear space, and $\dim(\horizontal_{x}\!\tensS^\mathrm{TT})=\sum_{k=1}^d r_{k-1}n_kr_k-\sum_{k=1}^{d-1}r_k^2=\dim(\tensS^\mathrm{TT})-\dim(\vertical_{x}\!\tensS^\mathrm{TT})$, $\horizontal_{x}\!\tensS^\mathrm{TT}$ is able to serve as a horizontal space.

Subsequently, we study the properties of $\psi$ in~\eqref{eq: mapping psi}. The following proposition illustrates that $\psi$ is well-defined on the quotient manifold $\tensS^\mathrm{TT}/\!\sim$.
\begin{proposition}\label{prop: well defined psi}
    The mapping $\psi$ is well-defined on the quotient manifold $\tensS^\mathrm{TT}/\!\sim$.
\end{proposition}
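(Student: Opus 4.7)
The plan is to verify that the value $\psi([\tensU_1,\dots,\tensU_d])$ depends only on the equivalence class, not on the chosen representative. So I would pick any two representatives $(\tensU_1,\dots,\tensU_d),(\tensV_1,\dots,\tensV_d)\in\tensS^\mathrm{TT}$ with $(\tensV_1,\dots,\tensV_d)=\theta^\mathrm{TT}((\tensU_1,\dots,\tensU_d),\mata_1,\dots,\mata_{d-1})$ for some $\mata_k\in\GL(r_k)$, and then check two things: (a) the reconstructed tensor $\llbracket\tensV_1,\dots,\tensV_d\rrbracket$ equals $\llbracket\tensU_1,\dots,\tensU_d\rrbracket$, and (b) each projector $\matP_k$ computed from the $\tensV_j$'s equals the one computed from the $\tensU_j$'s.

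For (a), I would translate the group action back to the slice-wise description of $\psi$. By the definition of mode-$1$ and mode-$3$ products, the slices satisfy $\matv_1(i_1)=\matu_1(i_1)\mata_1$, $\matv_k(i_k)=\mata_{k-1}^{-1}\matu_k(i_k)\mata_k$ for $k=2,\dots,d-1$, and $\matv_d(i_d)=\mata_{d-1}^{-1}\matu_d(i_d)$. Multiplying these slices in order, the factors $\mata_k\mata_k^{-1}$ telescope, yielding
\[
\matv_1(i_1)\matv_2(i_2)\cdots\matv_d(i_d)=\matu_1(i_1)\matu_2(i_2)\cdots\matu_d(i_d),
\]
so $\llbracket\tensV_1,\dots,\tensV_d\rrbracket=\llbracket\tensU_1,\dots,\tensU_d\rrbracket$.

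For (b), the key observation is that $\matP_k$ depends on the core tensors only through the column space of the interface matrix $\matx_{\geq k+1}$, since $\matP_k=\matI-\matx_{\geq k+1}(\matx_{\geq k+1}^\top\matx_{\geq k+1}^{})^{-1}\matx_{\geq k+1}^\top$ is the orthogonal projector onto $\Span(\matx_{\geq k+1})^\perp$. Using the same slice-wise description of the group action to assemble the rows of the interface matrix gives
\[
\matv_{k+1}(i_{k+1})\cdots\matv_d(i_d)=\mata_k^{-1}\matu_{k+1}(i_{k+1})\cdots\matu_d(i_d),
\]
which, after transposing to match the row convention of interface matrices, yields $\matx_{\geq k+1}^\tensV=\matx_{\geq k+1}^\tensU\mata_k^{-\T}$. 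Because $\mata_k^{-\T}\in\GL(r_k)$, this identity preserves the column space, and hence $\matP_k$ is unchanged for each $k\in[d-1]$.

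Combining (a) and (b) shows that $\psi$ takes the same value on every representative of the class, which is exactly well-definedness on $\tensS^\mathrm{TT}/\!\sim$. I do not expect any real obstacle here; the proof is a direct telescoping calculation made possible by the specific form of the group action $\theta^\mathrm{TT}$, which is precisely designed so that neighbouring actions cancel in the TT product and so that the rightmost residual factor $\mata_k^{-\T}$ multiplies the interface matrix on the right, leaving the right column space invariant.
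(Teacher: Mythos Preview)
Your proof is correct and is the standard argument for well-definedness on a quotient: you fix two representatives related by the group action and verify, via the telescoping of the gauge matrices $\mata_k$, that both the reconstructed tensor and each projector $\matP_k$ coincide.

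The paper, however, argues the \emph{converse} implication: it assumes $\psi(x)=\psi(\tilde{x})$ and deduces $x\sim\tilde{x}$ by recovering invertible matrices $\mata_k$ from the equality of interface projectors and of $\tensX$. Strictly speaking, that direction is injectivity of the induced map on $\tensS^\mathrm{TT}/\!\sim$, not well-definedness; the paper seems to regard the direction you proved as immediate (hidden in the sentence ``$\psi$ is well-defined on the parameter space $\tensS^\mathrm{TT}$'') and instead records the injectivity argument under this proposition because it is invoked right afterwards to conclude that $\psi$ is bijective in the homeomorphism lemma. So your approach is the correct proof of the proposition as literally stated, while the paper's proof supplies the complementary fact used downstream; together the two directions show that the fibers of $\psi\circ\pi$ are exactly the $\sim$-classes.
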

\begin{proof}
    Since $\psi$ is well-defined on the parameter space $\tensS^\mathrm{TT}$, it suffices to prove that $\psi(x)=\psi(\tilde{x})=(\tensX,\matP_1,\dots,\matP_{d-1})$ implies $x\sim\tilde{x}$ for $x=(\tensU_1,\dots,\tensU_d),\tilde{x}=(\tilde{\tensU}_1,\dots,\tilde{\tensU}_d)\in \tensS^\mathrm{TT}$. 
    
    Let $\matx_{\geq k+1}$ and $\tilde{\matx}_{\geq k+1}$ be the interface matrices constructed by parameters $\tensU_{k+1},\dots,\tensU_d$ and $\tilde{\tensU}_{k+1},\dots,\tilde{\tensU}_d$ via~\eqref{eq: recursive interface} respectively for $k\in[d-1]$. Since $\psi(x)=\psi(\tilde{x})$ and $\matx_{\geq k+1}$ and $\tilde{\matx}_{\geq k+1}$ are of full rank, there exist $\mata_k\in\GL(r_k)$, such that $\matx_{\geq k+1}=\tilde{\matx}_{\geq k+1}\mata_k$ for $k\in[d-1]$. We obtain recursively from~\eqref{eq: recursive interface} that $\tensU_d=\tilde{\tensU}_d\times_1\mata_{d-1}^{-1}$, $\tensU_k=\tilde{\tensU}_k\times_1\mata_{k-1}^{-1}\times_3\mata_k^\top$ for $k=2,3,\dots,d-1$. For $\tensU_1$ and $\tilde{\tensU}_1$, it holds that $\leftunfolding(\tensU_1)=\matx_{\langle 1\rangle}\matx_{\geq 2}=\matx_{\langle 1\rangle}\tilde{\matx}_{\geq 2}\mata_1=\leftunfolding(\tilde{\tensU}_1)\mata_1$. Consequently, we obtain that $x=\theta^\mathrm{TT}(\tilde{x},\mata_1,\dots,\mata_{d-1})$. \hfill\squareforqed
\end{proof}

\begin{lemma}
    The mapping $\psi:\tensS^\mathrm{TT}/\!\sim\to\tensM^\mathrm{TT}$ is a homeomorphism.
\end{lemma}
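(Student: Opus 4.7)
The plan is to realize $\psi$ as the map induced on the quotient by a smooth mapping $\tilde\psi:\tensS^\mathrm{TT}\to\tensM^\mathrm{TT}$, and then to verify separately that $\psi$ is a continuous bijection and that its inverse is continuous. First I would set $\tilde\psi(\tensU_1,\dots,\tensU_d):=(\llbracket\tensU_1,\dots,\tensU_d\rrbracket,\matP_1,\dots,\matP_{d-1})$ with $\matP_k=\matI_{n_{k+1}\cdots n_d}-\matx_{\geq k+1}(\matx_{\geq k+1}^\top\matx_{\geq k+1})^{-1}\matx_{\geq k+1}^\top$. This map is smooth because the rank conditions defining $\tensS^\mathrm{TT}$ together with~\eqref{eq: recursive interface} force $\matx_{\geq k+1}^\top\matx_{\geq k+1}$ to be invertible everywhere on $\tensS^\mathrm{TT}$, and Proposition~\ref{prop: well defined psi} shows $\tilde\psi$ is constant on the $\sim$-equivalence classes. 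Consequently $\psi=\tilde\psi\circ\pi^{-1}$ is well-defined and continuous, since the quotient projection $\pi$ is an open smooth submersion.

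Next, I would settle bijectivity. Surjectivity is immediate from the very definition of $\tensM^\mathrm{TT}$. Injectivity is essentially the content of the second half of the proof of Proposition~\ref{prop: well defined psi}, which already exhibits an explicit recursive construction of the transition matrices $\mata_k\in\GL(r_k)$ relating any two preimages of the same point via $\theta^\mathrm{TT}$. Thus $\psi^{-1}$ exists as a set-theoretic map, and the only remaining task is to establish its continuity.

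For continuity of $\psi^{-1}$, my plan is to construct continuous local sections of $\tilde\psi$ near any $p_0=(\tensX_0,\matP_1^0,\dots,\matP_{d-1}^0)\in\tensM^\mathrm{TT}$. Fix a representative $(\tensU_1^0,\dots,\tensU_d^0)\in\tensS^\mathrm{TT}$ with interface matrices $\matx_{\geq k+1}^0$, and on a small neighbourhood $U$ of $p_0$ define
\[
\matY_k(\matP_k):=(\matI-\matP_k)\matx_{\geq k+1}^0\Bigl((\matx_{\geq k+1}^0)^\top(\matI-\matP_k)\matx_{\geq k+1}^0\Bigr)^{-1/2},\qquad k=d-1,\dots,1,
\]
so that $\mathrm{range}(\matY_k)=\mathrm{range}(\matI-\matP_k)$ and $\matY_k$ depends smoothly on $\matP_k$; the inverse square root is well-defined on $U$ since at $p_0$ the bracketed matrix equals $(\matx_{\geq k+1}^0)^\top\matx_{\geq k+1}^0\succ 0$. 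I would then recover the cores by a right-to-left sweep: use~\eqref{eq: recursive interface} to solve $\matY_{k-1}=(\matY_k\otimes\matI_{n_k})\rightunfolding(\tensU_k)^\top$ via the left pseudoinverse of $\matY_k\otimes\matI_{n_k}$ for $k=d,d-1,\dots,2$, and set $\leftunfolding(\tensU_1)=\matx_{\langle 1\rangle}\matY_1$. Each step is continuous on $U$, and a direct computation confirms $\tilde\psi(\sigma(p))=p$.

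The hard part will be verifying that the reconstructed $d$-tuple actually lies in $\tensS^\mathrm{TT}$, i.e., that each recovered $\rightunfolding(\tensU_k)$ (for $k\geq 2$) has row rank $r_{k-1}$, and that the interface matrices produced by the reconstructed cores coincide with the $\matY_k$ so that the projector data is genuinely reproduced. Both statements hold at the reference point $p_0$ by construction; extending them to a neighbourhood reduces to the openness of the full-rank locus together with continuity of the left pseudoinverse on it, but care is needed to propagate the rank condition consistently through all $d$ cores. Once this is in place, the existence of continuous local sections makes $\tilde\psi$ a topological quotient onto $\tensM^\mathrm{TT}$, and hence $\psi$ a homeomorphism.
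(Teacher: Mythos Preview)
Your proposal is correct and follows essentially the same route as the paper: both obtain bijectivity from Proposition~\ref{prop: well defined psi} and prove continuity of $\psi^{-1}$ by reconstructing a right-orthogonal representative from an orthonormal basis of $\mathrm{range}(\matI-\matP_k)$ via the same recursion $\rightunfolding(\tensU_k)=\matY_{k-1}^\top(\matY_k\otimes\matI_{n_k})$, $\leftunfolding(\tensU_1)=\matx_{\langle 1\rangle}\matY_1$. Your ``hard part'' dissolves once you observe that the inclusions $\mathrm{range}(\matY_{k-1})\subseteq\mathrm{range}(\matY_k\otimes\matI_{n_k})$ and the identity $\matx_{\langle 1\rangle}\matP_1=0$ hold at \emph{every} point of $\tensM^\mathrm{TT}$ (they follow immediately from~\eqref{eq: recursive interface} applied to any defining TT decomposition), so the reconstructed cores are right-orthogonal with interface matrices exactly $\matY_k$ globally, not merely near $p_0$; in fact your explicit formula for $\matY_k$ is a cleaner justification of the step the paper handles by asserting the existence of convergent Stiefel lifts $\matv_k^{(t)}\to\matv_k^*$.
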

\begin{proof}
    Since $\psi(\tensS^\mathrm{TT}/\!\sim)=\psi(\tensS^\mathrm{TT})=\tensM^\mathrm{TT}$, $\psi$ is surjective and thus bijective from Proposition~\ref{prop: well defined psi}, i.e., $\psi^{-1}$ exists. 
    
    Next, we aim to show that $\psi^{-1}$ is continuous. Given $(\tensX,\matP_1,\matP_2,\dots,\matP_{d-1})\in\tensM^\mathrm{TT}$, since $\psi$ is bijective, we construct a representative $(\tensU_1,\tensU_2,\dots,\tensU_d)$ of the preimage $\psi^{-1}(\tensX,\matP_1,\matP_2,\dots,\matP_{d-1})$, which satisfies $\rightunfolding(\tensU_k)\rightunfolding(\tensU_k)^\top=\matI_{r_{k-1}}$ for $k=2,3,\dots,d$, i.e., the right orthogonality.

    We observe that there exist $\matv_k\in\St(r_k,n_{k+1}\dots n_d)$, such that $\matP_k=\matI_{n_{k+1}\dots n_d}-\matv_k^{}\matv_k^\top$ for $k\in[d-1]$. Subsequently, the representative $(\tensU_1,\tensU_2,\dots,\tensU_d)$ can be constructed recursively from~\eqref{eq: recursive interface} by
    \begin{equation}
        \label{eq: TT params}
        \begin{aligned}
            \rightunfolding(\tensU_d)&=\matv_{d-1}^\top,\
            \rightunfolding(\tensU_{k})=\matv_{k-1}^\top(\matx_{\geq k+1}\otimes\matI_{n_k}),\quad\text{and}\quad
            \leftunfolding(\tensU_1)=\matx_{\langle 1\rangle}\matx_{\geq 2}
        \end{aligned}
    \end{equation}
    for $k=2,3,\dots,d$. 
    Note that $\tensX$ is only applied to $\leftunfolding(\tensU_1)=\matx_{\langle 1\rangle}\matx_{\geq 2}$ in~\eqref{eq: TT params}. We construct the tensor $\tensU_{k}$ by using $\matv_{k-1}=(\matx_{\geq k+1}\otimes\matI_{n_k})\rightunfolding(\tensU_{k})^\top$ and the interface matrix $\matx_{\geq k+1}$ which has been constructed by~\eqref{eq: recursive interface} via $\tensU_{k+1},\tensU_{k+2},\dots,\tensU_d$. We observe from~\eqref{eq: TT params} that $(\tensU_1,\dots,\tensU_d)$ is continuous with respect to $(\tensX,\matv_1,\dots,\matv_{k-1})$. 

    Consider a sequence $\{(\tensX^{(t)},\matP_1^{(t)},\matP_2^{(t)},\dots,\matP_{d-1}^{(t)})\}_{t=0}^\infty\subseteq\tensM^{\mathrm{TT}}$ which converges to $(\tensX^*,\matP_1^*,\matP_2^*,\dots,\matP_{d-1}^*)$. Since $\matP_k^{(t)}$ converges to $\matP_k^*$, there exists $\matv_k^{(t)},\matv_k^*\in\St(r_k,n_{k+1}\dots n_d)$, such that $\matP_k^{(t)}=\matI_{n_{k+1}\dots n_d}-\matv_k^{(t)}(\matv_k^{(t)})^\top$, $\matP_k^*=\matI_{n_{k+1}\dots n_d}-\matv_k^*(\matv_k^*)^\top$ and $\matv_k^{(t)}$ converges to $\matv_k^*$ for $k\in[d-1]$. We consider the preimages $(\tensU_1^{(t)},\dots,\tensU_d^{(t)})$ and $(\tensU_1^*,\dots,\tensU_d^*)$ constructed by~\eqref{eq: TT params}. It follows from the continuity that $\tensU_k^{(t)}$ converges to $\tensU_k^*$. Consequently, $\psi$ is bijective with continuous inverse, i.e., homeomorphism. \hfill\squareforqed
\end{proof}

\begin{theorem}
    The parametrization $\tensM^\mathrm{TT}$ is an embedded submanifold of $\tensE^\mathrm{TT}$ with dimension $\dim(\tensM^\mathrm{TT})=\dim(\tensS^\mathrm{TT}/\!\sim)=\sum_{k=1}^{d} r_{k-1}n_kr_k-\sum_{k=1}^{d-1}r_k^2$.
\end{theorem}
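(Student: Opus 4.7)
The plan is to mirror the Tucker-case argument: since the homeomorphism $\psi:\tensS^\mathrm{TT}/\!\sim\,\to\tensM^\mathrm{TT}$ has already been established, it remains to show that $\psi$ is a smooth immersion, after which Proposition~5.2 of Lee's book guarantees that $\tensM^\mathrm{TT}=\psi(\tensS^\mathrm{TT}/\!\sim)$ is an embedded submanifold of $\tensE^\mathrm{TT}$ whose dimension equals that of the quotient manifold $\tensS^\mathrm{TT}/\!\sim$.

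First, I would compute the dimension of $\tensS^\mathrm{TT}/\!\sim$. The rank conditions $\rank(\rightunfolding(\tensU_k))=r_{k-1}$ for $k=2,\dots,d$ are open, so $\tensS^\mathrm{TT}$ is an open submanifold of $\prod_{k=1}^d\mathbb{R}^{r_{k-1}\times n_k\times r_k}$ and has dimension $\sum_{k=1}^{d}r_{k-1}n_kr_k$. Since the group action $\theta^\mathrm{TT}$ has already been shown to be smooth, free, and proper on $\tensS^\mathrm{TT}$, the quotient manifold theorem (see \cite[Theorem 21.10]{lee2012smooth}) yields that $\tensS^\mathrm{TT}/\!\sim$ is a smooth manifold of dimension $\dim(\tensS^\mathrm{TT})-\dim(\vertical_x\!\tensS^\mathrm{TT})=\sum_{k=1}^d r_{k-1}n_kr_k-\sum_{k=1}^{d-1}r_k^2$, using the previously derived characterization~\eqref{eq: TT vertical} of the vertical space.

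Second, I would show that $\psi$ is a smooth immersion. Smoothness of $\psi\circ\pi$ is clear: the first component $\llbracket\tensU_1,\dots,\tensU_d\rrbracket$ is a polynomial contraction, and each $\matP_k=\matI-\matx_{\geq k+1}(\matx_{\geq k+1}^\top\matx_{\geq k+1})^{-1}\matx_{\geq k+1}^\top$ is smooth in the parameters because $\matx_{\geq k+1}$ has full column rank on $\tensS^\mathrm{TT}$. Under the identification $\tangent_{[x]}(\tensS^\mathrm{TT}/\!\sim)\cong\horizontal_x\!\tensS^\mathrm{TT}$, the hard part is verifying that $\mathrm{D}\psi([x])|_{\horizontal_x\!\tensS^\mathrm{TT}}$ is injective. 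Assume $\dot{x}=(\dot{\tensU}_1,\dots,\dot{\tensU}_d)\in\horizontal_x\!\tensS^\mathrm{TT}$ maps to $0$, so both $\sum_{k=1}^d\llbracket\tensU_1,\dots,\dot{\tensU}_k,\dots,\tensU_d\rrbracket=0$ and $\mathrm{D}\matP_k=0$ for every $k\in[d-1]$. The relation $\matP_{\geq k+1}\matx_{\geq k+1}=\matx_{\geq k+1}$ combined with $\dot{\matP}_{\geq k+1}=0$ forces $\dot{\matx}_{\geq k+1}$ to lie in $\Span(\matx_{\geq k+1})$; pushed through the recursion~\eqref{eq: recursive interface}, this translates to $\rightunfolding(\dot{\tensU}_k)(\matx_{\geq k+1}^\top\matx_{\geq k+1}\otimes\matI_{n_k})\rightunfolding(\tensU_k)^\top$ being expressible in terms of vertical directions. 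But the horizontal constraint in~\eqref{eq: TT horizontal} says exactly that this quantity vanishes, and combined with $\matx_{\geq k+1}^\top\matx_{\geq k+1}\succ 0$ and $\rightunfolding(\tensU_k)$ having full row rank, one concludes $\rightunfolding(\dot{\tensU}_k)=0$ for $k=2,\dots,d$. The vanishing tensor contraction together with $\rank(\matx_{\geq 2})=r_1$ then forces $\dot{\tensU}_1=0$ as well, giving injectivity. This untangling of horizontality versus the kernel of $\mathrm{D}\psi$ will be the main technical obstacle, because several indices interact simultaneously through the interface recursion.

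Finally, having shown $\psi$ is a smooth immersion and a homeomorphism onto its image, \cite[Proposition~5.2]{lee2012smooth} yields that $\tensM^\mathrm{TT}$ is an embedded submanifold of $\tensE^\mathrm{TT}$, and the dimension coincides with $\dim(\tensS^\mathrm{TT}/\!\sim)=\sum_{k=1}^{d}r_{k-1}n_kr_k-\sum_{k=1}^{d-1}r_k^2$, completing the proof.
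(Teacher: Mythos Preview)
Your overall strategy---show $\psi$ is a smooth immersion on the horizontal space and invoke \cite[Proposition~5.2]{lee2012smooth}---is exactly the paper's. The dimension count and smoothness remarks are fine. The gap is in the injectivity step. You correctly extract $\dot{\matx}_{\geq k+1}\in\Span(\matx_{\geq k+1})$ from $\dot{\matP}_k=0$, but your next claim, that the horizontal condition $\rightunfolding(\dot{\tensU}_k)(\matx_{\geq k+1}^\top\matx_{\geq k+1}\otimes\matI_{n_k})\rightunfolding(\tensU_k)^\top=0$ together with $\matx_{\geq k+1}^\top\matx_{\geq k+1}\succ 0$ and $\rightunfolding(\tensU_k)$ full row rank forces $\rightunfolding(\dot{\tensU}_k)=0$, is false as stated: that equation is only $r_{k-1}^2$ linear constraints on the $r_{k-1}n_kr_k$ entries of $\rightunfolding(\dot{\tensU}_k)$, so plenty of nonzero $\dot{\tensU}_k$ satisfy it. What is missing is the backward induction on $k$ that turns the $\Span$ information into an actual restriction on $\dot{\tensU}_k$.

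The paper does this explicitly: assuming $\dot{\tensU}_{k+1}=\dots=\dot{\tensU}_d=0$, the differential of~\eqref{eq: recursive interface} collapses to $\dot{\matx}_{\geq k}=(\matx_{\geq k+1}\otimes\matI_{n_k})\rightunfolding(\dot{\tensU}_k)^\top$, so the explicit form $\tilde{\matP}_{k-1}=-2\sym((\matx_{\geq k+1}\otimes\matI_{n_k})\rightunfolding(\dot{\tensU}_k)^\top\matx_{\geq k}^\top)=0$ isolates $\dot{\tensU}_k$. Multiplying on the left by $\matx_{\geq k}^\top=\rightunfolding(\tensU_k)(\matx_{\geq k+1}\otimes\matI_{n_k})^\top$ kills one of the two symmetric terms precisely by horizontality, leaving $\matx_{\geq k}^\top\matx_{\geq k}\,\rightunfolding(\dot{\tensU}_k)(\matx_{\geq k+1}\otimes\matI_{n_k})^\top=0$; now both outer factors have full rank and $\dot{\tensU}_k=0$ follows. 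Equivalently, in your language: the inductive hypothesis plus $\dot{\matx}_{\geq k}\in\Span(\matx_{\geq k})$ forces $\rightunfolding(\dot{\tensU}_k)=\matD_k\rightunfolding(\tensU_k)$ for some $\matD_k$, and only \emph{then} does the horizontal condition give $\matD_k\,\matx_{\geq k}^\top\matx_{\geq k}=0$, hence $\matD_k=0$. You need to make this downward recursion from $k=d$ explicit; without it, horizontality alone cannot close the argument.
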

\begin{proof}
    It suffices to prove that $\mathrm{D}\psi(\tensU_1,\tensU_2,\dots,\tensU_d)|_{\horizontal_{(\tensU_1,\tensU_2,\dots,\tensU_d)}\!S}$ is injective, i.e., $\mathrm{D}\psi(\tensU_1,\tensU_2,\dots,\tensU_d)[\dot{\tensU}_1,\dot{\tensU}_2,\dots,\dot{\tensU}_d]=0$ and $(\dot{\tensU}_1,\dot{\tensU}_2,\dots,\dot{\tensU}_d)\in\horizontal_{(\tensU_1,\tensU_2,\dots,\tensU_d)}\!\tensS^\mathrm{TT}$ imply that $(\dot{\tensU}_1,\dot{\tensU}_2,\dots,\dot{\tensU}_d)=0$.

    Denote $\mathrm{D}\psi(\tensU_1,\tensU_2,\dots,\tensU_d)[\dot{\tensU}_1,\dot{\tensU}_2,\dots,\dot{\tensU}_d]=(\tilde{\tensX},\tilde{\matP}_1,\tilde{\matP}_2,\dots,\tilde{\matP}_{d-1})$. We prove $\dot{\tensU}_k=0$ for $k=2,3,\dots,d$ recursively. Since $\tilde{\matP}_{d-1}=-2\sym(\rightunfolding(\dot{\tensU}_d)^\top\rightunfolding(\tensU_d))=0$ and $\rightunfolding(\dot{\tensU}_{d})\rightunfolding(\tensU_{d})^\top=0$ from~\eqref{eq: TT horizontal}, we have $\dot{\tensU}_{d}=0$. If $\dot{\tensU}_{k}=0$ holds for all $k=\ell+1,\dots,d$, we aim to show that $\dot{\tensU}_{k}=0$ holds for $k=\ell$ with $\ell\geq 2$. We observe that 
    \[\tilde{\matP}_{\ell-1}=-2\sym((\matx_{\geq \ell+1}\otimes\matI_{n_\ell})\rightunfolding(\dot{\tensU}_\ell)^\top\matx_{\geq \ell}^\top)=0,\]
    where we use the fact that $\dot{\tensU}_{k}=0$ for all $k=\ell+1,\dots,d$. Multiplying $\matx_{\geq \ell}^\top=\rightunfolding(\tensU_\ell)(\matx_{\geq \ell+1}\otimes\matI_{n_\ell})^\top$ on the left, we obtain $\matx_{\geq \ell}^\top\matx_{\geq \ell}^{}\rightunfolding(\dot{\tensU}_\ell)(\matx_{\geq \ell+1}\otimes\matI_{n_\ell})^\top=0$. Since $\matx_{\geq \ell}$ and $\matx_{\geq \ell+1}$ are of full rank, it holds that $\dot{\tensU}_\ell=0$.

    For $\dot{\tensU}_1$, we observe that $\tilde{\tensX}=\llbracket\dot{\tensU}_1,\tensU_2,\dots,\tensU_d\rrbracket=0$, or $\leftunfolding(\dot{\tensU}_1)\matx_{\geq 2}^\top=0$ equivalently. It follows from $\rank(\matx_{\geq 2})=r_1$ that $\dot{\tensU}_1=0$. Consequently, $\psi$ is an injective immersion and thus $\tensM^\mathrm{TT}$ is a Riemannian submanifold of $\tensE^\mathrm{TT}$ with $\dim(\tensM^\mathrm{TT})=\dim(\tensS^\mathrm{TT}/\!\sim)$. \hfill\squareforqed
\end{proof}

\begin{remark}
    The new parametrization~\eqref{eq: desingularization of TT} is constructed based on the rank constraints on the right unfolding matrices of core tensors. One may consider other types of rank constraints. Notably, the proposed parametrization~\eqref{eq: desingularization of TT} for TT varieties differs from the desingularization of matrix and Tucker tensor varieties. Furthermore, it preserves the structure of the TT format, avoiding the explicit formulation of large tensors. The Riemannian geometry of $\tensM^\mathrm{TT}$ can be developed in a similar fashion.
\end{remark}

\section{Numerical experiments}\label{sec: experiments}
In this section, we compare the proposed methods with other candidates on the tensor completion problem over the set of bounded-rank Tucker tensors. Specifically, given a partially observed tensor $\tensA\in\mathbb{R}^{n_1\times n_2\cdots\times n_d}$ on an index set $\Omega\subseteq[n_1]\times [n_2]\times\cdots\times[n_d]$. The goal of Tucker tensor completion is to recover the tensor $\tensA$ from its entries on $\Omega$ based on the low-rank Tucker decomposition. The optimization problem can be formulated on the Tucker tensor variety~$\mathbb{R}^{n_1\times\cdots\times n_d}_{\leq\vecr}$, i.e., 
\begin{equation}\label{eq: LRTC P}
    \begin{aligned}
        \min_\tensX\ \ & \frac12\|\proj_\Omega(\tensX)-\proj_\Omega(\tensA)\|_\frob^2\\
        \subjectto\ \ & \quad \quad \tensX\in\mathbb{R}^{n_1\times\cdots\times n_d}_{\leq\vecr},
    \end{aligned}
\end{equation}
where $\proj_\Omega$ is the projection operator onto $\Omega$, i.e, $\proj_\Omega(\tensX)(i_1,\dots,i_d)=\tensX(i_1,\dots,i_d)$ if~$(i_1,\dots,i_d)\in\Omega$, otherwise $\proj_\Omega(\tensX)(i_1,\dots,i_d)=0$ for $\tensX\in\mathbb{R}^{n_1\times n_2\times\cdots\times n_d}$. The \emph{sampling rate} is denoted by $p:=|\Omega|/(n_1n_2\cdots n_d)$. 

All experiments are performed on a workstation with two Intel(R) Xeon(R) Processors Gold 6330 (at 2.00GHz$\times$28, 42M Cache) and 512GB of RAM running Matlab R2019b under Ubuntu 22.04.3. The codes of the proposed methods are available at \href{https://github.com/JimmyPeng1998}{https://github.com/JimmyPeng1998.}

\subsection{Test on different parametrizations}\label{subsec: diff params}
We test the performance of Riemannian methods under different parametrizations on synthetic data. Specifically, by choosing a search space $\tensM$ and a smooth mapping $\varphi$ such that $\varphi(\tensM)=\mathbb{R}^{n_1\times n_2\times\cdots\times n_d}_{\leq\vecr}$, we reformulate the completion problem~\eqref{eq: LRTC P} to 
\begin{equation}
    \label{eq: LRTC Q}
    \min_{x\in \tensM} g(x)=\frac12\|\proj_\Omega(\varphi(x))-\proj_\Omega(\tensA)\|_\frob^2.
\end{equation}  
There are various choices of the search space $\tensM$ and mapping $\varphi$ in~\eqref{eq: LRTC Q}. For instance, a tensor $\tensX\in\mathbb{R}^{n_1\times\cdots\times n_d}_{\leq\vecr}$ can be represented by $x=(\tensG,\matU_1,\dots,\matu_d)\in\tensM$ via Tucker decomposition $\tensX=\varphi(x)=\tensG\times_{k=1}^d\matu_k$; see subsection~\ref{subsec: params via Tucker} for details. For the sake of numerical stability, we impose the orthogonality conditions and set the search space $\tensM$ by
\[\mathbb{R}^{r_1\times r_2\times\cdots\times r_d}\times\St(r_1,n_1)\times\St(r_2,n_2)\times\cdots\times\St(r_d,n_d).\]
We refer to this parametrization as ``Tucker''. We also consider the quotient manifold
\[\mathbb{R}^{r_1\times\cdots\times r_d}\times\St(r_1,n_1)\times\cdots\times\St(r_d,n_d)/(\mathcal{O}(r_1)\times\cdots\times\mathcal{O}(r_d))\]
in~\cite{kasai2016low}. The parametrization is denoted by ``quotient''. The proposed desingularization $\tensM(\mathbb{R}^{n_1\times\cdots\times n_d},\vecr)$ is denoted by ``desing''. To solve~\eqref{eq: LRTC Q}, we adopt the Riemannian gradient descent (RGD), Riemannian conjugate gradient (RCG) methods for all parametrizations.

Given $\vecr^*=(r_1^*,r_2^*,\dots,r_d^*)$, we consider a synthetic low-rank tensor $\tensA$ generated by 
\[\tensA=\tensG^*\times_{k=1}^d\matu_k^*,\]
where the entries of $\tensG^*\in\mathbb{R}^{r_1^*\times\cdots\times r_d^*}$ and $\matu_k^*\in\mathbb{R}^{n_k^{}\times r_k^*}$ are sampled from the standard normal distribution $\mathcal{N}(0,1)$. $\matu_k^*$ is orthogonalized by the QR decomposition. We set $d=3$, $(n_1,n_2,n_3)=(400,400,400)$, sampling rate $p=0.01$, the size of test set $|\Gamma|=pn_1n_2n_3$, and $\vecr^*=(6,6,6)$. The initial guess $x^{(0)}=(\tensG^{(0)},\matu_1^{0},\dots,\matu_d^{0})$ for ``Tucker'' and ``quotient'' parametrization is generated in a same fashion. The initial guess for desingularization is $x^{(0)}=(\tensG^{(0)}\times_{k=1}^d\matu_k^{(0)},\matI_{n_1}-\matu_1^{(0)}(\matu_1^{(0)})^\top,\matI_{n_2}-\matu_2^{(0)}(\matu_2^{(0)})^\top,\dots,\matI_{n_d}-\matu_d^{(0)}(\matu_d^{(0)})^\top)$. A method is terminated if the Riemannian gradient satisfies $\|\grad g(x^{(t)})\|\leq 10^{-13}$ or reaches the maximum iteration $500$. The performance of all methods is evaluated by the training and test errors
\[\varepsilon_{\Omega}^{\tensM}(x):=\frac{\|\proj_\Omega(\varphi(x))-\proj_\Omega(\tensA)\|_\frob}{\|\proj_\Omega(\tensA)\|_\frob}\quad\text{and}\quad\varepsilon_{\Gamma}^{\tensM}(x):=\frac{\|\proj_\Gamma(\varphi(x))-\proj_\Gamma(\tensA)\|_\frob}{\|\proj_\Gamma(\tensA)\|_\frob},\]
where $\Gamma$ is a test set different from the training set $\Omega$ with $|\Gamma|=|\Omega|$. Denote the singular values of the unfolding matrices $\mata_{(k)}$ and $(\varphi(x^{(t)}))_{(k)}$ by $\{\sigma_{i,k}^*\}$ and $\{\sigma_{i,k}^{(t)}\}$ respectively. We also evaluate the performance by the error of singular values
\[\varepsilon(x^{(t)})=\sum_{k=1}^d\sum_{i=1}^{r_k}|\sigma_{i,k}^{(t)}-\sigma_{i,k}^*|.\]
To ensure a fair comparison, Riemannian methods for~\eqref{eq: problem (Q-desing)} are implemented on large matrices and tensors $(\tensX, \matP_1, \dots, \matP_d)$ for the sake of compatibility with Manopt toolbox v7.1.0~\cite{boumal2014manopt}. As a result, we only report the numerical performance in terms of iterations.

\paragraph{Test with unbiased rank}
First, we examine the performance of Tucker-based methods with unbiased rank, i.e., $\vecr=\vecr^*=(6,6,6)$. Figure~\ref{fig: synthetic true rank} reports the training and test error of RGD and RCG methods under all parametrizations. We observe that RGD and RCG methods under all parametrizations successfully recover the underlying low-rank tensor $\tensA$, which is reflected on the error of singular values. Moreover, RGD-desing and RCG-desing methods are comparable to RGD-quotient and RCG-quotient, and perform better than RGD-Tucker and RCG-Tucker methods. 
 
\begin{figure}[htbp]
    \centering
    \includegraphics[width=\textwidth]{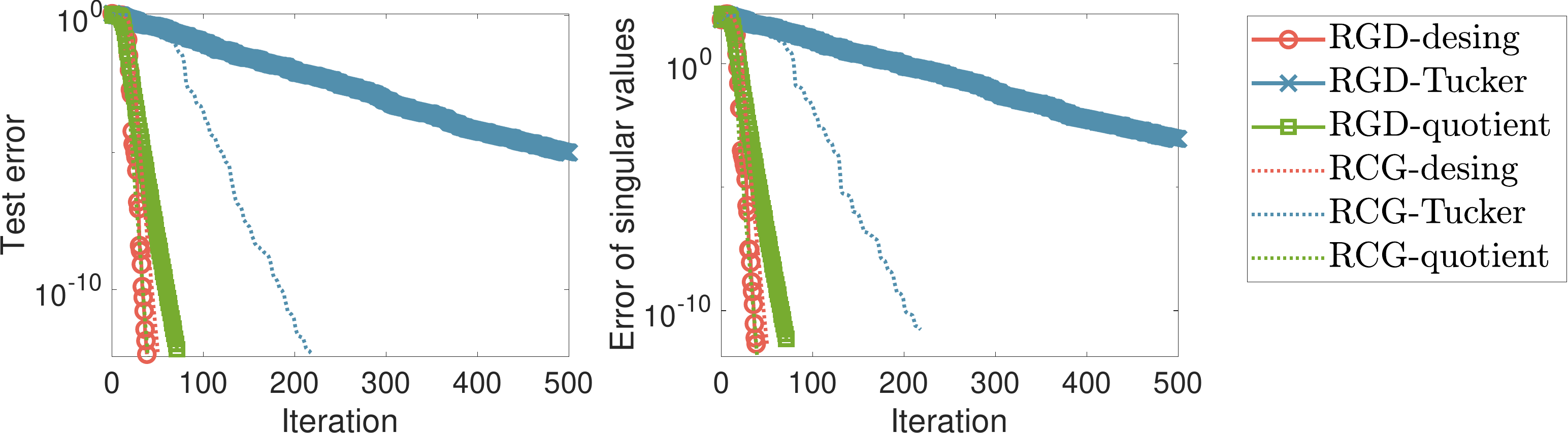}
    \caption{Errors of RGD and RCG under different parametrizations with unbiased rank parameter. Left: test error. Right: errors of singular values}
    \label{fig: synthetic true rank}
\end{figure}

\paragraph{Test with over-estimated rank}
We test the performance of RGD and RCG under over-estimated rank parameter $\vecr=(10,10,10)>\vecr^*$. The numerical results are reported in Fig.~\ref{fig: synthetic over rank}. We observe from Fig.~\ref{fig: synthetic over rank} (left) that RGD-desing and RCG-desing methods successfully recover the data tensor while RGD and RCG methods under other parametrizations fail due to the over-estimated rank parameter, which is reflected on the error of singular values of unfolding matrices in Fig.~\ref{fig: synthetic over rank} (right). Specifically, we observe that the singular values of unfolding matrices of $\varphi(x^{(t)})$ fail to converge to the truth if $\{x^{(t)}\}_{t\geq 0}$ is generated by methods under ``quotient'' and ``Tucker'' parametrizations. On the contrary, for the iterates generated by RGD-desing and RCG-desing, all the singular values of all unfolding matrices of $\varphi(x^{(t)})$ converge to the truth.

\begin{figure}[htbp]
    \centering
    \includegraphics[width=\textwidth]{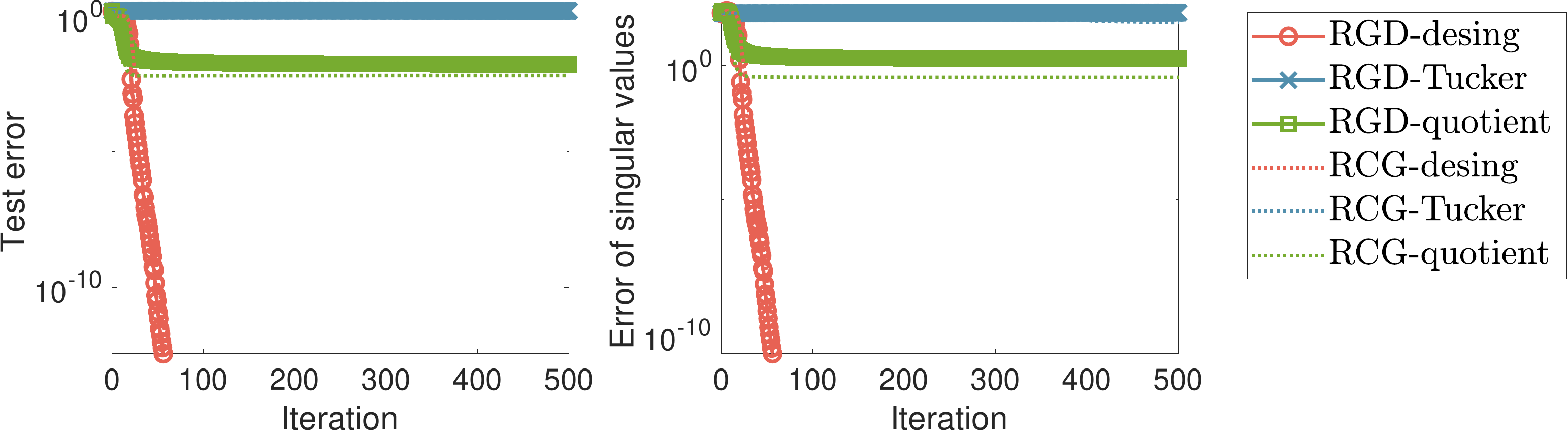}
    \caption{Errors of RGD and RCG under different parametrizations with over-estimated rank parameter. Left: test error. Right: errors of singular values}
    \label{fig: synthetic over rank}
\end{figure}

\subsection{Test on other methods}
Subsequently, we compare the proposed methods with other candidates. In preliminary numerical experiments, we observe that RTR-desing is less efficient than RGD-desing and RCG-desing methods due to the large computational cost in solving the subproblem~\eqref{eq: RTR subproblem}. Therefore, we only compare the RGD-desing and RCG-desing methods with others. Nevertheless, the code for RTR-desing is available for interested readers. We compare the proposed methods with: 1) a Riemannian conjugate gradient method\footnote{GeomCG toolbox: \url{https://www.epfl.ch/labs/anchp/index-html/software/geomcg/}.} (GeomCG)~\cite{kressner2014low}; 2) a Riemannian conjugate gradient method\footnote{Available at: \url{https://bamdevmishra.in/codes/tensorcompletion/}.} on quotient manifold under a preconditioned metric (RCG-quotient)~\cite{kasai2016low} for optimization on fixed-rank manifolds; 3) the gradient-related approximate projection method (GRAP) and 4) the Tucker rank-adaptive method\footnote{Available at: \url{https://github.com/JimmyPeng1998/TRAM}.} (TRAM) for optimization on Tucker tensor varieties~\cite{gao2023low}.

\paragraph{Default settings and implementation details}
Given a specific objective function $f$ and $g$ in problems~\eqref{eq: LRTC P} and~\eqref{eq: LRTC Q}, we provide default settings and more concrete implementation details for Algorithms~\ref{alg: RGD-desing}--\ref{alg: RTR-desing}.

First, We introduce additional implementation details for Algorithms~\ref{alg: RGD-desing} (RGD-desing) and~\ref{alg: RCG-desing} (RCG-desing). We adopt the modified Hestenes--Stiefel rule~\cite{hestenes1952methods} 
\[\beta^{(t)}=\max\left\{0,
\frac{
    \langle\grad f(x^{(t)})-\tensT_{t\gets t-1}\grad f(x^{(t-1)}),
    \grad f(x^{(t)})\rangle_{x^{(t)}}}
    {\langle\grad f(x^{(t)})-\tensT_{t\gets t-1}\grad f(x^{(t-1)}),
    \tensT_{t\gets t-1}\eta^{(t-1)}\rangle_{x^{(t)}}}\right\}\]
for the conjugate parameter $\beta^{(t)}$ in Algorithm~\ref{alg: RCG-desing}, and the projection operator $\proj_{\tangent_{x^{(t)}}\!\tensM}$ for vector transport $\tensT_{t\gets t-1}$. Note that the inner product can be efficiently computed by~\eqref{eq: inner product desing}. For the selection of stepsizes in Algorithms~\ref{alg: RGD-desing} and~\ref{alg: RCG-desing}, similar to the optimization on fixed-rank manifold of Tucker tensors~\cite{kressner2014low} and on Tucker tensor varieties~\cite{gao2023low}, we implement exact line search on tangent space by calculating the stepsize
\begin{equation*}
    \begin{aligned}
        s^{(t)}&=\argmin_{s\geq 0}\|\proj_\Omega(\varphi(x^{(t)}+s\eta^{(t)}))-\proj_\Omega\tensA\|_\frob^2\\
        &=\argmin_{s\geq 0}\|\proj_\Omega(\tensX^{(t)}+s\dot{\tensX}^{(t)})-\proj_\Omega\tensA\|_\frob^2\\
        &=\frac{\langle\proj_\Omega\dot{\tensX}^{(t)},\proj_\Omega(\tensA-\tensX^{(t)})\rangle}{\langle\proj_\Omega\dot{\tensX}^{(t)},\proj_\Omega\dot{\tensX}^{(t)}\rangle}.
    \end{aligned}
\end{equation*}
The computation of $\proj_\Omega\dot{\tensX}^{(t)}$ is implemented in a \texttt{MEX} function.

Second, we introduce the implementation details for RTR-desing in Algorithm~\ref{alg: RTR-desing}. For computing the parameter $\hat{\matu}_k$ in~\eqref{eq: Riemannian Hessian}, we observe that 
\[(\nabla f(\tensX))_{(k)}\!\dot{\matv}_k=\sum_{j\neq k}(\nabla f(\tensX)\times_j\dot{\matu}_j\times_{\ell\neq j,\ell\neq k}\matu_\ell)_{(k)},\]
where $\tensX=\varphi(x)$. Since $\nabla f(\tensX)=\proj_\Omega(\tensX)-\proj_\Omega(\tensA)$ is sparse, all terms $\nabla f(\tensX)\times_j\dot{\matu}_j\times_{\ell\neq j,\ell\neq k}\matu_\ell$ can be efficiently computed. We choose $\rho^\prime=0.1$, $\bar{\Delta}=\sqrt{\dim(\tensM)}$, $\delta^{(0)}=\bar{\Delta}/16$.

The parameters $\tensG^{(0)}$ and $\matu_1^{(0)},\matu_2^{(0)},\dots,\matu_d^{(0)}$ of initial guess of all methods can be generated in a same fashion as section~\ref{subsec: diff params}. For RGD-desing and RCG-desing, we set the initial guess $x^{(0)}=(\tensG^{(0)}\times_{k=1}^d\matu_k^{(0)},\matI_{n_1}-\matu_1^{(0)}(\matu_1^{(0)})^\top,\dots,\matI_{n_d}-\matu_d^{(0)}(\matu_d^{(0)})^\top)$. For other Tucker-based methods, we set the initial guess $\tensX^{(0)}=\tensG^{(0)}\times_{k=1}^d\matu_k^{(0)}$. Note that we never explicitly form $x^{(0)}$ and $\tensX^{(0)}$ with prohibitively large number of parameters.

The performance of all methods is evaluated by the training and test errors 
\[\varepsilon_{\Omega}(\tensX):=\frac{\|\proj_\Omega(\tensX)-\proj_\Omega(\tensA)\|_\frob}{\|\proj_\Omega(\tensA)\|_\frob}\quad\text{and}\quad\varepsilon_{\Gamma}(\tensX):=\frac{\|\proj_\Gamma(\tensX)-\proj_\Gamma(\tensA)\|_\frob}{\|\proj_\Gamma(\tensA)\|_\frob},\]
where $\Gamma$ is a test set different from the training set $\Omega$. For RGD-desing and RCG-desing, the training and test errors are evaluated by $\varepsilon_{\Omega}(\varphi(x))$ and $\varepsilon_{\Gamma}(\varphi(x))$ respectively. We terminate a method if: 1) the training error $\varepsilon_{\Omega}(\tensX^{(t)})<10^{-12}$; 2) the relative change of the training error $(\varepsilon_{\Omega}(\tensX^{(t)})-\varepsilon_{\Omega}(\tensX^{(t-1)}))/\varepsilon_{\Omega}(\tensX^{(t-1)})<10^{-12}$; 3) maximum iteration number is reached; 4) time budget is exceeded.

\paragraph{Test with unbiased rank parameter}
We examine the performance of Tucker-based methods with unbiased rank parameter, i.e., $\vecr=\vecr^*=(6,6,6)$. Figure~\ref{fig: unbiased rank} reports the test errors of Tucker-based methods with sampling rate $p=0.005,0.01,0.05$. We observe that proposed RGD-desing and RCG-desing are comparable to other candidates.

\begin{figure}[htbp]
    \centering
    \includegraphics[width=\textwidth]{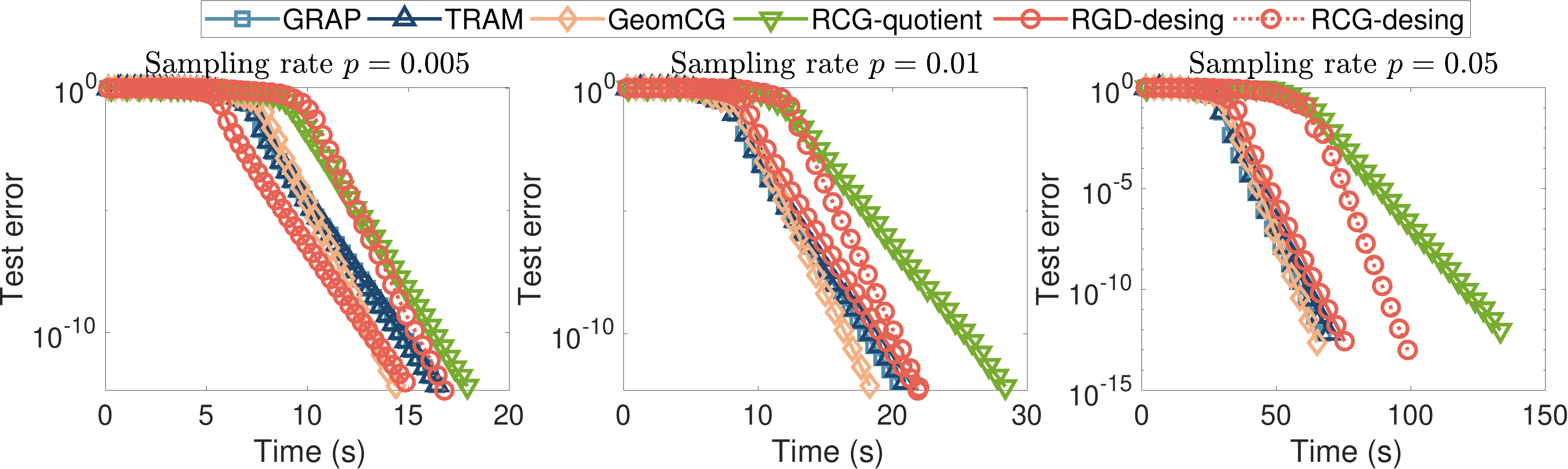}
    \caption{Test on different methods with unbiased rank parameter and sampling rate $p=0.005,0.01,0.05$}
    \label{fig: unbiased rank}
\end{figure}

\paragraph{Test with over-estimated rank parameters}
We examine the performance of all methods with over-estimated rank parameters $\vecr=(7,7,7),(8,8,8),\dots,(12,12,12)>(6,6,6)$. The test errors are reported in Fig.~\ref{fig: biased rank} (left) and average computation time per iteration for all methods is reported in Fig.~\ref{fig: biased rank} (right). We observe that: 1) due to the over-estimated rank parameters, only the proposed RGD-desing, RCG-desing methods and TRAM method successfully recover the low-rank tensor $\tensA$ under all selections of rank parameters; 2) the proposed RGD-desing method is comparable to TRAM, the rank-adaptive method; 3) computational cost per iteration of RGD-desing, RCG-desing methods are comparable to other candidates since we only store parameters.

\begin{figure}[htbp]
    \centering
    \includegraphics[width=\textwidth]{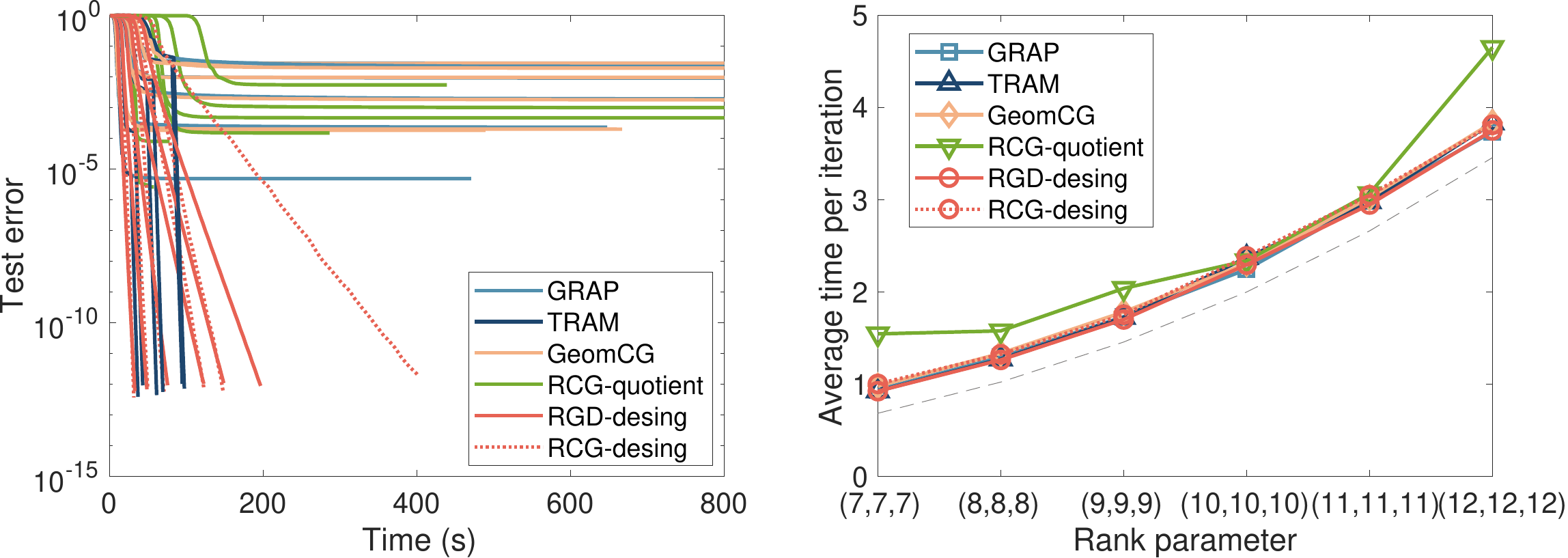}
    \caption{Numerical results on different methods with over-estimated rank parameters $\vecr=(7,7,7),(8,8,8),\dots,(12,12,12)$. Left: test errors. Right: average computation time per iteration. Gray dashed line: $\mathcal{O}(r^3)$}
    \label{fig: biased rank}
\end{figure}

\subsection{Test on movie ratings}
We consider tensor completion on a real-world dataset {``MovieLens 1M"\footnote{Available at \url{https://grouplens.org/datasets/movielens/1m/}.}}, which consists of $1000209$ movie ratings from 6040 users on 3952 movies from September 19th, 1997 to April 22nd, 1998. By choosing one week as a period, we formulate the movie ratings as a third-order tensor $\tensA\in\mathbb{R}^{6040\times 3952\times 150}$. We randomly select $8\times 10^5$ of the known ratings as a training set $\Omega$ and the rest ratings are test set $\Gamma$ to evaluate the performance of a method. The rank parameter is set to be $\vecr=(r,r,r)$ with $r=1,2,\dots,16$. Figure~\ref{fig: ML1M} (left) shows that the proposed RGD-desing method is comparable to GRAP method. We observe from Fig.~\ref{fig: ML1M} (right) that the test errors of TRAM, RGD-desing and GRAP methods are comparable.

\begin{figure}[htbp]
    \centering
    \includegraphics[width=\textwidth]{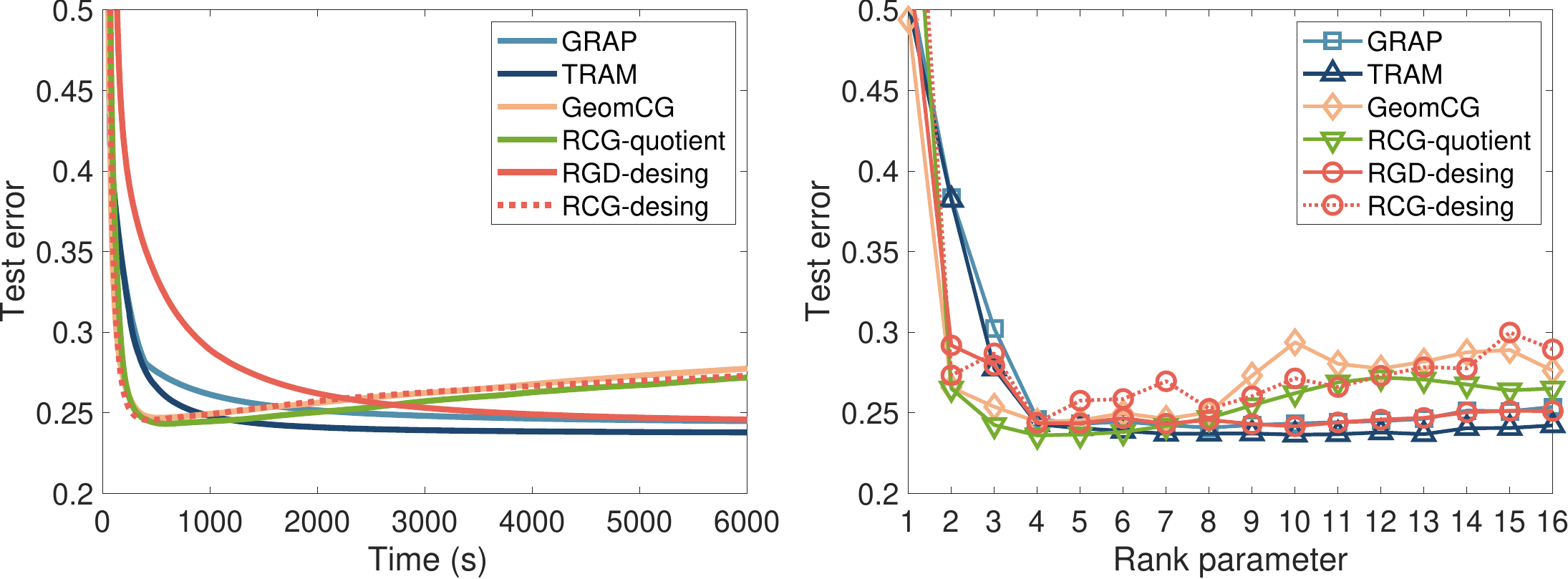}
    \caption{Numerical results on movie ratings. Left: test error with $\vecr=(12,12,12)$. Right: test error with rank parameters $(1,1,1),(2,2,2),\dots,(16,16,16)$}
    \label{fig: ML1M}
\end{figure}

\section{Conclusion and perspectives}\label{sec: conclusion}
In this paper, we have proposed a desingularization approach to minimize a smooth function on tensor varieties, the set of bounded-rank tensors. For Tucker decomposition, we have proposed a new parametrization of Tucker tensor varieties called desingularization by introducing slack variables. We observe that the geometry of desingularization of Tucker tensor varieties is closely connected to matrix and Tucker tensor varieties, but much more intricate than desingularization of the matrix varieties. For tensor train decomposition, since each core tensor in the TT decomposition is interconnected, we proposed a new parametrization by introducing a modified set of slack variables. The proposed parametrizations provide closed and smooth manifolds while preserving the structure of low-rank tensors, and extend the scope of geometries of low-rank tensor sets. Consequently, apart from optimization, these parametrizations may have independent interest in other applications involving low-rank tensors.

In contrast with the Tucker tensor varieties, where the projection onto the tangent cone does not have a closed-form expression~\cite{gao2023low}, the desingularization provides a smooth manifold structure, enabling explicit projection onto tangent spaces even at rank-deficient points. This structural advantage facilitates the application of Riemannian optimization techniques. 

Additionally, we have explored the relationship between the original optimization problem on the Tucker tensor varieties and its parametrized counterpart via desingularization. We have observed that even though a point $x\in\tensM$ is second-order stationary of $f\circ\varphi$, $\varphi(x)$ can be non-stationary of $f$ if $\varphi(x)$ is rank-deficient in all modes. The results extend the scope of bounded-rank matrices to bounded-rank tensors and give a negative answer that the desirable convergence properties in matrix case~\cite{rebjock2024optimization} no longer hold for tensors. A promising direction for future research is to develop methods that are capable of finding stationary points directly on the Tucker tensor varieties.

In practice, we have carefully considered the implementation details for Riemannian optimization via desingularization, avoiding formulation of large-scale matrices and tensors. Numerical experiments on tensor completion suggest that the proposed methods are favorably comparable to existing methods. Notably, the performance of proposed methods are not sensitive to the rank selections---an outcome which is previously achievable only through rank-adaptive strategies.

\section*{Declaration}
The authors declare that the data supporting the findings of this study are available within the paper. The authors have no competing interests to declare that are relevant to the content of this article.

\appendix
\section{Proof of Lemma~\ref{lem: stationary along group action}}\label{app: proof of lemma 4.2}
\begin{proof}
    Denote $\tilde{x}=(\tilde{\tensG},\tilde{\matu}_1,\tilde{\matu}_2,\dots,\tilde{\matu}_d)$, a vector $\dot{x}=(\dot{\tensG},\dot{\matu}_1,\dot{\matu}_2,\dots,\dot{\matu}_d)\in\tangent_{\tilde{x}}\!\tensM^\mathrm{Tucker}\simeq\tensM^\mathrm{Tucker}$, $\tensX=\tilde{\varphi}(\tilde{x})$ and 
    \[\dot{\tensX}=\mathrm{D}\tilde{\varphi}(\tilde{x})[\dot{x}]=\dot{\tensG}\times_{k=1}^d\tilde{\matu}_k+\sum_{k=1}^d\tilde{\tensG}\times_k\dot{\matu}_k\times_{j\neq k}\tilde{\matu}_j.\]
    We calculate the first- and second-order derivatives of $\tilde{g}$ as follows
        \begin{align}
            \partial_{\tilde{\tensG}}\tilde{g}(\tilde{x})&=\nabla f(\tensX)\times_{k=1}^d\tilde{\matu}_k^\top,\label{eq: partial G}\\
            \partial_{\tilde{\matu}_k}\tilde{g}(\tilde{x})&=(\nabla f(\tensX))_{(k)}\tilde{\matv}_k\tilde{\matG}_{(k)}^\top,\label{eq: partial uk}\\
            \mathrm{D}\partial_{\tilde{\matG}}\tilde{g}(\tilde{x})[\dot{x}]&=\nabla^2 f(\tensX)[\dot{\tensX}]\times_{k=1}^d\tilde{\matu}_k^\top+\sum_{k=1}^{d}\nabla f(\tensX)\times_k\dot{\matu}_k^\top\times_{j\neq k}\tilde{\matu}_j^\top,\nonumber\\
            \mathrm{D}\partial_{\tilde{\matu}_k}\tilde{g}(\tilde{x})[\dot{x}]&=(\nabla^2 f(\tensX)[\dot{\tensX}])_{(k)}\tilde{\matv}_k\tilde{\matG}_{(k)}^\top+(\nabla f(\tensX))_{(k)}(\dot{\matv}_k\tilde{\matG}_{(k)}^\top+\tilde{\matv}_k\dot{\matG}_{(k)}^\top)\nonumber\\
            &=(\nabla^2 f(\tensX)[\dot{\tensX}])_{(k)}\tilde{\matv}_k\tilde{\matG}_{(k)}^\top+(\nabla f(\tensX))_{(k)}\tilde{\matv}_k\dot{\matG}_{(k)}^\top\nonumber\\
            &~~~~~~~~~~~~~~~~~~~~~~~~~
            +\sum_{j\neq k}(\nabla f(\tensX)\times_j\dot{\matu}_j^\top\times_{\ell\notin\{j,k\}}\matu_\ell^\top)_{(k)}\tilde{\matG}_{(k)}^\top,\nonumber
        \end{align}
    where $\tilde{\matv}_k=(\tilde{\matu}_j)^{\otimes j\neq k}$.    Given the point $\tilde{x}^\matR=(\tilde{\tensG}\times_{k=1}^d\matR_k^{-1},\tilde{\matu}_1\matR_1,\dots,\tilde{\matu}_d\matR_d)$, we observe that 
    \begin{equation*}
        \begin{aligned}
            \dot{\tensX}&=\dot{\tensG}\times_{k=1}^d\tilde{\matu}_k+\sum_{k=1}^d\tilde{\tensG}\times_k\dot{\matu}_k\times_{j\neq k}\tilde{\matu}_j\\
            &=(\dot{\tensG}\times_{k=1}^d\matR_k^{-1})\times_{k=1}^d(\tilde{\matu}_k\matR_k)+\sum_{k=1}^d(\tilde{\tensG}\times_{k=1}^d\matR_k^{-1})\times_k(\dot{\matu}_k\matR_k)\times_{j\neq k}(\tilde{\matu}_j\matR_j)\\
            &=\mathrm{D}\tilde{\varphi}(\tilde{x}^\matR)[\dot{x}^\matR],
        \end{aligned}
    \end{equation*}
    where $\dot{x}^\matR=(\dot{\tensG}\times_{k=1}^d\matR_k^{-1},\dot{\matu}_1\matR_1,\dots,\dot{\matu}_d\matR_d)$. 
    Since $\tensX=(\tilde{\tensG}\times_{k=1}^d\matR_k^{-1})\times_{k=1}^d(\tilde{\matu}_k\matR_k)$, we can verify the following equalities 
        \begin{align}
            \partial_{\tilde{\tensG}}\tilde{g}(\tilde{x}^\matR)&=\partial_{\tilde{\tensG}}\tilde{g}(\tilde{x})\times_{k=1}^d\matR_k^\top,\label{eq: partial G under group}\\
            \partial_{\tilde{\matu}_k}\tilde{g}(\tilde{x}^\matR)&=\partial_{\tilde{\matu}_k}\tilde{g}(\tilde{x})\matR_k,\label{eq: partial uk under group}\\
            \mathrm{D}\partial_{\tilde{\tensG}}\tilde{g}(\tilde{x}^\matR)[\dot{x}^\matR]&=\mathrm{D}\partial_{\tilde{\tensG}}\tilde{g}(\tilde{x})[\dot{x}]\times_{k=1}^d\matR_k^{\top}.\nonumber
        \end{align}
    For $\mathrm{D}\partial_{\tilde{\matu}_k}\tilde{g}(\tilde{x}^\matR)[\dot{x}^\matR]$, we have 
    \begin{equation*}
        \begin{aligned}
            &~~~~\mathrm{D}\partial_{\tilde{\matu}_k}\tilde{g}(\tilde{x}^\matR)[\dot{x}^\matR]\\
            &=(\nabla^2 f(\tensX)[\dot{\tensX}])_{(k)}\tilde{\matv}_k\tilde{\matG}_{(k)}^\top\matR_k^{-\top}+(\nabla f(\tensX))_{(k)}\tilde{\matv}_k\dot{\matG}_{(k)}^\top\matR_k^{-\top}\\
            &
            ~~~~+\sum_{j\neq k}(\nabla f(\tensX)\times_j(\dot{\matu}_j\matR_j)^\top\times_{\ell\notin\{j,k\}}(\matu_\ell\matR_\ell)^\top)_{(k)}(\matR_j^{-1})^{\otimes j\neq k}\tilde{\matG}_{(k)}^\top\matR_k^{-\top}.\\
            &=\mathrm{D}\partial_{\tilde{\matu}_k}\tilde{g}(\tilde{x})[\dot{x}]\matR_k^{-\top}.
        \end{aligned}
    \end{equation*}
    Therefore, it holds that
        \begin{align}
            \langle\dot{x},\nabla^2 \tilde{g}(\tilde{x})[\dot{x}]\rangle&=\langle\dot{\tensG},\mathrm{D}\partial_{\tilde{\tensG}}\tilde{g}(\tilde{x})[\dot{x}]\rangle+\sum_{k=1}^d\langle\dot{\matu}_k,\mathrm{D}\partial_{\tilde{\matu}_k}\tilde{g}(\tilde{x})[\dot{x}]\rangle\nonumber\\
            &=\langle\dot{\tensG}\times_{k=1}^d\matR_k^{-1},\mathrm{D}\partial_{\tilde{\tensG}}\tilde{g}(\tilde{x}^\matR)[\dot{x}^\matR]\rangle+\sum_{k=1}^d\langle\dot{\matu}_k\matR_k,\mathrm{D}\partial_{\tilde{\matu}_k}\tilde{g}(\tilde{x}^\matR)[\dot{x}^\matR]\rangle\nonumber\\
            &=\langle\dot{x}^\matR,\nabla^2 \tilde{g}(\tilde{x}^\matR)[\dot{x}^\matR]\rangle.\label{eq: equal inner product}
        \end{align}
    Since $\matR_k$ is invertible, we have $\nabla \tilde{g}(\tilde{x}^\matR)=0$ if and only if $\nabla\tilde{g}(\tilde{x})=0$, and $\nabla^2 \tilde{g}(\tilde{x})$ is positive semi-definite if and only if $\nabla^2 \tilde{g}(\tilde{x}^\matR)$ is positive semi-definite. Hence, the claim is ready. \hfill\squareforqed
\end{proof}

\printbibliography
% %\begin{acknowledgements}
% %If you'd like to thank anyone, place your comments here
% %and remove the percent signs.
% %\end{acknowledgements}

% % BibTeX users please use one of
% % \bibliographystyle{spbasic}      % basic style, author-year citations
% % \bibliographystyle{spmpsci}      % mathematics and physical sciences
% %\bibliographystyle{spphys}       % APS-like style for physics
% % \bibliography{sn-bibliography}   % name your BibTeX data base

% % Non-BibTeX users please use
% % \begin{thebibliography}{}
% %
% % and use \bibitem to create references. Consult the Instructions
% % for authors for reference list style.
% %
% % \bibitem{RefJ}
% % % Format for Journal Reference
% % Author, Article title, Journal, Volume, page numbers (year)
% % % Format for books
% % \bibitem{RefB}
% % Author, Book title, page numbers. Publisher, place (year)
% % % etc
% % \end{thebibliography}

\end{document}